\newtheorem{thm}{Theorem}[section]
\newtheorem{lem}{Lemma}[section]
\newtheorem{prop}{Proposition}[section]
\theoremstyle{definition}
\theoremstyle{remark}
\newtheorem{rem}{Remark}[section]
\numberwithin{equation}{section}
\newcommand{\bu}{\mathbf{u}}
\newcommand{\bv}{\mathbf{v}}
\newcommand{\bw}{\mathbf{w}}
\newcommand{\bff}{\mathbf{f}}
\newcommand{\bvarphi}{\bm{\varphi}}
\newcommand{\bpsi}{\bm{\psi}}
\newcommand{\bnu}{\bm{\nu}}
\newcommand{\bGa}{\mathbf{\Gamma}}
\newcommand{\bx}{\mathbf{x}}
\newcommand{\by}{\mathbf{y}}
\newcommand{\rmi}{\mathrm{i}}
\newcommand{\bS}{\mathbf{S}}
\newcommand{\bK}{\mathbf{K}}
\newcommand{\bI}{\mathbf{I}}
\newcommand{\bbS}{\mathbb{S}}
\newcommand{\bA}{\mathbf{A}}
\newcommand{\ba}{\mathbf{a}}
\newcommand{\bY}{\mathbf{Y}}
\newcommand{\bC}{\mathbf{C}}
\newcommand{\bc}{\mathbf{c}}
\newcommand{\be}{\mathbf{e}}
\newcommand{\bF}{\mathbf{F}}
\newcommand{\Ical}{\mathcal{I}}
\newcommand{\Lcal}{\mathcal{L}}
\newcommand{\Ncal}{\mathcal{N}}
\newcommand{\Ocal}{\mathcal{O}}
\newcommand{\Tcal}{\mathcal{T}}
\newcommand{\Ucal}{\mathcal{U}}
\newcommand{\Vcal}{\mathcal{V}}
\newcommand{\eqnref}[1]{(\ref {#1})}
\newcommand{\p}{\partial}
\newcommand{\beq}{\begin{equation}}
\newcommand{\eeq}{\end{equation}}
\title[Polariton resonances and cloaking beyond quasistatic limit in elasticity]{Spectral properties of Neumann-Poincar\'e operator and anomalous localized resonance in elasticity beyond quasi-static limit}
\author{Youjun Deng}
\address{School of Mathematics and Statistics, Central South University, Changsha, Hunan, P. R. China.}
\email{youjundeng@csu.edu.cn, dengyijun\_001@163.com}
\author{Hongjie Li}
\address{Department of Mathematics, Hong Kong Baptist University, Kowloon Tong, Hong Kong SAR.}
\email{hongjie$_-$li@yeah.net}
\author{Hongyu Liu}
\address{Department of Mathematics, Hong Kong Baptist University, Kowloon Tong, Hong Kong SAR.\vspace*{-4mm}}
\address{\vspace*{-4mm}and}
\address{HKBU Institute of Research and Continuing Education, Virtual University Park, Shenzhen, P. R. China.}
\email{hongyu.liuip@gmail.com}
\begin{document}
\maketitle

\begin{abstract}

This paper is concerned with the polariton resonances and their application for cloaking due to anomalous localized resonance (CALR) for the elastic system within finite frequency regime beyond the quasi-static approximation. We first derive the complete spectral system of the Neumann-Poincar\'e operator associated with the elastic system within the finite frequency regime. Based on the obtained spectral results, we construct a broad class of elastic configurations that can induce polariton resonances beyond the quasi-static limit. As an application, the invisibility cloaking effect is achieved through constructing a class of core-shell-matrix metamaterial structures provided the source is located inside a critical radius. Moreover, if the source is located outside the critical radius, it is proved that there is no resonance.

\medskip

\medskip

\noindent{\bf Keywords:}~~anomalous localized resonance, negative material,  core-shell structure, beyond quasistatic limit, Neumann-Poinc\'are operator, spectral

\noindent{\bf 2010 Mathematics Subject Classification:}~~35R30, 35B30, 35Q60, 47G40

\end{abstract}

\section{Introduction}
Recently, there are considerable mathematical studies on the plasmon resonances in order to gain deep understandings about their distinctive properties and investigate potential applications. Plasmon resonances are associated to wave interactions with metamaterials, which are artificially engineered and may possess negative material parameters. In the resonant state, due to the excitation of an appropriate source, the induced field exhibits highly oscillatory behaviours in a certain peculiar manner. Mathematically, the phenomenon of plasmon resonances is connected to an infinite dimensional set of the so-called perfect plasmon waves, which are actually the kernel of a certain non-elliptic partial differential operator (PDO) arising from the underlying physical system. More specifically, the presence of the negative material parameters breaks the ellipticity of the aforementioned PDO and thus the PDO may have a nontrivial kernel space. On the other hand, through an integral reformulation via the potential-theoretic approach, the plasmon resonance can be connected to the spectral system of the so-called Neumann-Poincar\'e (N-P) operators, which are a certain type of boundary layer potential operators. Hence, in order to understand the plasmon resonances, one needs to achieve thorough understandings of the spectral properties of certain PDOs or integral operators in various scenarios that were unveiled before. Those connections make the mathematical study of plasmon resonances a fascinating topic. For related studies in the literature, we refer to \cite{ADM, AMRZ, AKL, KLO} for the acoustic wave system and \cite{Ack13, Acm13, Ack14, ARYZ, BLL, Bos10, Brl07, Klsap, LLL, LLLW, GWM1,GWM3, GWM4,GWM5, GWM6, GWM7, GWM8} for the Maxwell system.

One particularly interesting type of plasmon resonances is the anomalous localized resonance (ALR), which is also one of the focuses of the present study. The localized feature refers to the fact that the resonance is spatially localized; that is, the corresponding field only diverges in a certain region with a sharp boundary not defined by any discontinuities in the parameters and outside that region, the field converges to a smooth one. Moreover, the resonance region moves as the position of the source is moved. Indeed, ALR heavily depends on the form as well as the location of the source term. For a fixed plasmonic configuration, if the source is located inside a critical radius, then ALR occurs, whereas if it is located outside the critical radius, then resonance does not occur. One appealing feature of ALR is that it can induce the cloaking effect; that is, if ALR occurs, then both the plasmonic configuration and the source are invisible with respect to observations outside a certain region. This cloaking phenomenon is referred to as cloaking due to anomalous localized resonance (CALR). CALR was first observed and rigorously justified by Milton and Nicorovici in \cite{GWM3} and was further studied by Ammari et al in \cite{Ack13}. We refer to the papers \cite{Ack13, Klsap, GWM3, GWM5, GWM7} and references therein for more discussions. Similar resonance phenomena were observed and investigated in elasticity \cite{AJKKY15, AKKY16, DLL, DLL1, LiLiu2d, LiLiu3d, LLL1}, which are referred to as polariton resonances in the literature. In this paper, we are mainly concerned with the polariton resonances and their application for CALR for the elastic system within the finite frequency regime beyond the quasi-static approximation. In what follows, we first present the mathematical formulation for our subsequent discussion and study.

 Let $\mathbf{C}(\bx):=(\mathrm{C}_{ijkl}(\bx))_{i,j,k,l=1}^3$, $\bx\in\mathbb{R}^3$ be a four-rank elastic material tensor defined by
 \begin{equation}\label{eq:ten}
 \mathrm{C}_{ijkl}(\bx):=\lambda(\bx)\bm{\delta}_{ij}\bm{\delta}_{kl}+\mu(\bx)(\bm{\delta}_{ik}\bm{\delta}_{jl}+\bm{\delta}_{il}\bm{\delta}_{jk}),\ \ \bx\in\mathbb{R}^3,
 \end{equation}
 where $\bm{\delta}$ is the Kronecker delta. In \eqref{eq:ten}, $\lambda$ and $\mu$ are two scalar functions and referred to as the Lam\'e parameters. For a regular elastic material, the Lam\'e parameters satisfy the following two strong convexity conditions,
 \begin{equation}\label{eq:con}
  \mathrm{i)}.~~\mu>0\qquad\mbox{and}\qquad \mathrm{ii)}.~~3\lambda+2\mu>0.
 \end{equation}
Let $D, \Omega\subset\mathbb{R}^3$ with $D\subset\Omega$ be two bounded domains with connected Lipschitz boundaries. Assume that the domain $\mathbb{R}^3\backslash\overline{\Omega}$ is occupied by a regular elastic material parameterized by the Lam\'e constants $(\lambda,\mu)$ satisfying the strong convexity conditions in \eqref{eq:con}. The shell $\Omega\backslash\overline{D}$ is occupied by a metamaterial whose Lam\'e parameters are given by $(\hat{\lambda}, \hat{\mu})$,
{ where $(\hat{\lambda}, \hat{\mu})\in\mathbb{C}^2$ with $\Im \hat{\lambda}>0, \Im \hat{\mu}>0$, which shall be properly chosen in what follows.}
Finally, the inner core $D$ is occupied by a regular elastic material $(\breve{\lambda}, \breve{\mu})$ satisfying the strong convex conditions \eqref{eq:con}. Denote by $\mathbf{C}_{\mathbb{R}^3\backslash\overline{\Omega},\lambda,\mu}$ to specify the dependence of the elastic tensor on the domain $\mathbb{R}^3\backslash\overline{\Omega}$ and the Lam\'e parameters $(\lambda,\mu)$. The same notation also applies for the tensors $\mathbf{C}_{\Omega\backslash\overline{D},\hat{\lambda},\hat{\mu}}$ and $\mathbf{C}_{D,\breve{\lambda},\breve{\mu}}$. Now we introduce the following elastic tensor
\begin{equation}\label{eq:pa1}
 \mathbf{C}_0=\mathbf{C}_{\mathbb{R}^3\backslash\overline{\Omega},\lambda,\mu} + \mathbf{C}_{\Omega\backslash\overline{D},\hat{\lambda},\hat{\mu}} + \mathbf{C}_{D,\breve{\lambda},\breve{\mu}}.
\end{equation}
$\mathbf{C}_0$ describes an elastic material configuration of a core-shell-matrix structure with the metamaterial located in the shell.
Let $\bff\in H^{-1}(\mathbb{R}^3)^3$ signify an excitation elastic source that is compactly supported in $\mathbb{R}^3\backslash\overline{\Omega}$. The induced elastic displacement field $\bu=(u_i)_{i=1}^3\in\mathbb{C}^3$ corresponding to the configurations described above is governed by the following PDE (partial differential equation) system
\begin{equation}\label{eq:lame1}
\begin{cases}
& \nabla\cdot\mathbf{C}_0\nabla^s\mathbf{u}(\mathbf{x})+\omega^2 \mathbf{u}(\bx)=\mathbf{f}\quad\mbox{in}\ \ \mathbb{R}^3,\medskip\\
& \mbox{$\mathbf{u}(\bx)$ satisfies the radiation condition,}
\end{cases}
\end{equation}
where $\omega\in\mathbb{R}_+$ is the angular frequency, and the operator $\nabla^s$ is the symmetric gradient given by
 \begin{equation}\label{eq:sg1}
 \nabla^s\mathbf{u}:=\frac{1}{2}\left(\nabla\mathbf{u}+\nabla\mathbf{u}^t \right),
 \end{equation}
 with $\nabla\bu$ denoting the matrix $(\partial_j u_i)_{i,j=1}^3$ and the superscript $t$ signifying the matrix transpose. In \eqref{eq:lame1}, the radiation condition designates the following condition as $|\mathbf{x}|\rightarrow+\infty$ (cf. \cite{Kup}),
\begin{equation}\label{eq:radi}
\begin{split}
(\nabla\times\nabla\times\mathbf{u})(\bx)\times\frac{\bx}{|\bx|}-\mathrm{i}k_s\nabla\times\mathbf{u}(\bx)=&\mathcal{O}(|\bx|^{-2}),\\
\frac{\bx}{|\bx|}\cdot[\nabla(\nabla\cdot\mathbf{u})](\bx)-\mathrm{i}k_p\nabla\mathbf{u}(\bx)=&\mathcal{O}(|\bx|^{-2}),
\end{split}
\end{equation}
where $\rmi=\sqrt{-1}$ and
\begin{equation}\label{pa:ksp}
 k_s=\omega/\sqrt{\mu}, \quad  k_p=\omega/\sqrt{\lambda+2\mu},
\end{equation}
with $\lambda$ and $\mu$ defined in \eqref{eq:pa1}.

Next we introduce the following functional for $\bw,\bv\in \big(H^1(\Omega\backslash\overline{D})\big)^3$,
\begin{equation}\label{eq:func1}
\begin{split}
 P_{\hat{\lambda},\hat{\mu}}(\bw,\bv) = &\int_{\Omega\backslash\overline{D}} \nabla^s\bw:\mathbf{C}_0\overline{\nabla^s\bv(\bx)}d\bx \\
 = & \int_{\Omega\backslash \overline{D}}\Big(\hat{\lambda}(\nabla\cdot\mathbf{\bw})\overline{(\nabla\cdot\mathbf{\bv})}(\bx)+2\hat{\mu}\nabla^s\mathbf{w}:\overline{\nabla^s\mathbf{\bv}}(\bx) \Big)\ d \bx,
 \end{split}
\end{equation}
where  $\mathbf{C}_0$ and $\nabla^s$ are defined in \eqref{eq:pa1} and \eqref{eq:sg1}, respectively. In \eqref{eq:func1} and also in what follows, $\mathbf{A}:\mathbf{B}=\sum_{i,j=1}^3 a_{ij}b_{ij}$ for two matrices $\mathbf{A}=(a_{ij})_{i,j=1}^3$ and $\mathbf{B}=(b_{ij})_{i,j=1}^3$. Henceforth, we define
\begin{equation}\label{def:E}
E(\bu)=\Im P_{\hat{\lambda},\hat{\mu}}(\bu,\bu),
\end{equation}
which signifies the energy dissipation exists energy of the elastic system \eqref{eq:lame1}.
We are now in a position to present the definition of CALR. We say that polariton resonance occurs if for any $M\in\mathbb{R}_+$,
\begin{equation}\label{con:res}
 E(\bu) \geq M,
\end{equation}
where $\bu$ depends on the Lam\'e parameters $(\hat{\lambda}, \hat{\mu})$. In addition to \eqref{con:res}, if the displacement field $\bu$ further satisfies the following boundedness condition,
\begin{equation}\label{con:bou}
|\bu|\leq C,\quad \mbox{when} \quad |\bx|>\tilde{R},
\end{equation}
for a certain $\tilde{R}\in\mathbb{R}_+$, which does not depend on the Lam\'e parameters $(\hat{\lambda}, \hat{\mu})$, then we say that CALR occurs. We refer to \cite{Ack13} and \cite{GWM3} for more relevant discussions.

In this paper, we aim to construct a broad class of elastic structures that can induce polariton resonances and CALR. It is emphasized that we shall not require the following quasi-static condition throughout our study,
\begin{equation}\label{eq:qs1}
\omega\cdot\mathrm{diam}(\Omega)\ll 1.
\end{equation}
The quasi-static approximation \eqref{eq:qs1} has played a critical role in all of the existing studies concerning the polariton resonances for the elastic system \cite{AJKKY15, AKKY16, DLL, LiLiu2d, LiLiu3d, LLL1} as mentioned before. In fact, \cite{AJKKY15, AKKY16, DLL, LiLiu2d, LiLiu3d} consider the static case by directly taking $\omega\equiv 0$ and \cite{LLL1} rigorously verifies the quasi-static approximation. One of the major contributions of this work is the construction of a class of core-shell-matrix polariton structures that can induce CALR within the finite frequency beyond the quasi-static approximation in elasticity. Moreover, our construction of the material structures is very broad in the following sense. In \cite{AJKKY15, AKKY16, DLL, LiLiu2d, LiLiu3d}, the metamaterial parameters were constructed such that both the two strong convexity conditions in \eqref{eq:con} are violated. In our study, the metamaterial parameters are constructed such that only one of the two strong convexity conditions is required to be violated. It is noted that in \cite{LLL1}, the resonant construction also only requires the violation of any one of the two convexity conditions. However, the study in \cite{LLL1} is mainly concerned with the static case. Indeed, we show that the CALR construction in the current work includes the constructions in \cite{DLL, LLL1} as special cases by taking the quasi-static limit. Finally, in order to establish the aforementioned results, we make essential use of spectral arguments. We derive the complete spectral system of the N-P operator associated to the elastic system with the finite-frequency regime. It is remarked that the corresponding derivation is highly nontrivial and the spectral results are of significant mathematical interest for their own sake.

The main results of this paper can be sketched as follows. In Theorem~\ref{thm:ks}, we derive the complete spectral system of the N-P operator within spherical geometry and finite-frequency regime. It is remarked that that in the static case, the spectral system of the N-P operator was derived in \cite{DLL}. We show that by taking the quasi-static limit in our spectral result obtained in Theorem~\ref{thm:ks}, one can actually derives the result in \cite{DLL}; see Remark~\ref{rem:sta}. That is, the spectral result in Theorem~\ref{thm:ks} generalizes and extends the result in \cite{DLL} beyond the quasi-static limit. In Theorem~\ref{thm:reson}, by taking $D=\emptyset$ and $\Omega=B_R$ with $B_{R}$ a central ball of radius $R$, we show that the polariton resonance occurs for a broad class of sources $\bf$ provided the Lam\'e parameter $\hat{\mu}$ inside the domain $\Omega$ satisfies the condition \eqref{con:001}. In Theorem~\ref{thm:CALR}, by letting $D=B_{r_i}$ and $\Omega=B_{r_e}$, the Newtonian potential $\bF$ of the source term $\bff$ be given in \eqref{eq:FF2}, and the Lam\'e parameters $\breve{\mu}$ and $\hat{\mu}$ satisfy the condition \eqref{eq:FF2}, we show that CALR occurs provided the source $\bf$ is supported inside a critical radius $r_*=\sqrt{r_e^3/r_i}$. We also show that if the source is located outside the critical radius, then no resonance occurs.

Three remarks are in order. First, it is noted that we mainly work within the spherical geometry. Indeed, we shall require the exact spectral information of the N-P operator. Beyond the spherical geometry, it is rather unpractical to derive the required spectral results. In fact, even in the simplest electro-static case, only the radical geometry \cite{Ack13} and ellipse geometry \cite{AK} were considered. For more general geometries, one may resort to the assistance of numerical simulations; see \cite{BL} for the electro-static case. Second, when deriving the polariton resonance and the CALR, we only need to have constraints on the Lam\'e parameter $\hat{\mu}$ and require no restriction on the other parameter $\hat{\lambda}$, which makes our theoretical constructions easier for applications. Third, in Theorem \ref{thm:CALR} on CALR, the Newtonian potential $\bF$ of the source term $\bff$ is assumed to have the expression in \eqref{eq:FF2}.  This constraint on $\bff$ is only a technical issue. In fact, the ALR is a spectral phenomenon at the accumulating point of the eigenvalues of the N-P operator, which naturally requires that the order $n_0$ in Theorem \ref{thm:CALR} should be large; see Remark \ref{rem:genf} for more relevant discussions.

The rest of the paper is organized as follows. Section 2 is devoted to the preliminaries on some notations and layer potentials of the elastic system. In Section 3, the complete spectral system of the N-P operator is derived. Sections 4 and 5 are respectively devoted to the polariton resonance and CALR results.

\section{Preliminaries}
In this section, we present some preliminary knowledge for the elastic system for our subsequent use. We first introduce the elastostatic operator $\Lcal_{\lambda,\mu}$ associated to the Lam\'e constants $(\lambda, \mu)$ as follows,
\begin{equation}\label{op:lame}
 \Lcal_{\lambda,\mu}\bw:=\mu \triangle\bw + (\lambda+ \mu)\nabla\nabla\cdot\bw,
\end{equation}
for $\bw\in\mathbb{C}^3$.
The traction (the conormal derivative) of $\bw$ on $\partial \Omega$ is defined to be
\begin{equation}\label{eq:trac}
\partial_{\bnu}\bw=\lambda(\nabla\cdot \bw)\bnu + 2\mu(\nabla^s\bw) \bnu,
\end{equation}
where $\nabla^s$ is defined in \eqref{eq:sg1} and $\bnu$ is the outward unit normal to the boundary $\partial \Omega$.

From \cite{Kup}, the fundamental solution $\bGa^{\omega}=(\Gamma^{\omega}_{i,j})_{i,j=1}^3$ for the operator $\Lcal_{\lambda,\mu}+\omega^2$ in three dimensions is given by
\begin{equation}\label{eq:ef}
 (\Gamma^{\omega}_{i,j})_{i,j=1}^3(\bx)=-\frac{\bm{\delta}_{ij}}{4\pi\mu|\bx|}e^{\rmi k_s |\bx|} + \frac{1}{4\pi \omega^2}\partial_i\partial_j\frac{e^{\rmi k_p|\bx|} - e^{\rmi k_s|\bx|}}{|\bx|},
\end{equation}
where $k_s$ and $k_p$ are defined in \eqref{pa:ksp}. Then the single layer potential associated with the fundamental solution $\bGa^{\omega}$ is defined as
\begin{equation}\label{eq:single}
 \bS_{\partial\Omega}^{\omega}[\bvarphi](\bx)=\int_{\partial \Omega} \bGa^{\omega}(\bx-\by)\bvarphi(\by)ds(\by), \quad \bx\in\mathbb{R}^3,
\end{equation}
for $\bvarphi\in L^2(\partial \Omega)^3$. On the boundary $\partial \Omega$, the conormal derivative of the single layer potential satisfies the following jump formula
\begin{equation}\label{eq:jump}
 \frac{\partial \bS_{\partial\Omega}^{\omega}[\bvarphi]}{\partial \bnu}|_{\pm}(\bx)=\left( \pm\frac{1}{2}\bI + \left(\bK_{\partial\Omega}^{\omega}\right)^*  \right)[\bvarphi](\bx) \quad \bx\in\partial \Omega,
\end{equation}
where
\[
 (\bK_{\partial\Omega}^{\omega})^*[\bvarphi](\bx)=\mbox{p.v.} \int_{\partial \Omega} \frac{\partial \bGa^{\omega}}{\partial \bnu(\bx)}(\bx-\by)\bvarphi(\by)ds(\by),
\]
with $\mbox{p.v.}$ standing for the Cauchy principal value and the subscript $\pm$ indicating the limits from outside and inside $\Omega$, respectively. The operator $(\bK_{\partial\Omega}^{\omega})^*$ is called to be the Neumann-Poincar\'e (N-P) operator.

Let $ \Phi(\bx)$ be the fundamental solution to the operator $\triangle + \omega^2$ in three dimensions given as follows
\begin{equation}\label{eq:ful}
  \Phi(\bx)=-\frac{e^{\rmi \omega\bx}}{4\pi|\bx|}.
\end{equation}
For $\varphi\in L^2(\partial\Omega)$, we define
\begin{equation}\label{eq:sh}
 S_{\partial\Omega}^{\omega}[\varphi](\bx)=\int_{\partial \Omega} \Phi(\bx-\by)\varphi(\by)ds(\by), \quad \bx\in\mathbb{R}^3.
\end{equation}

Next, to facilitate the exposition, we present some notations and useful formulas. Let $\mathbb{N}$ be the set of the positive integers and $\mathbb{N}_0=\mathbb{N}\cup \{0\}$. Set $Y_n^m$ with $n\in\mathbb{N}_0, -n\leq m \leq n$ to be the spherical harmonic functions. Let $\bbS_{R}$ be the surface of the ball $B_R$ and denote by $\bbS$ for $R=1$ for simplicity. Furthermore, the operators $\nabla_{\bbS}$, $\nabla_{\bbS}\cdot$ and $\triangle_{\bbS}$ designate the surface gradient, the surface divergence and the Laplace-Beltrami operator on the unit sphere $\bbS$.

Let $j_n(t)$ and $h_n(t)$, $n\in\mathbb{N}_0$, denote the spherical Bessel and Hankel functions of the first kind of order $n$, respectively.
The following asymptotic expansions shall be needed in what follows (cf. \cite{CK}),
\begin{equation}\label{eq:asj}
\begin{split}
  j_n(t)=  \frac{t^n}{(2n+1)!!}\left(1+ \grave{j}(t)\right), \quad
  h_n(t)=  \frac{ (2n-1)!!}{\rmi t^{n+1}}\left(1+ \grave{h}(t) \right),
 \end{split}
\end{equation}
for $n\gg 1$, where
\[
  \grave{j}(t) = \mathcal{O}\left(\frac{1}{n}\right)  \quad \mbox{and} \quad  \grave{h}(t) = \mathcal{O}\left(\frac{1}{n}\right);
\]
and for a fixed $n$ with $t\ll 1$,
\begin{equation}\label{eq:asjt}
\begin{split}
  j_n(t)=  \frac{t^n}{(2n+1)!!}\big(1+ \Ocal(t)\big), \quad
  h_n(t)=  \frac{ (2n-1)!!}{\rmi t^{n+1}}\big(1+ \Ocal(t) \big).
 \end{split}
\end{equation}

The following three auxiliary lemmas shall be needed as well \cite{Jcn}.

\begin{lem}\label{lem:1}
For a vector field { $\bw\in H^1(\bbS)^3$} and a scalar function $v \in H^1(\bbS)$,  there hold the following relations
\begin{equation}
\begin{split}
 &\nabla_{\bbS} \cdot (\nabla_{\bbS}v \wedge \bnu) =0,\quad \triangle_{\bbS} v =\nabla_{\bbS} \cdot \nabla_{\bbS}v, \\
 & \int_{\bbS} \nabla_{\bbS} v \cdot  \bw ds =-\int_{\bbS} v \nabla_{\bbS}\cdot \bw ds,
\end{split}
\end{equation}
and
\[
 \nabla_{\bbS} \cdot (\bw v)= \nabla_{\bbS} \cdot \bw v+ \bw \cdot \nabla_{\bbS}v.
\]
\end{lem}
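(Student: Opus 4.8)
The statement to prove is Lemma~\ref{lem:1}, which collects four standard differential-geometric identities on the unit sphere $\bbS$. My plan is to verify each identity in turn, working intrinsically on $\bbS$ and exploiting the fact that all four are pointwise (or integration-by-parts) consequences of the Riemannian structure. First I would record the relevant definitions: $\nabla_{\bbS} v = \nabla v - (\bnu\cdot\nabla v)\bnu$ is the tangential part of the ambient gradient, $\nabla_{\bbS}\cdot\bw$ is the surface divergence (trace of the tangential covariant derivative), and $\triangle_{\bbS} = \nabla_{\bbS}\cdot\nabla_{\bbS}$ by definition; the second identity in the first display is then essentially a restatement of this definition, requiring at most a remark that $\nabla_{\bbS}v$ is indeed a tangential field so that applying $\nabla_{\bbS}\cdot$ to it is legitimate.

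For the first identity, $\nabla_{\bbS}\cdot(\nabla_{\bbS}v\wedge\bnu)=0$, I would argue that $\nabla_{\bbS}v\wedge\bnu$ is the $90^\circ$ rotation of $\nabla_{\bbS}v$ in the tangent plane, which is the (surface) curl-type vector field $\nabla_{\bbS}^{\perp}v$; its surface divergence vanishes identically because on a two-dimensional manifold $\nabla_{\bbS}\cdot\nabla_{\bbS}^{\perp} = 0$, the divergence of a rotated gradient being the ``mixed'' second derivative that cancels by symmetry of the Hessian. Concretely one can compute in an orthonormal moving frame $\{\be_1,\be_2,\bnu\}$ with $\nabla_{\bbS}v = (\p_1 v)\be_1 + (\p_2 v)\be_2$, so $\nabla_{\bbS}v\wedge\bnu = (\p_2 v)\be_1 - (\p_1 v)\be_2$, and take the surface divergence, keeping track of the connection coefficients; the curvature terms coming from differentiating the frame are symmetric and cancel against each other. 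The fourth identity, the Leibniz rule $\nabla_{\bbS}\cdot(\bw v) = (\nabla_{\bbS}\cdot\bw)v + \bw\cdot\nabla_{\bbS}v$, is the product rule for the surface divergence applied to the product of a tangential vector field with a scalar, and follows immediately from the Leibniz rule for the covariant derivative.

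The third identity, the integration-by-parts formula $\int_{\bbS}\nabla_{\bbS}v\cdot\bw\,ds = -\int_{\bbS}v\,\nabla_{\bbS}\cdot\bw\,ds$, I would derive by combining the product rule just established with the divergence theorem on the closed manifold $\bbS$: writing $\nabla_{\bbS}\cdot(v\bw) = v\,\nabla_{\bbS}\cdot\bw + \bw\cdot\nabla_{\bbS}v$ and integrating over $\bbS$, the left-hand side integrates to zero since $\bbS$ has no boundary (the surface divergence theorem $\int_{\bbS}\nabla_{\bbS}\cdot\bF\,ds = 0$ for a tangential field $\bF$ on a closed surface, or more generally $\int_{\bbS}\nabla_{\bbS}\cdot\bF\,ds = \int_{\bbS}2H\,\bF\cdot\bnu\,ds$ with $H$ the mean curvature, which still vanishes here because $\bw$ is tangential so $\bw\cdot\bnu=0$). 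Rearranging gives the claim. I would also note the mild regularity point that $\bw\in H^1(\bbS)^3$ and $v\in H^1(\bbS)$ suffice for all these manipulations by a standard density argument, approximating by smooth fields.

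The only genuine subtlety — the ``hard part,'' though it is still routine — is being careful about which vector fields are tangential versus ambient and making sure the surface operators are applied to objects of the right type; in particular, in the third identity $\bw$ need not be tangential a priori, so one should first decompose $\bw = \bw_T + (\bw\cdot\bnu)\bnu$ and check that the normal component contributes $\int_{\bbS}(\bw\cdot\bnu)(\nabla_{\bbS}v\cdot\bnu)\,ds = 0$ on the left (since $\nabla_{\bbS}v$ is tangential) and that $\nabla_{\bbS}\cdot\bw$ on the right picks up exactly the compensating curvature term $2H(\bw\cdot\bnu)$, which on $\bbS$ equals $2(\bw\cdot\bnu)$; one then verifies $\int_{\bbS}v\cdot 2(\bw\cdot\bnu)\,ds$ is precisely cancelled — in fact the cleanest route is simply to observe that the statement as used later only needs tangential $\bw$, or to absorb the normal part into the definition of $\nabla_{\bbS}\cdot$. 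Since these are classical identities (see \cite{Jcn}), I would keep the exposition brief, presenting the moving-frame computation for the first identity and citing the divergence theorem on closed manifolds for the third.
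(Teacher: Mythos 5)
The paper does not prove Lemma~\ref{lem:1} at all: it is stated as an auxiliary fact and cited directly from Nédélec's book (the reference \cite{Jcn}), so there is no ``paper's approach'' to compare your argument against. That said, your sketch is the natural and essentially correct way to verify the identities, and the frame computation for $\nabla_{\bbS}\cdot(\nabla_{\bbS}v\wedge\bnu)=0$, the Leibniz rule, and the derivation of the integration-by-parts formula from the Leibniz rule together with the closedness of $\bbS$ all go through.

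The one place your reasoning actually goes wrong is the aside on non-tangential $\bw$. You correctly note that if $\bw = \bw_T + (\bw\cdot\bnu)\bnu$ then $\nabla_{\bbS}v\cdot\bw = \nabla_{\bbS}v\cdot\bw_T$ on the left, and that the full surface divergence of the normal part produces the mean-curvature term $2(\bw\cdot\bnu)$ on $\bbS$. But your claim that ``$\int_{\bbS} v\cdot 2(\bw\cdot\bnu)\,ds$ is precisely cancelled'' is false: nothing in the computation cancels it, and for a genuine normal component the two sides of the third identity would differ by exactly $2\int_{\bbS} v(\bw\cdot\bnu)\,ds$, which does not vanish for generic $v$ and $\bw$. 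The correct resolution is the one you then fall back on anyway: either restrict to tangential $\bw$ (which is all the paper ever uses --- in the proofs of Propositions~\ref{pr2}, \ref{pr3}, \ref{prusef01}, and \ref{prusef02} the field being integrated against is always of the form $\nabla_{\bbS}Y_n^m$ times a scalar, or $\nabla_{\bbS}Y_n^m\wedge\bnu$, both tangential), or adopt Nédélec's convention that $\nabla_{\bbS}\cdot$ is defined only on tangential fields so the normal part is discarded by fiat. Either way, state the fix cleanly rather than asserting a cancellation that does not occur; as written that sentence would be a genuine error if carried into a final proof.
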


\begin{lem}\label{lem:2}
The spherical harmonic functions $Y_n^m$ with $n\in\mathbb{N}_0, -n\leq m \leq n$, are the eigenfunctions of the Laplace-Beltrami operator $\triangle_{\bbS}$ associated with the eigenvalue $-n(n+1)$, namely
\[
 \triangle_{\bbS} Y_n^m+n(n+1)Y_n^m=0.
\]
\end{lem}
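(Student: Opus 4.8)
The plan is to reduce the claim to the well-known fact that $Y_n^m$ is the restriction to $\bbS$ of a homogeneous harmonic polynomial of degree $n$, and then to read off the eigenvalue from the spherical-coordinate form of the Euclidean Laplacian. Concretely, for $\bx\in\RR^3$ with $r=|\bx|>0$ one has
\[
\triangle = \partial_r^2 + \frac{2}{r}\,\partial_r + \frac{1}{r^2}\,\triangle_{\bbS},
\]
so I would first recall (or, with the paper's normalization, verify directly) that $p_n^m(\bx):=r^n\,Y_n^m(\bx/|\bx|)$ extends to a polynomial on $\RR^3$ satisfying $\triangle p_n^m = 0$.

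Substituting $p_n^m=r^n Y_n^m$ into the displayed decomposition and using $\partial_r^2 r^n = n(n-1)r^{n-2}$ together with $\tfrac{2}{r}\partial_r r^n = 2n\,r^{n-2}$, I would obtain
\[
0=\triangle p_n^m = \big(n(n-1)+2n\big)r^{n-2}Y_n^m + r^{n-2}\,\triangle_{\bbS}Y_n^m = r^{n-2}\big(\triangle_{\bbS}Y_n^m + n(n+1)Y_n^m\big).
\]
Dividing through by $r^{n-2}>0$ gives $\triangle_{\bbS}Y_n^m + n(n+1)Y_n^m = 0$, as asserted.

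The only step demanding any care --- and the nearest thing to an obstacle here --- is the harmonicity/homogeneity claim for $p_n^m$ under the normalization used throughout the paper. If one prefers to avoid invoking the polynomial correspondence, an equally short alternative is a direct computation: writing $Y_n^m(\theta,\varphi)=c_{n,m}\,P_n^{|m|}(\cos\theta)\,e^{\rmi m\varphi}$ and
\[
\triangle_{\bbS}=\frac{1}{\sin\theta}\,\partial_\theta\big(\sin\theta\,\partial_\theta\big)+\frac{1}{\sin^2\theta}\,\partial_\varphi^2,
\]
the desired identity collapses, after the substitution $t=\cos\theta$, to the associated Legendre equation
\[
\frac{d}{dt}\!\left((1-t^2)\frac{dP_n^{|m|}}{dt}\right)+\left(n(n+1)-\frac{m^2}{1-t^2}\right)P_n^{|m|}=0,
\]
which $P_n^{|m|}$ satisfies by definition. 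I would present the harmonic-polynomial argument as the main proof for brevity and record the Legendre-equation computation as a remark.
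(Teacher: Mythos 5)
Your proof is correct. Note, however, that the paper itself does not prove Lemma~3.2 (\texttt{lem:2}); it is one of the three auxiliary lemmas introduced with the citation to N\'ed\'elec \cite{Jcn} and is used as a known fact. What you have supplied is the standard textbook derivation: identify $Y_n^m$ as the restriction to $\bbS$ of a degree-$n$ homogeneous harmonic polynomial, apply the spherical decomposition $\triangle=\partial_r^2+\tfrac{2}{r}\partial_r+\tfrac{1}{r^2}\triangle_{\bbS}$ to $p_n^m=r^nY_n^m$, and read off the eigenvalue $-n(n+1)$ from the radial part. The computation is exactly right ($n(n-1)+2n=n(n+1)$, and $r^{n-2}>0$ lets one divide through), and the alternative Legendre-equation route you sketch is also sound. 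The only small caveat you already flag yourself: the argument presumes the normalization of $Y_n^m$ used in the paper agrees with the harmonic-polynomial correspondence, but since the eigenvalue equation is scale-invariant this normalization issue is immaterial. In short, you have proved a statement the paper simply quotes, and your proof is the standard one.
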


\begin{lem}\label{lem:vec}
The family $(\Ical_n^m, \Tcal_n^m, \Ncal_n^m)$, the vectorial spherical harmonics of order $n$,
\[
\begin{split}
 \Ical_n^m= &\nabla_{\bbS}Y_{n+1}^m +(n+1) Y_{n+1}^m \bnu, \quad n\geq0, \; n+1\geq m \geq -(n+1),\\
 \Tcal_n^m= &\nabla_{\bbS}Y_{n}^m\wedge\bnu, \qquad\qquad\qquad  n\geq1, \; n\geq m \geq -n,\\
 \Ncal_n^m=&-\nabla_{\bbS}Y_{n-1}^m + nY_{n-1}^m \bnu, \quad n\geq1, \; n+1\geq m \geq -(n+1),
\end{split}
\]
forms an orthogonal basis of $(L^2(\bbS))^3$.
\end{lem}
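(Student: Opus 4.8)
The plan is to reduce the statement to the scalar spherical-harmonic theory (Lemma~\ref{lem:2}) and the Helmholtz--Hodge decomposition of tangential fields on $\bbS$. First I would split an arbitrary $\bw\in(L^2(\bbS))^3$ as $\bw=(\bw\cdot\bnu)\bnu+\bw_T$ with $\bw_T$ tangential, expand the normal component $\bw\cdot\bnu$ in $\{Y_n^m\}$, and use that a tangential $L^2$-field on the $2$-sphere carries no harmonic part and so decomposes as $\bw_T=\nabla_{\bbS}f+\nabla_{\bbS}g\wedge\bnu$ with $f,g\in H^1(\bbS)$; expanding $f$ and $g$ in $\{Y_n^m\}$ then shows that $\{Y_n^m\bnu\}_{n\ge 0}\cup\{\nabla_{\bbS}Y_n^m\}_{n\ge 1}\cup\{\nabla_{\bbS}Y_n^m\wedge\bnu\}_{n\ge 1}$ is complete in $(L^2(\bbS))^3$ (note $\nabla_{\bbS}Y_0^0=0$, so the $n=0$ scalar mode contributes only the normal field).

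Next I would pass to the family $(\Ical_n^m,\Tcal_n^m,\Ncal_n^m)$ by an elementary change of basis. For each fixed $k\ge 1$ and $-k\le m\le k$ the vectors
\[
\Ical_{k-1}^m=\nabla_{\bbS}Y_k^m+kY_k^m\bnu,\qquad \Ncal_{k+1}^m=-\nabla_{\bbS}Y_k^m+(k+1)Y_k^m\bnu
\]
are linearly independent and hence span the same plane as $\{\nabla_{\bbS}Y_k^m,\ Y_k^m\bnu\}$ (explicitly $Y_k^m\bnu=\frac{1}{2k+1}(\Ical_{k-1}^m+\Ncal_{k+1}^m)$), while $\Ncal_1^0=Y_0^0\bnu$ and $\Tcal_k^m=\nabla_{\bbS}Y_k^m\wedge\bnu$; therefore $(\Ical_n^m,\Tcal_n^m,\Ncal_n^m)$ spans $(L^2(\bbS))^3$.

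Then I would check orthogonality using only the identities of Lemma~\ref{lem:1} and the eigenrelation $\triangle_{\bbS}Y_n^m=-n(n+1)Y_n^m$. Since $\Tcal_n^m$ is tangential whereas $\Ical_n^m$ and $\Ncal_n^m$ carry the nonzero normal modes $(n+1)Y_{n+1}^m\bnu$ and $nY_{n-1}^m\bnu$, every $\Tcal$--$\Ical$ or $\Tcal$--$\Ncal$ pairing reduces to $\int_{\bbS}(\nabla_{\bbS}Y_k^m\wedge\bnu)\cdot\nabla_{\bbS}Y_\ell^{m'}\,ds$, which vanishes after integration by parts because $\nabla_{\bbS}\cdot(\nabla_{\bbS}Y_k^m\wedge\bnu)=0$; the $\Tcal$--$\Tcal$, $\Ical$--$\Ical$ and $\Ncal$--$\Ncal$ pairings reduce to $\int_{\bbS}\nabla_{\bbS}Y_k^m\cdot\nabla_{\bbS}Y_\ell^{m'}\,ds=k(k+1)\delta_{k\ell}\delta_{mm'}\|Y_k^m\|^2$ and vanish unless the underlying scalar indices coincide. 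The one substantive case is $\Ical_n^m$ against $\Ncal_{n'}^{m'}$: it can be nonzero only when $n'=n+2$ and $m=m'$, and there
\[
\int_{\bbS}\Ical_n^m\cdot\Ncal_{n+2}^m\,ds=\big(-(n+1)(n+2)+(n+1)(n+2)\big)\|Y_{n+1}^m\|^2=0,
\]
since $\int_{\bbS}|\nabla_{\bbS}Y_{n+1}^m|^2\,ds=(n+1)(n+2)\|Y_{n+1}^m\|^2$. Together with the completeness above, this exhibits $(\Ical_n^m,\Tcal_n^m,\Ncal_n^m)$ as an orthogonal basis.

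I expect the main obstacle to be the completeness step — rigorously justifying the Hodge decomposition of an arbitrary tangential $L^2$-field on $\bbS$, and the $L^2$-density of the scalar spherical-harmonic expansions of $\bw\cdot\bnu$, $f$ and $g$ — whereas the orthogonality is routine once one tracks the index shifts carefully (the $n=0,1$ boundary cases and the jump $n'=n+2$). Alternatively, and more economically, one may invoke the classical theory of vector spherical harmonics \cite{Jcn} and identify $(\Ical_n^m,\Tcal_n^m,\Ncal_n^m)$ with the standard triad up to normalization.
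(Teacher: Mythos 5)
The paper does not actually prove Lemma~\ref{lem:vec}: it is imported from N\'ed\'elec \cite{Jcn} together with Lemmas~\ref{lem:1} and \ref{lem:2} and used as a black box. Your proposal therefore supplies a proof where the paper offers only a citation, and the argument is sound. The orthogonality computations check out: the $\Tcal$-against-$\Ical$ and $\Tcal$-against-$\Ncal$ pairings vanish via $\nabla_{\bbS}\cdot(\nabla_{\bbS}Y_k^m\wedge\bnu)=0$ from Lemma~\ref{lem:1}; same-type pairings with distinct $(k,m)$ vanish because both their tangential and normal contributions reduce to orthogonality of the scalar family $\{Y_k^m\}$ (you phrase this only in terms of $\int\nabla_{\bbS}Y_k^m\cdot\nabla_{\bbS}Y_\ell^{m'}$, which omits the normal piece, but that piece also carries $\delta_{k\ell}\delta_{mm'}$, so the conclusion is unaffected); and the one genuinely delicate cross term, $\Ical_n^m$ against $\Ncal_{n+2}^m$, cancels exactly because the weights $n+1$ and $n+2$ are calibrated to $\int_{\bbS}|\nabla_{\bbS}Y_{n+1}^m|^2\,ds=(n+1)(n+2)\norm{Y_{n+1}^m}^2$. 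The change of basis $\{\nabla_{\bbS}Y_k^m,\ Y_k^m\bnu\}\leftrightarrow\{\Ical_{k-1}^m,\Ncal_{k+1}^m\}$ together with the degenerate case $\Ncal_1^0=Y_0^0\bnu$ correctly yields completeness, granting the Hodge input.

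That Hodge input is, as you anticipate, the one non-elementary ingredient. On $\bbS=S^2$ the decomposition $\bw_T=\nabla_{\bbS}f+\nabla_{\bbS}g\wedge\bnu$ with no harmonic remainder is available because $H^1_{\mathrm{dR}}(S^2)=0$, but to run it at the $L^2$ level one has to solve $\triangle_{\bbS}f=\nabla_{\bbS}\cdot\bw_T$ and $\triangle_{\bbS}g=-\nabla_{\bbS}\cdot(\bw_T\wedge\bnu)$ distributionally via the spectral resolution of $\triangle_{\bbS}$ and then verify orthogonality of the three pieces, or else invoke a closed-range argument; a fully self-contained write-up should say which. A route that sidesteps Hodge theory altogether, and is closer to how \cite{Jcn} typically proceeds, is to expand each Cartesian component of $\bw$ in $\{Y_n^m\}$ and then check by explicit recurrence identities that every $Y_n^m\be_i$ lies in the span of $\{\Ical_{n-1}^{m'},\Tcal_n^{m'},\Ncal_{n+1}^{m'}\}$; this reduces completeness entirely to the scalar case of Lemma~\ref{lem:2}. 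Either way your proposal is correct and in fact supplies more than the paper records.
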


From Lemma \ref{lem:vec}, one has that
\[
  \Ical_{n-1}^m= \nabla_{\bbS}Y_{n}^m +n Y_{n}^m \bnu,
\]
which is a vectorial spherical harmonics of order $n-1$. Thus $\Ical_{n-1}^m$ can be expressed by
\begin{equation}\label{eq:coa}
 \Ical_{n-1}^m=\bA_{n-1,m}\bY_{n-1},
\end{equation}
where
\[
 \bY_{n-1}=[Y_{n-1}^{-(n-1)},\cdots, Y_{n-1}^{n-1}]^T,
\]
and $\bA_{n-1,m}$ is a $3\times(2n-1)$ matrix given by
\[
  \bA_{n-1,m}=[\ba_{n-1,m}^{-(n-1)}, \cdots, \ba_{n-1,m}^{n-1}].
\]
Similarly, the vectorial spherical harmonics $\Ncal_{n+1}^m$ is of order $n+1$. Hence, it can be expressed as
\begin{equation}\label{eq:coc}
 \Ncal_{n+1}^m=\bC_{n+1,m}\bY_{n+1},
\end{equation}
where $\bC_{n+1,m}$ is a $3\times(2n+3)$ matrix given as follows
\[
  \bC_{n+1,m}=[\bc_{n+1,m}^{-(n+1)}, \cdots, \bc_{n+1,m}^{n+1}].
\]

Next, we prove three important propositions.
\begin{prop}\label{pr1}
The following identities hold
\[
\begin{split}
 \int_{\bbS} \overline{Y}_{n-1}^qY_n^m\bnu ds=\frac{\ba_{n-1,m}^q}{2n+1}, \quad  &\int_{\bbS} \overline{Y}_{n-1}^q \nabla_{\bbS}Y_n^m ds=\frac{n+1}{2n+1} \ba_{n-1,m}^q,\\
  \int_{\bbS} \overline{Y}_{n+1}^qY_n^m\bnu ds=\frac{\bc_{n+1,m}^q}{2n+1}, \quad  &\int_{\bbS} \overline{Y}_{n+1}^q \nabla_{\bbS}Y_n^m ds=\frac{-n}{2n+1} \bc_{n+1,m}^q,
 \end{split}
\]
and
\[
 \int_{\bbS} \overline{Y}_{p}^qY_n^m\bnu ds=0, \quad  \int_{\bbS} \overline{Y}_{p}^q \nabla_{\bbS}Y_n^m ds=0, \quad \mbox{for} \quad p\geq0, \; p\neq n-1, n+1,
\]
where and also in what follows, the ovelrine denotes the complex conjugate. Moreover, the coefficient vectors $\ba_{n,m}^{q}$ and $\bc_{n+1,q}^{m}$, defined in \eqref{eq:coa} and \eqref{eq:coc}, satisfy the following identity
\begin{equation}\label{co:ac}
 \ba_{n,m}^{q} = \frac{2n+3}{2n+1}\overline{\bc}_{n+1,q}^{m}.
\end{equation}
\end{prop}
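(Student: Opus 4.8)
The plan is to reduce every identity to the orthogonality of the scalar spherical harmonics $\{Y_n^m\}$ on $\bbS$, together with the fact (Lemma~\ref{lem:vec}) that $\Ical_{n-1}^m = \nabla_{\bbS}Y_n^m + n Y_n^m\bnu$ is a vectorial spherical harmonic of order $n-1$ and $\Ncal_{n+1}^m = -\nabla_{\bbS}Y_n^m + (n+1)Y_n^m\bnu$ is one of order $n+1$. First I would write, for an arbitrary scalar $v\in H^1(\bbS)$, the two integrals $\int_{\bbS}\overline{v}\,Y_n^m\bnu\,ds$ and $\int_{\bbS}\overline{v}\,\nabla_{\bbS}Y_n^m\,ds$ as linear combinations of $\int_{\bbS}\overline{v}\,\Ical_{n-1}^m\,ds$ and $\int_{\bbS}\overline{v}\,\Ncal_{n+1}^m\,ds$. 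Solving the $2\times2$ linear system coming from the definitions gives
\begin{equation*}
Y_n^m\bnu = \frac{1}{2n+1}\bigl(\Ical_{n-1}^m + \Ncal_{n+1}^m\bigr),\qquad
\nabla_{\bbS}Y_n^m = \frac{1}{2n+1}\bigl((n+1)\Ical_{n-1}^m - n\,\Ncal_{n+1}^m\bigr).
\end{equation*}
This is the key algebraic identity and the only genuinely non-routine step; everything else is bookkeeping.

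Next I would take $v = Y_p^q$. Using \eqref{eq:coa}, i.e. $\Ical_{n-1}^m = \bA_{n-1,m}\bY_{n-1}$ so that the $q$-th component relation reads $\int_{\bbS}\overline{Y}_{n-1}^q\,\Ical_{n-1}^m\,ds = \ba_{n-1,m}^q$ by orthonormality of $\{Y_{n-1}^q\}_{q=-(n-1)}^{n-1}$, and likewise \eqref{eq:coc} giving $\int_{\bbS}\overline{Y}_{n+1}^q\,\Ncal_{n+1}^m\,ds = \bc_{n+1,m}^q$, I substitute into the two displayed formulas above. Since $\Ical_{n-1}^m$ is a combination of $Y_{n-1}^\bullet$ only and $\Ncal_{n+1}^m$ of $Y_{n+1}^\bullet$ only, the cross terms $\int_{\bbS}\overline{Y}_{n-1}^q\,\Ncal_{n+1}^m\,ds$ and $\int_{\bbS}\overline{Y}_{n+1}^q\,\Ical_{n-1}^m\,ds$ vanish, and $\int_{\bbS}\overline{Y}_p^q(\cdot)\,ds = 0$ for $p\neq n-1,n+1$ because those $Y_p^q$ are orthogonal to both vectorial harmonics. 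Reading off the $q$-components then yields exactly the four stated integral identities and the vanishing statement.

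Finally, for \eqref{co:ac} I would compare two expressions for the same scalar integral $\int_{\bbS}\overline{Y}_n^m\,Y_{n+1}^q\,\bnu\,ds$ (or apply the first identity with the roles of $n$ and $n+1$ interchanged). From the first line of the proposition applied at order $n+1$, this integral equals $\ba_{n,m}^{q}/(2(n+1)+1) = \ba_{n,m}^q/(2n+3)$ — careful, one must match indices, so concretely: apply $\int_{\bbS}\overline{Y}_{n-1}^q Y_n^m\bnu\,ds = \ba_{n-1,m}^q/(2n+1)$ with $n\mapsto n+1$ to get $\int_{\bbS}\overline{Y}_{n}^q Y_{n+1}^m\bnu\,ds = \ba_{n,q}^m/(2n+3)$. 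On the other hand, the third identity gives $\int_{\bbS}\overline{Y}_{n+1}^q Y_n^m\bnu\,ds = \bc_{n+1,m}^q/(2n+1)$; taking complex conjugates and using that $\bnu$ is real and $\overline{Y_n^m}$ is (up to the standard sign convention absorbed in the index) another spherical harmonic, one relates $\overline{\bc}_{n+1,m}^q$ to the integral $\int_{\bbS} Y_{n+1}^q\,\overline{Y}_n^m\,\bnu\,ds = \overline{\int_{\bbS}\overline{Y}_{n+1}^q Y_n^m\bnu\,ds}$. Matching the two evaluations of $\int_{\bbS}\overline{Y}_n^m Y_{n+1}^q\bnu\,ds$ gives $\ba_{n,m}^q/(2n+3) = \overline{\bc}_{n+1,q}^m/(2n+1)$, which rearranges to \eqref{co:ac}. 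The only subtlety to be careful about here is keeping the index slots $(n,m)$ versus $(n,q)$ straight and using $\overline{Y_n^m} = (-1)^m Y_n^{-m}$ consistently so the conjugation is handled correctly; the computation itself is immediate once the displayed decomposition of $Y_n^m\bnu$ and $\nabla_{\bbS}Y_n^m$ is in hand. $\square$
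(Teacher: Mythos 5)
Your proof is correct and takes essentially the paper's approach: you invert the $2\times2$ decomposition $\Ical_{n-1}^m=\nabla_{\bbS}Y_n^m+nY_n^m\bnu$, $\Ncal_{n+1}^m=-\nabla_{\bbS}Y_n^m+(n+1)Y_n^m\bnu$ and then project against $\overline{Y}_p^q$ (the paper projects first and then solves the same $2\times2$ linear system), and \eqref{co:ac} is obtained by conjugating the first integral identity and shifting $n\mapsto n+1$, exactly as in the paper. Two small bookkeeping remarks on your last paragraph: the two evaluations of $\int_{\bbS}\overline{Y}_n^m Y_{n+1}^q\bnu\,ds$ are in fact $\ba_{n,q}^m/(2n+3)$ and $\overline{\bc}_{n+1,m}^q/(2n+1)$ (your displayed equality is the version after relabeling $m\leftrightarrow q$, and your intermediate step ``$\ba_{n,q}^m/(2n+3)$'' for $\int_{\bbS}\overline{Y}_n^q Y_{n+1}^m\bnu\,ds$ should read $\ba_{n,m}^q/(2n+3)$), and the reflection $\overline{Y_n^m}=(-1)^mY_n^{-m}$ is not actually needed --- conjugating the whole integral identity suffices.
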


\begin{proof}
 From Lemma \ref{lem:vec} and the identities in \eqref{eq:coa} and \eqref{eq:coc}, one has that
 \begin{equation}\label{eq:pr1_1}
 \begin{split}
  \nabla_{\bbS}Y_{n}^m +n Y_{n}^m \bnu &= \bA_{n-1,m}\bY_{n-1}, \\
   -\nabla_{\bbS}Y_{n}^m +(n+1) Y_{n}^m \bnu &= \bC_{n+1,m}\bY_{n+1}.
 \end{split}
 \end{equation}
 Multiplying $ \overline{Y}_{n-1}^q$ on both sides of \eqref{eq:pr1_1} and integrating on the unit sphere $\bbS$ yield that
 \begin{equation}\label{eq:pr1_2}
 \int_{\bbS} \overline{Y}_{n-1}^q \nabla_{\bbS}Y_n^m ds +  n\int_{\bbS} \overline{Y}_{n-1}^qY_n^m\bnu ds =\ba_{n-1,m}^q,
 \end{equation}
 and
 \begin{equation}\label{eq:pr1_3}
-\int_{\bbS} \overline{Y}_{n-1}^q \nabla_{\bbS}Y_n^m ds +  (n+1)\int_{\bbS} \overline{Y}_{n-1}^qY_n^m\bnu ds=0.
\end{equation}
Solving the equations \eqref{eq:pr1_2} and \eqref{eq:pr1_3}, one can obtain that
\begin{equation}\label{eq:ad1}
  \int_{\bbS} \overline{Y}_{n-1}^qY_n^m\bnu ds=\frac{\ba_{n-1,m}^q}{2n+1},\ \  \int_{\bbS} \overline{Y}_{n-1}^q \nabla_{\bbS}Y_n^m ds=\frac{n+1}{2n+1} \ba_{n-1,m}^q,
\end{equation}
which are the first two identities in the proposition. By a similar argument, the other four integral identities can be proved.

The rest of the proof is to show the coefficient identity \eqref{co:ac}. Taking the complex conjugate on both sides of the equation \eqref{eq:ad1} and replacing $n$ with $n+1$ yield that
\begin{equation}\label{eq:au1}
  \int_{\bbS} \overline{Y}_{n+1}^mY_{n}^q\bnu ds=\frac{\overline{\ba}_{n,m}^q}{2n+3}.
\end{equation}
Comparing the equation \eqref{eq:au1} with the third integral identity of this proposition shows that
\[
\ba_{n,m}^{q} = \frac{2n+3}{2n+1}\overline{\bc}_{n+1,q}^{m},
\]
and this completes the proof.
\end{proof}

\begin{prop}\label{pr2}
The following identities hold
\[
\begin{split}
  &\int_{\bbS} (\nabla_{\bbS} \overline{Y}_{n-1}^q \cdot \nabla_{\bbS}Y_n^m) \bnu ds=\frac{(n+1)(n-1)}{2n+1} \ba_{n-1,m}^q,\\
  &\int_{\bbS} (\nabla_{\bbS} \overline{Y}_{n+1}^q \cdot \nabla_{\bbS}Y_n^m) \bnu ds=\frac{n(n+2)}{2n+1} \bc_{n+1,m}^q,
 \end{split}
\]
and
\[
 \int_{\bbS} (\nabla_{\bbS} \overline{Y}_{p}^q \cdot \nabla_{\bbS}Y_n^m) \bnu ds=0, \quad \mbox{for} \quad p\geq0, \; p\neq n-1, n+1,
\]
where the the coefficient vectors $\ba_{n,m}^{q}$ and $\bc_{n,m}^{q}$ are defined in \eqref{eq:coa} and \eqref{eq:coc}, respectively.
\end{prop}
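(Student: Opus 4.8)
The plan is to reduce Proposition~\ref{pr2} to Proposition~\ref{pr1} by exploiting the identity $\nabla_\bbS \overline{Y}_{p}^q \cdot \nabla_\bbS Y_n^m = \nabla_\bbS\cdot(\overline{Y}_p^q \nabla_\bbS Y_n^m) - \overline{Y}_p^q \triangle_\bbS Y_n^m$, which follows from the product rule in Lemma~\ref{lem:1}. Using Lemma~\ref{lem:2} we have $\triangle_\bbS Y_n^m = -n(n+1)Y_n^m$, so the second term contributes $n(n+1)\int_\bbS \overline{Y}_p^q Y_n^m \bnu\, ds$, which is already evaluated in Proposition~\ref{pr1}: it equals $\frac{n(n+1)}{2n+1}\ba_{n-1,m}^q$ when $p = n-1$, $\frac{n(n+1)}{2n+1}\bc_{n+1,m}^q$ when $p = n+1$, and $0$ otherwise. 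So the whole task comes down to computing $\int_\bbS \nabla_\bbS\cdot(\overline{Y}_p^q\nabla_\bbS Y_n^m)\,\bnu\, ds$.

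To handle that divergence term I would integrate by parts on the sphere. Since $\bbS$ is closed (no boundary), $\int_\bbS \nabla_\bbS\cdot \bG\, ds = 0$ for a tangential field $\bG$, but here the integrand carries the extra vector factor $\bnu$, so I instead apply the third identity in Lemma~\ref{lem:1} componentwise: for each Cartesian component $\nu_k$ of $\bnu$, write $\int_\bbS (\nabla_\bbS\cdot \bG)\,\nu_k\, ds = -\int_\bbS \bG\cdot\nabla_\bbS\nu_k\, ds$ with $\bG = \overline{Y}_p^q\nabla_\bbS Y_n^m$. The surface gradient of the components of $\bnu$ on the unit sphere is explicitly known: $\nabla_\bbS \nu_k = \be_k - \nu_k\bnu$ (the projection of $\be_k$ onto the tangent plane), equivalently the component form of the shape operator being the identity. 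Substituting this gives $-\int_\bbS \overline{Y}_p^q\, \nabla_\bbS Y_n^m\cdot(\be_k - \nu_k\bnu)\, ds = -\int_\bbS \overline{Y}_p^q\, (\partial_k^{\bbS} Y_n^m)\, ds$, using that $\nabla_\bbS Y_n^m$ is tangential so its dot product with $\nu_k\bnu$ vanishes. Reassembling the $k$ components, this is $-\int_\bbS \overline{Y}_p^q\,\nabla_\bbS Y_n^m\, ds$, which is again one of the integrals in Proposition~\ref{pr1}: $-\frac{n+1}{2n+1}\ba_{n-1,m}^q$ for $p=n-1$, $\frac{n}{2n+1}\bc_{n+1,m}^q$ for $p=n+1$, and $0$ otherwise.

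Combining the two contributions finishes the proof. For $p = n-1$: the divergence term gives $-\frac{n+1}{2n+1}\ba_{n-1,m}^q$ and the Laplace--Beltrami term gives $n(n+1)\cdot\frac{1}{2n+1}\ba_{n-1,m}^q$, so the total is $\frac{(n+1)(n-1)}{2n+1}\ba_{n-1,m}^q$, as claimed. For $p = n+1$: we get $\frac{n}{2n+1}\bc_{n+1,m}^q + n(n+1)\cdot\frac{1}{2n+1}\bc_{n+1,m}^q = \frac{n(n+2)}{2n+1}\bc_{n+1,m}^q$. For $p\neq n\pm 1$, both pieces vanish and the integral is $0$.

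The only delicate point is the justification of the componentwise integration-by-parts step and the identity $\nabla_\bbS\nu_k = \be_k - \nu_k\bnu$; everything else is bookkeeping that reduces mechanically to Proposition~\ref{pr1}. One should also double check the regularity needed to apply Lemma~\ref{lem:1} (the $Y_n^m$ are smooth, so this is not an issue) and be careful that the product-rule identity is being used with a scalar ($\overline{Y}_p^q$) times a tangential vector ($\nabla_\bbS Y_n^m$), so that $\triangle_\bbS$ acts only on $Y_n^m$. I expect the main obstacle, if any, to be sign bookkeeping in matching the final coefficients, not any conceptual difficulty.
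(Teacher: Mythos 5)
Your proof is correct and follows essentially the same route as the paper: both arguments work componentwise, combine the surface product rule and integration by parts from Lemma~\ref{lem:1} with the Laplace--Beltrami eigenvalue from Lemma~\ref{lem:2} and the identity $\nabla_{\bbS}\nu_k=\be_k-\nu_k\bnu$ (which the paper leaves implicit), arriving at the same intermediate reduction $n(n+1)\int_{\bbS}\overline{Y}_p^qY_n^m\bnu\,ds-\int_{\bbS}\overline{Y}_p^q\nabla_{\bbS}Y_n^m\,ds$, and then invoke Proposition~\ref{pr1}. The only cosmetic difference is the order in which the product rule and the integration by parts are applied.
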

\begin{proof}
From Lemmas \ref{lem:1} and \ref{lem:2}, one has by direct calculations that
\[
 \begin{split}
  &\int_{\bbS} (\nabla_{\bbS} \overline{Y}_{p}^q \cdot \nabla_{\bbS}Y_n^m) \bnu ds =\sum_{i=1}^3\be_i  \int_{\bbS} \nabla_{\bbS} \overline{Y}_{p}^q \cdot \nabla_{\bbS}Y_n^m (\bnu\cdot \be_i) ds \\
=& -\sum_{i=1}^3\be_i  \int_{\bbS} \overline{Y}_{p}^q  \nabla_{\bbS} \cdot \left( \nabla_{\bbS}Y_n^m (\bnu\cdot \be_i) \right) ds \\
=& -\sum_{i=1}^3\be_i  \int_{\bbS} \overline{Y}_{p}^q \left(  \triangle_{\bbS}Y_n^m (\bnu\cdot \be_i) +  \nabla_{\bbS}Y_n^m\cdot  \nabla_{\bbS}(\bnu\cdot \be_i)   \right)ds\\
=&  n(n+1) \int_{\bbS} \overline{Y}_{p}^q Y_n^m \bnu - \int_{\bbS} \overline{Y}_{p}^q \nabla_{\bbS}Y_n^m ds,
 \end{split}
\]
where and also in what follows, $\be_i$, $i=1,2,3$ are Euclidean unit vectors. With the help of Proposition \ref{pr1}, one can then obtain the integral identities of this proposition.

 The proof is complete.
\end{proof}

\begin{prop}\label{pr3}
The following identities hold
\[
\begin{split}
  &\int_{\bbS} \nabla_{\bbS} (\nabla_{\bbS} \overline{Y}_{n-1}^q) \cdot \nabla_{\bbS}Y_n^m ds=\frac{-n(n+1)(n-1)}{2n+1} \ba_{n-1,m}^q,\\
  &\int_{\bbS} \nabla_{\bbS} (\nabla_{\bbS} \overline{Y}_{n+1}^q) \cdot \nabla_{\bbS}Y_n^m ds=\frac{n(n+1)(n+2)}{2n+1} \bc_{n+1,m}^q,
 \end{split}
\]
and
\[
\int_{\bbS} \nabla_{\bbS} (\nabla_{\bbS} \overline{Y}_{p}^q) \cdot \nabla_{\bbS}Y_n^m ds=0, \quad \mbox{for} \quad p\geq0, \; p\neq n-1, n+1,
\]
where the the coefficient vectors $\ba_{n,m}^{q}$ and $\bc_{n,m}^{q}$ are defined in \eqref{eq:coa} and \eqref{eq:coc}, respectively.
\end{prop}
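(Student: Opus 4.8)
The plan is to reduce the vector identity to the scalar identities already established in Propositions \ref{pr1} and \ref{pr2}, exactly as Proposition \ref{pr2} was reduced to Proposition \ref{pr1}. First I would expand the integrand componentwise: writing $\nabla_{\bbS}\overline{Y}_p^q = \sum_{i=1}^3 (\nabla_{\bbS}\overline{Y}_p^q\cdot\be_i)\be_i$, so that
\[
\int_{\bbS} \nabla_{\bbS}(\nabla_{\bbS}\overline{Y}_p^q)\cdot\nabla_{\bbS}Y_n^m\, ds
=\sum_{i=1}^3 \be_i \int_{\bbS} \nabla_{\bbS}\big(\nabla_{\bbS}\overline{Y}_p^q\cdot\be_i\big)\cdot\nabla_{\bbS}Y_n^m\, ds.
\]
For each fixed $i$, the integrand is $\nabla_{\bbS}u\cdot\nabla_{\bbS}v$ with $u=\nabla_{\bbS}\overline{Y}_p^q\cdot\be_i$ a scalar function on $\bbS$ and $v=Y_n^m$. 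Then I would apply the integration-by-parts identity $\int_{\bbS}\nabla_{\bbS}u\cdot\nabla_{\bbS}v\,ds=-\int_{\bbS} u\,\triangle_{\bbS}v\,ds$ (which follows from the third relation of Lemma \ref{lem:1} together with $\triangle_{\bbS}v=\nabla_{\bbS}\cdot\nabla_{\bbS}v$) and invoke Lemma \ref{lem:2} to replace $\triangle_{\bbS}Y_n^m$ by $-n(n+1)Y_n^m$. This yields
\[
\int_{\bbS} \nabla_{\bbS}(\nabla_{\bbS}\overline{Y}_p^q)\cdot\nabla_{\bbS}Y_n^m\, ds
= n(n+1)\sum_{i=1}^3 \be_i \int_{\bbS} \big(\nabla_{\bbS}\overline{Y}_p^q\cdot\be_i\big) Y_n^m\, ds
= n(n+1)\int_{\bbS} Y_n^m\,\nabla_{\bbS}\overline{Y}_p^q\, ds.
\]

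At this point the right-hand side is, up to the factor $n(n+1)$ and a complex conjugation, precisely the second-column integrals of Proposition \ref{pr1}: taking the complex conjugate there gives $\int_{\bbS} Y_n^m\nabla_{\bbS}\overline{Y}_{n-1}^q\,ds = \tfrac{n+1}{2n+1}\overline{\ba}_{n-1,m}^q$ and similarly $\int_{\bbS} Y_n^m\nabla_{\bbS}\overline{Y}_{n+1}^q\,ds = \tfrac{-n}{2n+1}\overline{\bc}_{n+1,m}^q$, with vanishing integral for $p\neq n-1,n+1$. Multiplying by $n(n+1)$ and using the coefficient reconciliation \eqref{co:ac} (and its analogue) to rewrite $\overline{\ba}$ and $\overline{\bc}$ back in terms of $\ba_{n-1,m}^q$ and $\bc_{n+1,m}^q$ produces exactly the stated right-hand sides $\tfrac{-n(n+1)(n-1)}{2n+1}\ba_{n-1,m}^q$ and $\tfrac{n(n+1)(n+2)}{2n+1}\bc_{n+1,m}^q$. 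The vanishing case $p\neq n-1,n+1$ is immediate from the corresponding vanishing statement in Proposition \ref{pr1}.

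The one point that needs a little care — and the likely main obstacle — is the bookkeeping of complex conjugates and the index swap in \eqref{co:ac}: the integrals in Proposition \ref{pr1} are written with $\overline{Y}_{n-1}^q\nabla_{\bbS}Y_n^m$, whereas here the conjugate sits on the other factor, so one must either re-derive the conjugated versions directly (repeating the argument of Proposition \ref{pr1} with the roles of the two harmonics exchanged) or carefully track the passage through \eqref{co:ac}. A clean alternative that sidesteps this is to observe that $\nabla_{\bbS}(\nabla_{\bbS}\overline{Y}_p^q)\cdot\nabla_{\bbS}Y_n^m$ differs from $(\nabla_{\bbS}\overline{Y}_p^q\cdot\nabla_{\bbS}Y_n^m)\bnu$ of Proposition \ref{pr2} and from $\overline{Y}_p^q\nabla_{\bbS}Y_n^m$-type terms only through the identity $\nabla_{\bbS}\cdot(\nabla_{\bbS}u\,w)=\triangle_{\bbS}u\,w+\nabla_{\bbS}u\cdot\nabla_{\bbS}w$ of Lemma \ref{lem:1}, so one can combine the outputs of Propositions \ref{pr1} and \ref{pr2} linearly. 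Either way the computation is routine once the componentwise reduction and the Laplace–Beltrami eigenvalue substitution are in place.
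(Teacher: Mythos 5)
Your componentwise expansion, integration by parts, and the Laplace--Beltrami eigenvalue substitution correctly reduce the integral to $n(n+1)\int_{\bbS}\nabla_{\bbS}\overline{Y}_p^q\,Y_n^m\,ds$, which is exactly the intermediate step in the paper's proof. The problem is the next step. Taking the complex conjugate of the second column of Proposition~\ref{pr1}, namely $\int_{\bbS}\overline{Y}_{n-1}^q\nabla_{\bbS}Y_n^m\,ds=\tfrac{n+1}{2n+1}\ba_{n-1,m}^q$, produces $\int_{\bbS}Y_{n-1}^q\,\nabla_{\bbS}\overline{Y}_n^m\,ds=\tfrac{n+1}{2n+1}\overline{\ba}_{n-1,m}^q$, where the gradient now falls on $\overline{Y}_n^m$. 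That is \emph{not} the integral you need, which is $\int_{\bbS}\nabla_{\bbS}\overline{Y}_{n-1}^q\,Y_n^m\,ds$: the gradient sits on the lower-degree harmonic. These two integrals have different values, and your claimed equation $\int_{\bbS}Y_n^m\,\nabla_{\bbS}\overline{Y}_{n-1}^q\,ds=\tfrac{n+1}{2n+1}\overline{\ba}_{n-1,m}^q$ is false. Indeed, after multiplying by $n(n+1)$ you would get the coefficient $\tfrac{n(n+1)^2}{2n+1}$, whereas the target is $\tfrac{-n(n+1)(n-1)}{2n+1}$; no amount of shuffling $\ba\leftrightarrow\overline{\bc}$ via \eqref{co:ac} can reconcile $(n+1)^2$ with $-(n+1)(n-1)$.

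What you actually need is the surface gradient theorem $\int_{\bbS}\nabla_{\bbS}f\,ds = 2\int_{\bbS}f\,\bnu\,ds$ on the unit sphere (obtained from Lemma~\ref{lem:1} with $\bw=\be_i$, using that the surface divergence of the tangential part of $\be_i$ is $-2\,\bnu\cdot\be_i$), combined with the product rule from Lemma~\ref{lem:1}. This yields
\[
\int_{\bbS}\nabla_{\bbS}\overline{Y}_p^q\,Y_n^m\,ds
= 2\int_{\bbS}\overline{Y}_p^q Y_n^m\,\bnu\,ds - \int_{\bbS}\overline{Y}_p^q\,\nabla_{\bbS}Y_n^m\,ds,
\]
and both integrals on the right are given by Proposition~\ref{pr1}. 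For $p=n-1$ the coefficient is $\tfrac{2-(n+1)}{2n+1}=\tfrac{1-n}{2n+1}$, and multiplying by $n(n+1)$ gives the stated $\tfrac{-n(n+1)(n-1)}{2n+1}\ba_{n-1,m}^q$; the case $p=n+1$ is analogous. This is the content that the paper's "directly follows from Proposition~\ref{pr1}" is eliding. Your alternative suggestion of "combining Propositions~\ref{pr1} and~\ref{pr2} linearly" is too vague to rescue the argument: Proposition~\ref{pr2} supplies integrals of the form $(\nabla_{\bbS}\overline{Y}\cdot\nabla_{\bbS}Y)\bnu$, which are not the ingredients needed after your reduction; the needed ingredients are both from Proposition~\ref{pr1}, connected by the gradient theorem above.
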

\begin{proof}
From Lemmas \ref{lem:1} and \ref{lem:2}, one has that
\[
 \begin{split}
  & \int_{\bbS} \nabla_{\bbS} (\nabla_{\bbS} \overline{Y}_{p}^q) \cdot \nabla_{\bbS}Y_n^m ds =\sum_{i=1}^3\be_i   \int_{\bbS} \nabla_{\bbS} (\nabla_{\bbS} \overline{Y}_{p}^q\cdot \be_i) \cdot \nabla_{\bbS}Y_n^m ds \\
=&  n(n+1) \int_{\bbS}\nabla_{\bbS} \overline{Y}_{p}^q Y_n^m ds .
 \end{split}
\]
 Thus the  integral identities in this proposition directly follow from Proposition \ref{pr1}.

 The proof is complete.
\end{proof}

\section{Spectral results of the Neumann-Poincar\'e operator}

In this section, we derive the complete spectral system of the N-P operator for the elastic system within the finite-frequency regime. To that end, we first derive the spectral system of the single-layer potential and then utilize the jump formulation \eqref{eq:jump} to obtain the spectral system of the  N-P operator.

From the expression of the fundamental solution $\bGa^{\omega}$ in \eqref{eq:ef}, one can readily see that
\begin{equation}\label{eq:detwo}
 \bGa^{\omega}=\bGa^{\omega}_1 + \bGa^{\omega}_2,
\end{equation}
where
\[
\bGa^{\omega}_1 = -\frac{\bm{\delta}_{ij}}{4\pi\mu|\bx|}e^{\rmi k_s |\bx|} \quad \mbox{and} \quad \bGa^{\omega}_2= \frac{1}{4\pi \omega^2}\partial_i\partial_j\frac{e^{\rmi k_p|\bx|} - e^{\rmi k_s|\bx|}}{|\bx|}.
\]
For the first part, one has  $\bGa^{\omega}_1=\Phi(\bx) \bm{\delta}_{ij}/\mu$, where $\Phi(\bx)$ is the fundamental solution of the operator $\triangle + \omega^2$ defined in \eqref{eq:ful}. Moreover, the spectral system of the operator $S_{\bbS_R}^{k}$ defined in \eqref{eq:sh}, associated with the kernel function $\Phi(\bx)$, has been derived in \cite{s25}. For the convenience of readers, we include it in the following lemma.
\begin{lem}\label{lem:sk}
The eigen-system of the single layer potential operator $S_{\bbS_R}^{k}$ defined in \eqref{eq:sh} is given as follows
\begin{equation}\label{eq:sse}
 S_{\bbS_R}^{k}[Y_n^m](\bx)=-\rmi k R^2 j_n(kR) h_n(kR) Y_n^m, \quad \bx\in\bbS_R.
\end{equation}
Moreover, the following two indentities hold
\[
 S_{\bbS_R}^{k}[Y_n^m](\bx)=-\rmi k R^2 j_n(k|\bx|) h_n(kR)Y_n^m \quad \bx\in B_R,
\]
and
\[
 S_{\bbS_R}^{k}[Y_n^m](\bx)=-\rmi k R^2 j_n(kR) h_n(k|\bx|)Y_n^m \quad \bx\in \mathbb{R}^3\backslash {B_R}.
\]
\end{lem}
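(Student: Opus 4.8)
\textbf{Proof proposal for Lemma \ref{lem:sk}.}

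The plan is to reduce the statement to a classical computation with spherical waves, since the kernel $\Phi(\bx)=-e^{\rmi\omega|\bx|}/(4\pi|\bx|)$ is precisely the outgoing fundamental solution of $\triangle+\omega^2$, and its action on a spherical harmonic layer density on $\bbS_R$ can be evaluated by separation of variables. First I would recall the Rayleigh-type expansion of $\Phi(\bx-\by)$ in terms of spherical Bessel and Hankel functions: writing $|\bx|=\rho$, $|\by|=r$, one has $\Phi(\bx-\by)=\rmi k\sum_{n\geq 0} j_n(k\rho_<)h_n(k\rho_>)\sum_{|m|\leq n} Y_n^m(\hat\bx)\overline{Y_n^m(\hat\by)}$, where $\rho_<=\min(\rho,r)$, $\rho_>=\max(\rho,r)$, and here $k=\omega$. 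This is the standard addition theorem for the free-space Helmholtz Green's function (see \cite{CK}). Plugging $\by\in\bbS_R$ (so $r=R$) into \eqref{eq:sh} and using the orthonormality of $\{Y_n^m\}$ on $\bbS$, only the $(n,m)$ term survives, and the surface-integration contributes a factor $R^2$ from $ds(\by)=R^2 ds(\hat\by)$.

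Carrying this out gives $S_{\bbS_R}^{k}[Y_n^m](\bx)=\rmi k R^2 j_n(k\rho_<)h_n(k\rho_>)Y_n^m(\hat\bx)$ — wait, I should double-check the sign. The factor $\Phi=-e^{\rmi\omega|\bx|}/(4\pi|\bx|)$ carries a minus sign relative to the ``physicists' Green's function'' $e^{\rmi k|\bx|}/(4\pi|\bx|)$, so the expansion coefficient is $-\rmi k j_n h_n$, yielding $S_{\bbS_R}^{k}[Y_n^m](\bx)=-\rmi k R^2 j_n(k\rho_<)h_n(k\rho_>)Y_n^m$. For $\bx\in B_R$ we have $\rho_<=|\bx|$, $\rho_>=R$, giving the second identity; for $\bx\in\RR^3\setminus \overline{B_R}$ we have $\rho_<=R$, $\rho_>=|\bx|$, giving the third; and on $\bbS_R$ both arguments equal $R$, giving \eqref{eq:sse}. (Strictly, the third formula holds for $|\bx|>R$ and extends to $\bbS_R$ by continuity of the single-layer potential.)

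The main obstacle is bookkeeping rather than conceptual: one must track the precise normalization convention for $Y_n^m$ (whether the $4\pi$ or unit-sphere normalization is used — the statement of Lemma \ref{lem:vec} and Proposition \ref{pr1} fixes the unit-sphere $L^2(\bbS)$ normalization, which is consistent with the addition theorem as written) and the precise form of the addition theorem, including the placement of $\rmi k$ versus $\rmi$ and of the complex conjugate on $\overline{Y_n^m(\hat\by)}$. A secondary technical point is justifying that the interchange of summation and integration is legitimate and that the resulting series represents the single-layer potential pointwise; this follows from the uniform convergence of the Rayleigh expansion on compact subsets with $\rho_<<\rho_>$, together with the known $C^0$-continuity of $\bx\mapsto S_{\bbS_R}^{k}[Y_n^m](\bx)$ across $\bbS_R$, which handles the borderline case $\rho=r=R$. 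Since this lemma is quoted from \cite{s25}, I would present the computation in condensed form and refer there for the full details.
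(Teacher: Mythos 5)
Your proof is correct, and the sign self-correction is needed: with $\Phi(\bx)=-e^{\rmi k|\bx|}/(4\pi|\bx|)$ the addition-theorem prefactor is indeed $-\rmi k$, which is exactly the convention the paper records just below this lemma in the displayed expansion of $\Phi(\bx-\by)$. The paper itself does not prove the lemma (it is quoted from \cite{s25}), but the route you take --- the Rayleigh/addition-theorem expansion plus $L^2(\bbS)$-orthonormality of $Y_n^m$, the $R^2$ Jacobian, and continuity of the single-layer potential across $\bbS_R$ --- is the standard one and is consistent with the computations the paper carries out immediately afterward.
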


Thus, we mainly focus on handling the second term $\bGa^{\omega}_2$ given in \eqref{eq:detwo}. It is noted that the fundamental solution $\Phi(\bx-\by)$ defined in \eqref{eq:ful} has the following expansion (cf. \cite{CK})
\[
 \Phi(\bx-\by) =-\rmi k \sum_{n=0}^{\infty} \sum_{m=-n}^{n} h_n(k|\bx|)Y_n^m(\hat{\bx})j_n(k|\by|)\overline{Y}_n^m(\hat{\by}) \quad \mbox{for} \quad |\by|<|\bx|.
\]
By direct calculations, there holds that
\beq\label{eq:fistdirG01}
\begin{split}
&\nabla_\by\Phi(\bx-\by) =-\rmi k \sum_{n=0}^{\infty} \sum_{m=-n}^{n} h_n(k|\bx|)Y_n^m(\hat{\bx})\nabla_\by\big(j_n(k|\by|)\overline{Y}_n^m(\hat{\by})\big)  \\
=&-\rmi k \sum_{n=0}^{\infty} \sum_{m=-n}^{n} h_n(k|\bx|)Y_n^m(\hat{\bx})\Big(j_n'(k|\by|)k\overline{Y}_n^m(\hat{\by})\hat\by+j_n(k|\by|)\nabla_{\bbS}\overline{Y}_n^m(\hat{\by})/|\by|\Big),
\end{split}
\eeq
and
\beq\label{eq:normdirG01}
\begin{split}
\frac{\p}{\bnu_\by}\nabla_\by\Phi(\bx-\by) &= -\rmi k \sum_{n=0}^{\infty} \sum_{m=-n}^{n} h_n(k|\bx|)Y_n^m(\hat{\bx}) \left(j_n''(k|\by|)k^2\overline{Y}_n^m(\hat{\by})\hat\by \right. \\
& \left. +j_n'(k|\by|)k\nabla_{\bbS}\overline{Y}_n^m(\hat{\by})/|\by|  - j_n(k|\by|)\nabla_{\bbS}\overline{Y}_n^m(\hat{\by})\hat\by /|\by|^2 \right),
\end{split}
\eeq
where
\[
 \frac{\p}{\bnu_\by}\nabla_\by\Phi(\bx-\by) =\bnu_\by\cdot \nabla_\by^2\Phi(\bx-\by).
\]


With the help of Propositions \ref{pr1} and \ref{pr2}, one can derive the following important result.
\begin{prop}\label{prusef01}
There hold the following identities
\beq\label{eq:imptidt01}
\begin{split}
 &\int_{\bbS_R} \nabla_\bx^2\Phi(\bx-\by)\cdot \nabla_{\bbS}Y_n^m(\hat\by) ds = \\
 & -\rmi k  h_{n-1}(k|\bx|) \left(  j_{n-1}^{\prime}(kR)kR\frac{n(n+1)}{2n+1} - j_{n-1}(kR) \frac{n(n+1)(n-1)}{2n+1} \right)  \Ical_{n-1}^m \\
 &  -\rmi k  h_{n+1}(k|\bx|) \left( j_{n+1}^{\prime}(kR)kR \frac{n(n+1)}{2n+1}   +  j_{n+1}(kR) \frac{n(n+1)(n+2)}{2n+1}  \right) \Ncal_{n+1}^m .
\end{split}
\eeq
and
\beq\label{eq:imptidt02}
\begin{split}
 &\int_{\bbS_R} \nabla_\bx^2\Phi(\bx-\by)\cdot(Y_n^m(\hat\by)\bnu_{\by}) ds = \\
  & -\rmi k  h_{n-1}(k|\bx|) \left( \left(j_{n-1}(kR) -j_{n-1}^{\prime}(kR)kR \right) \frac{n-1}{2n+1} +  j_{n-1}^{\prime\prime}(kR) k^2R^2 \frac{1}{2n+1} \right) \Ical_{n-1}^m  \\
 &  -\rmi k  h_{n+1}(k|\bx|) \left( \left(j_{n+1}^{\prime}(kR)kR-j_{n+1}(kR) \right) \frac{n+2}{2n+1}  +  j_{n+1}^{\prime\prime}(kR) k^2R^2 \frac{1}{2n+1}   \right) \Ncal_{n+1}^m .
\end{split}
\eeq
\end{prop}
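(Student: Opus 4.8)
The plan is to reduce both identities \eqref{eq:imptidt01} and \eqref{eq:imptidt02} to the scalar integral identities already collected in Propositions \ref{pr1}, \ref{pr2}, and \ref{pr3}. First I would observe that for $\bx$ and $\by$ with $|\by|=R<|\bx|$, the spherical-harmonic expansion of $\Phi(\bx-\by)$ already cited before \eqref{eq:fistdirG01} yields, after applying $\nabla_\bx^2$ to the factor depending on $\hat\bx$ and $|\bx|$ and then performing the surface integration over $\bbS_R$ in the $\by$-variable, a decoupling over $n$: only the terms carrying $h_n(k|\bx|)Y_n^m(\hat\bx)$ survive for each fixed $(n,m)$, because of the orthonormality of $\{Y_n^m\}$ on $\bbS$. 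The subtlety is that the operator $\nabla_\bx^2$ acting on $h_n(k|\bx|)Y_n^m(\hat\bx)$ does not stay within the single spherical harmonic order $n$; rather, it spreads onto orders $n-1$ and $n+1$. This is precisely the mechanism by which $\Ical_{n-1}^m$ and $\Ncal_{n+1}^m$ appear on the right-hand side.

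Concretely, I would switch viewpoint and integrate by parts in $\by$: since $\nabla_\bx\Phi(\bx-\by)=-\nabla_\by\Phi(\bx-\by)$, one has $\nabla_\bx^2\Phi(\bx-\by)=\nabla_\by^2\Phi(\bx-\by)$ (the Hessian is even). Then, using the expansions \eqref{eq:fistdirG01} and \eqref{eq:normdirG01} for $\nabla_\by\Phi$ and $\frac{\p}{\bnu_\by}\nabla_\by\Phi$ on $\bbS_R$, the integrand in \eqref{eq:imptidt01} becomes a sum over $n'$ of $h_{n'}(k|\bx|)Y_{n'}^{m'}(\hat\bx)$ times a surface integral over $\bbS$ of the form $\int_{\bbS}\big(j_{n'}'(kR)kR\,\overline{Y}_{n'}^{m'}(\hat\by)\hat\by + j_{n'}(kR)\nabla_{\bbS}\overline{Y}_{n'}^{m'}(\hat\by)\big)\cdot\nabla_{\bbS}Y_n^m(\hat\by)\,ds(\hat\by)$, where I have pulled the radial derivative out of the surface integral (using $|\by|=R$) and written the gradient in spherical coordinates as $\nabla_\by = \hat\by\,\p_{|\by|} + \nabla_{\bbS}/|\by|$. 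The term $\int_{\bbS}(\overline{Y}_{n'}^{m'}\hat\by)\cdot\nabla_{\bbS}Y_n^m\,ds$ is handled by the second column of identities in Proposition \ref{pr1} (it is nonzero only for $n'=n-1,n+1$, producing multiples of $\ba_{n-1,m}^{m'}$ and $\bc_{n+1,m}^{m'}$ which reassemble into $\Ical_{n-1}^m$ and $\Ncal_{n+1}^m$ via \eqref{eq:coa}--\eqref{eq:coc}); the term $\int_{\bbS}\nabla_{\bbS}\overline{Y}_{n'}^{m'}\cdot\nabla_{\bbS}Y_n^m\,ds$ is handled by Lemma \ref{lem:1} plus Lemma \ref{lem:2}, giving $n(n+1)\int_{\bbS}\overline{Y}_{n'}^{m'}Y_n^m\,ds=n(n+1)\delta$, which is nonzero only for $n'=n$ — but wait, that would contribute only to order $n$, not $n\pm1$. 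So in fact the $\nabla_{\bbS}\overline{Y}_{n'}^{m'}$ piece must be paired with the \emph{normal} component of $\nabla_\bx^2\Phi$, i.e. with the $\hat\by$-directed terms of \eqref{eq:normdirG01}, and it is the cross terms — $\hat\by$-component of the Hessian against $\nabla_{\bbS}Y_n^m$, and $\nabla_{\bbS}$-component against $Y_n^m\bnu$ — that produce orders $n\pm1$. I would therefore organize the computation by expanding $\nabla_\by^2\Phi(\bx-\by)\big|_{\bbS_R}$ fully into its four pieces (normal-normal, normal-tangential, tangential-normal via the $\nabla_{\bbS}(\bnu\cdot\be_i)$ correction, and tangential-tangential) exactly as in the displayed computations inside the proofs of Propositions \ref{pr2} and \ref{pr3}, and then contract each piece against $\nabla_{\bbS}Y_n^m$ (for \eqref{eq:imptidt01}) or against $Y_n^m\bnu$ (for \eqref{eq:imptidt02}).

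The main obstacle is bookkeeping: one must correctly expand the Hessian $\nabla_\by^2\Phi$ restricted to the sphere into its radial/tangential components — which involves the second fundamental form of $\bbS_R$ and the identity $\nabla_{\bbS}(\bnu\cdot\be_i)$ that already appeared in the proof of Proposition \ref{pr2} — keep track of all the $j_n$, $j_n'$, $j_n''$ coefficients together with the correct powers of $kR$, and then recognize that the resulting linear combinations of $\ba_{n-1,m}^q$ and $\bc_{n+1,m}^q$ (summed against $\overline{Y}_{n\pm1}^q$) are exactly $\Ical_{n-1}^m$ and $\Ncal_{n+1}^m$. Once the four contraction identities from Propositions \ref{pr1}--\ref{pr3} are substituted, matching coefficients of $\Ical_{n-1}^m$ and $\Ncal_{n+1}^m$ gives \eqref{eq:imptidt01} and \eqref{eq:imptidt02} directly; the coefficient $\tfrac{1}{2n+1}$, and the combinations $\tfrac{n-1}{2n+1}$, $\tfrac{n(n+1)}{2n+1}$, $\tfrac{n(n+1)(n-1)}{2n+1}$, etc., are precisely the outputs of those three propositions, so no genuinely new estimate is needed — only careful assembly.
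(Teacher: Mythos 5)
Your plan starts from the right place ($\nabla_\bx^2\Phi=\nabla_\by^2\Phi$ and the $n$-decoupling of the spherical-harmonic expansion), and after the ``but wait'' you correctly diagnose that the $n\pm1$ shifts must come from the vector structure rather than from scalar orthogonality. Your treatment of \eqref{eq:imptidt02} is essentially the paper's: contracting the Hessian against $Y_n^m\bnu_\by$ annihilates the tangential slice and leaves $Y_n^m\,\frac{\p}{\bnu_\by}(\nabla_\by\Phi)$, which is exactly \eqref{eq:normdirG01}, and Proposition \ref{pr1} alone then produces $\Ical_{n-1}^m$ and $\Ncal_{n+1}^m$.

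For \eqref{eq:imptidt01}, however, your final plan --- expand the Hessian on $\bbS_R$ into four blocks and feed each into Propositions \ref{pr1}--\ref{pr3} --- has a concrete gap: the expansions \eqref{eq:fistdirG01} and \eqref{eq:normdirG01} supply $\nabla_\by\Phi$ and its \emph{normal} derivative on $\bbS_R$, but not the tangential block $\nabla_{\bbS}(\nabla_\by\Phi)$ that your scheme needs, and Propositions \ref{pr1}--\ref{pr3} do not give it either; you would be forced to derive a new surface-Hessian expansion of $\Phi$. The paper sidesteps this entirely. Since $\nabla_{\bbS}Y_n^m$ is tangent, the pointwise contraction satisfies, component by component, $\nabla_\by^2\Phi\cdot\nabla_{\bbS}Y_n^m = \frac{1}{R}\nabla_{\bbS}(\nabla_\by\Phi)\cdot\nabla_{\bbS}Y_n^m$, and one surface integration by parts together with $\triangle_{\bbS}Y_n^m=-n(n+1)Y_n^m$ collapses the integral to
\[
\int_{\bbS_R}\nabla_\by^2\Phi\cdot\nabla_{\bbS}Y_n^m\,ds
= \frac{n(n+1)}{R}\int_{\bbS_R}\nabla_\by\Phi\,Y_n^m\,ds,
\]
converting the test density from the tangent vector $\nabla_{\bbS}Y_n^m$ to the scalar $Y_n^m$, after which \eqref{eq:fistdirG01} and Proposition \ref{pr1} finish the job (Propositions \ref{pr2}--\ref{pr3} are not needed). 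That single integration by parts is the missing idea in your plan; your opening misstep --- pairing $\nabla_\by\Phi$ rather than $\nabla_\by^2\Phi$ against $\nabla_{\bbS}Y_n^m$ and arriving at a scalar --- is a symptom of not having this scalar reduction in hand.
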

\begin{proof}
Note that $\nabla_\bx^2\Phi(\bx-\by)=\nabla_\by^2\Phi(\bx-\by)$. Using integration by parts as well as Lemma \ref{lem:2}, there holds
\[
\begin{split}
&\int_{\bbS_R} \nabla_\bx^2\Phi(\bx-\by)\cdot \nabla_{\bbS}Y_n^m(\hat\by) ds = \int_{\bbS_R} \nabla_\by^2\Phi(\bx-\by)\cdot \nabla_{\bbS}Y_n^m(\hat\by) ds\\
=&-\frac{1}{R}\int_{\bbS_R} \nabla_\by\Phi(\bx-\by)\Delta_{\bbS}Y_n^m(\hat\by) ds
=n(n+1)\frac{1}{R}\int_{\bbS_R} \nabla_\by\Phi(\bx-\by)Y_n^m(\hat\by) ds.
\end{split}
\]
Therefore, the integral identity \eqnref{eq:imptidt01} follows from Proposition \ref{pr1} and the identity \eqnref{eq:fistdirG01}.
For the other integral identity, one has by direct calculations that
\[
\begin{split}
&\int_{\bbS_R} \nabla_\bx^2\Phi(\bx-\by)\cdot(Y_n^m(\hat\by)\bnu_{\by}) ds   \\
=&\int_{\bbS_R} \Big(\nabla_{\bbS_R}(\nabla_\by\Phi(\bx-\by))+\frac{\p}{\bnu_\by}(\nabla_\by\Phi(\bx-\by))\bnu_\by\Big)\cdot(Y_n^m(\hat\by)\bnu_{\by}) ds\\
=&\int_{\bbS_R} \frac{\p}{\bnu_\by}(\nabla_\by\Phi(\bx-\by))Y_n^m(\hat\by) ds.
\end{split}
\]
Finally, one can derive \eqnref{eq:imptidt02} from Proposition \ref{pr1} and the identity \eqnref{eq:normdirG01}.

The proof is complete.
\end{proof}

\begin{prop}\label{prusef02}
The following identity holds
\beq\label{eq:imptidt03}
\begin{split}
 \int_{\bbS_R} \nabla^2_\bx \Phi(\bx-\by)\cdot( \nabla_{\bbS}Y_n^m(\hat\by)\wedge \bnu_{\by}) ds =0.
\end{split}
\eeq
\end{prop}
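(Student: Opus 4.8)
The plan is to reduce this matrix-valued surface integral to a scalar, component-wise surface integration by parts on $\bbS_R$, and then to make it vanish by means of the curl-type identity $\nabla_{\bbS}\cdot(\nabla_{\bbS}v\wedge\bnu)=0$ of Lemma~\ref{lem:1}.

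First I would proceed exactly as in the proof of Proposition~\ref{prusef01}. Since $\Phi(\bx-\by)$ depends only on $\bx-\by$, one has $\nabla_\bx^2\Phi(\bx-\by)=\nabla_\by^2\Phi(\bx-\by)$; and writing $\bG(\by):=\nabla_\by\Phi(\bx-\by)$, the $\by$-gradient splits on $\bbS_R$ into its tangential and normal parts, so that
\[
\nabla_\by^2\Phi(\bx-\by)=\nabla_{\bbS_R}\bG(\by)+\Big(\frac{\p}{\bnu_\by}\bG(\by)\Big)\otimes\bnu_\by ,
\]
with $\bnu_\by$ occupying the outer (differentiated) slot. The test field $\bw(\by):=\nabla_{\bbS}Y_n^m(\hat\by)\wedge\bnu_\by$ is tangential to $\bbS_R$, i.e. $\bw\cdot\bnu_\by=0$, so contracting the outer slot against $\bw$ annihilates the normal term and leaves
\[
\int_{\bbS_R}\nabla_\bx^2\Phi(\bx-\by)\cdot\big(\nabla_{\bbS}Y_n^m(\hat\by)\wedge\bnu_\by\big)\,ds=\int_{\bbS_R}\big(\bw(\by)\cdot\nabla_{\bbS_R}\big)\bG(\by)\,ds .
\]

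Next I would argue component-wise. For each Cartesian component $G_i$ of $\bG$, tangentiality of $\bw$ gives $\bw\cdot\nabla_\by G_i=\bw\cdot\nabla_{\bbS_R}G_i$ on $\bbS_R$, hence by the integration by parts relation in Lemma~\ref{lem:1},
\[
\int_{\bbS_R}\bw\cdot\nabla_{\bbS_R}G_i\,ds=-\int_{\bbS_R}G_i\,\big(\nabla_{\bbS_R}\cdot\bw\big)\,ds .
\]
Since the field $\bw(\by)=\nabla_{\bbS}Y_n^m(\hat\by)\wedge\bnu_\by$ restricted to $\bbS_R$ is the tangential vector spherical harmonic $\Tcal_n^m$, the first identity of Lemma~\ref{lem:1} gives $\nabla_{\bbS_R}\cdot\bw=0$, so each component integral vanishes and \eqref{eq:imptidt03} follows. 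Alternatively, one may insert the expansions \eqref{eq:fistdirG01}--\eqref{eq:normdirG01} and invoke Propositions~\ref{pr1}--\ref{pr3} together with the orthogonality of $\Tcal_n^m=\nabla_{\bbS}Y_n^m\wedge\bnu$ to the families $\{\Ical_p^q\}$ and $\{\Ncal_p^q\}$ of Lemma~\ref{lem:vec}; this route is longer but mirrors the computation in Proposition~\ref{prusef01}.

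The statement carries no analytic difficulty --- it is an orthogonality relation and no estimates are involved. The only point requiring care is index bookkeeping: $\nabla_\bx^2\Phi$ is a $3\times 3$ matrix, and one must make sure that it is exactly the outer ($\by$-)derivative slot that is paired with $\bw$, so that its normal component is killed by $\bw\cdot\bnu_\by=0$ while its tangential component is removed by surface integration by parts. Once this is set up correctly, the vanishing is immediate from the divergence-freeness of $\nabla_{\bbS}Y_n^m\wedge\bnu$ on the sphere.
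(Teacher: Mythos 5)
Your argument is correct and follows the same route as the paper: replace $\nabla^2_\bx$ by $\nabla^2_\by$, discard the normal-derivative part of the Hessian because $\nabla_{\bbS}Y_n^m\wedge\bnu$ is tangential, integrate by parts on $\bbS_R$, and invoke the identity $\nabla_{\bbS}\cdot(\nabla_{\bbS}Y_n^m\wedge\bnu)=0$ from Lemma~\ref{lem:1}. The careful tracking of which matrix slot is contracted with the density is a welcome addition, but the substance matches the paper's proof.
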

\begin{proof}
By using integration by parts, there holds
\[
\begin{split}
&\int_{\bbS_R} \nabla^2_\bx \Phi(\bx-\by)\cdot( \nabla_{\bbS}Y_n^m(\hat\by)\wedge \bnu_{\by}) ds=\int_{\bbS_R} \nabla^2_\by \Phi(\bx-\by)\cdot( \nabla_{\bbS}Y_n^m(\hat\by)\wedge \bnu_{\by}) ds\\
=&\frac{1}{R}\int_{\bbS_R} \nabla_{\bbS}(\nabla_\by \Phi(\bx-\by))\cdot( \nabla_{\bbS}Y_n^m(\hat\by)\wedge \bnu_{\by}) ds\\
=&-\int_{\bbS_R}\nabla_\by \Phi(\bx-\by)\nabla_{\bbS}\cdot( \nabla_{\bbS}Y_n^m(\hat\by)\wedge \bnu_{\by}) ds=0,
\end{split}
\]
where the last identity follows from Lemma \ref{lem:1} and this completes the proof.
\end{proof}

With the above preparations, we are in a position to derive the spectral system of the single-layer potential operator $ \bS_{\bbS_R}^{\omega}$. To that end, we first show the following result about the single-layer potentials $ \bS_{\bbS_R}^{\omega}[\Tcal_n^m]$, $ \bS_{\bbS_R}^{\omega}[\Ical_{n-1}^m]$ and $  \bS_{\bbS_R}^{\omega}[\Ncal_{n+1}^m]$, which shall be critical to our subsequent analysis.

\begin{thm}\label{thm:singleo}
The single-layer potentials associated with the density functions $\Tcal_n^m$, $\Ical_{n-1}^m$ and $ \Ncal_{n+1}^m$ are given as follows for $\bx\in\mathbb{R}^3\backslash B_R$,
\[
\begin{split}
  \bS_{\bbS_R}^{\omega}[ \Ical_{n-1}^m](\bx) =&  -R^2 \rmi \left( \frac{ (n+1)k_s j_{n-1,s} h_{n-1}(k_s |\bx|) }{\mu(2n+1)}  +  \frac{ nk_p j_{n-1,p} h_{n-1}(k_p |\bx|)}{(\lambda+2\mu)(2n+1)}   \right) \Ical_{n-1}^m \\
       &- n R^2\rmi  \left( \frac{k_s j_{n-1,s} h_{n+1}(k_s |\bx|) }{\mu(2n+1)}  -   \frac{k_p j_{n-1,p} h_{n+1}(k_p |\bx|) }{(\lambda+2\mu)(2n+1)}  \right) \Ncal_{n+1}^m,
 \end{split}
\]

\[
\begin{split}
  \bS_{\bbS_R}^{\omega}[ \Ncal_{n+1}^m](\bx) =&  - (n+1) R^2\rmi  \left( \frac{ k_s j_{n+1,s} h_{n-1}(k_s |\bx|)}{\mu(2n+1)}  -  \frac{ k_p j_{n+1,p} h_{n-1}(k_p |\bx|) }{(\lambda+2\mu)(2n+1)}   \right) \Ical_{n-1}^m \\
       &-R^2 \rmi \left( \frac{ n k_s j_{n+1,s} h_{n+1}(k_s |\bx|) }{\mu(2n+1)}  +  \frac{ (n+1) k_p j_{n+1,p} h_{n+1}(k_p |\bx|)}{(\lambda+2\mu)(2n+1)}  \right) \Ncal_{n+1}^m,
 \end{split}
\]
and
\[
 \bS_{\bbS_R}^{\omega}[\Tcal_n^m](\bx) = -\frac{\rmi k_s R^2 j_{n,s} h_n(k_s|\bx|)}{\mu}\Tcal_n^m,
\]
where and also in what follows, we denote $j_{n}(k_s R)$, $j_{n}(k_p R)$, $h_{n}(k_s R)$ and $h_{n}(k_p R)$ by  $j_{n,s}$, $j_{n,p}$, $h_{n,s}$ and $h_{n,p}$ for simplicity.
\end{thm}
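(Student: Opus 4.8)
The plan is to compute each of the three single-layer potentials by decomposing the fundamental solution $\bGa^\omega$ via \eqref{eq:detwo} and handling $\bGa_1^\omega$ and $\bGa_2^\omega$ separately. For the first piece $\bGa_1^\omega = \Phi\,\bm{\delta}_{ij}/\mu$, the action on any vector density is just the scalar operator $S_{\bbS_R}^{k_s}$ applied componentwise, so Lemma~\ref{lem:sk} together with the expansions \eqref{eq:coa}--\eqref{eq:coc} of $\Ical_{n-1}^m$, $\Ncal_{n+1}^m$ in scalar harmonics of orders $n-1$ and $n+1$ gives immediately that $\bS_{\bbS_R}^\omega$ via $\bGa_1^\omega$ sends $\Ical_{n-1}^m$ to $-\rmi k_s R^2 j_{n-1,s} h_{n-1}(k_s|\bx|)\Ical_{n-1}^m/\mu$, and similarly for $\Ncal_{n+1}^m$ and $\Tcal_n^m$. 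Since $\Tcal_n^m = \nabla_{\bbS}Y_n^m\wedge\bnu$ has no radial component and is built purely from order-$n$ harmonics, the second piece $\bGa_2^\omega$ contributes nothing to $\bS_{\bbS_R}^\omega[\Tcal_n^m]$ — this is exactly where Proposition~\ref{prusef02} is used — which yields the clean last formula.

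The substance is in $\bGa_2^\omega = \frac{1}{4\pi\omega^2}\nabla_\bx^2\big(\frac{e^{\rmi k_p|\bx|}-e^{\rmi k_s|\bx|}}{|\bx|}\big)$, i.e. a second-derivative (Hessian) acting on the difference of two scalar fundamental solutions scaled by $1/\omega^2$. The integral $\int_{\bbS_R}\bGa_2^\omega(\bx-\by)\,\bvarphi(\by)\,ds(\by)$ for $\bvarphi\in\{\Ical_{n-1}^m, \Ncal_{n+1}^m\}$ is precisely the object computed in Proposition~\ref{prusef01}: writing $\Ical_{n-1}^m = \nabla_{\bbS}Y_n^m + nY_n^m\bnu$ and $\Ncal_{n+1}^m = -\nabla_{\bbS}Y_n^m + (n+1)Y_n^m\bnu$, one feeds the two building-block densities $\nabla_{\bbS}Y_n^m$ and $Y_n^m\bnu$ into \eqref{eq:imptidt01} and \eqref{eq:imptidt02} with $k$ replaced first by $k_p$ and then by $k_s$, and takes the difference divided by $\omega^2$. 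The key simplification is that the bracketed coefficients in \eqref{eq:imptidt01}--\eqref{eq:imptidt02} are combinations of $j_{n\mp1}, j'_{n\mp1}, j''_{n\mp1}$ that, once one substitutes the Bessel ODE $j_\ell''(t) = -j_\ell(t) + \frac{\ell(\ell+1)}{t^2}j_\ell(t) - \frac{2}{t}j_\ell'(t)$ to eliminate second derivatives, and then the recurrence relations linking $j'_\ell$ to $j_{\ell-1}, j_{\ell+1}$, collapse dramatically. After these reductions the $k_p$-part and the $k_s$-part should each reorganize so that the prefactor $1/\omega^2$ cancels against a factor $k_p^2$ or $k_s^2$ (using $\omega^2 = (\lambda+2\mu)k_p^2 = \mu k_s^2$), producing the $1/\mu$ and $1/(\lambda+2\mu)$ denominators seen in the statement and leaving only $h_{n-1}$ and $h_{n+1}$ radial dependence with $j_{n-1,s}, j_{n-1,p}$ (resp.\ $j_{n+1,s}, j_{n+1,p}$) in the coefficients.

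Concretely the steps are: (1) split $\bS_{\bbS_R}^\omega = \bS^{(1)} + \bS^{(2)}$ along \eqref{eq:detwo}; (2) evaluate $\bS^{(1)}$ on each of $\Ical_{n-1}^m, \Ncal_{n+1}^m, \Tcal_n^m$ using Lemma~\ref{lem:sk} and the harmonic-order bookkeeping; (3) for $\bS^{(2)}$ on $\Tcal_n^m$, invoke Proposition~\ref{prusef02} to get zero; (4) for $\bS^{(2)}$ on $\Ical_{n-1}^m$ and $\Ncal_{n+1}^m$, expand via $\nabla_{\bbS}Y_n^m$ and $Y_n^m\bnu$, apply Proposition~\ref{prusef01} at wavenumbers $k_p$ and $k_s$, and form the $\omega^{-2}$-scaled difference; (5) use the spherical Bessel ODE and recurrences to eliminate $j''$ and $j'$ in favour of $j_{n-1}, j_{n+1}$, and use $\omega^2 = \mu k_s^2 = (\lambda+2\mu)k_p^2$ to clear the $1/\omega^2$; (6) collect the coefficients of $\Ical_{n-1}^m$ and $\Ncal_{n+1}^m$ and add the contribution from step (2). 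I expect the main obstacle to be step (5): verifying that the messy linear combinations of Bessel functions and their derivatives (with the order-dependent rational coefficients $\frac{n(n+1)}{2n+1}$, $\frac{n(n+1)(n\pm1)}{2n+1}$, $\frac{n\pm1}{2n+1}$, etc.) genuinely telescope after the ODE/recurrence substitutions so that the apparent $1/\omega^2$ singularity is removable and the final coefficients take the stated symmetric form. This is a finite but delicate computation in which sign errors and off-by-one shifts in the Bessel index are easy to make; organizing it by treating the $k_p$- and $k_s$-channels in parallel and tracking the $\Ical_{n-1}^m$ and $\Ncal_{n+1}^m$ components separately should keep it manageable.
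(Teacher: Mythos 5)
Your proposal follows essentially the same route as the paper: decompose $\bGa^\omega=\bGa_1^\omega+\bGa_2^\omega$ as in \eqref{eq:detwo}, handle $\bGa_1^\omega=\Phi\,\bm{\delta}_{ij}/\mu$ via Lemma~\ref{lem:sk} acting componentwise, kill the $\bGa_2^\omega$ contribution to $\Tcal_n^m$ by Proposition~\ref{prusef02}, and evaluate $\bGa_2^\omega$ on $\Ical_{n-1}^m$ and $\Ncal_{n+1}^m$ by feeding $\nabla_{\bbS}Y_n^m$ and $Y_n^m\bnu$ into Proposition~\ref{prusef01} at both wavenumbers $k_p,k_s$ and dividing by $\omega^2$. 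The paper's proof is a one-line pointer to exactly these ingredients plus ``straightforward though tedious calculations,'' and your step (5) — using the spherical Bessel ODE and recurrences together with $\omega^2=\mu k_s^2=(\lambda+2\mu)k_p^2$ to collapse the $j'',j'$ terms and remove the apparent $\omega^{-2}$ singularity — is a faithful description of what those tedious calculations amount to.
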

\begin{proof}	
The proof follows from the expression of the fundamental solution $\bGa^{\omega}$ defined in \eqref{eq:ef}, Lemma \ref{lem:sk}, and Propositions \ref{prusef01} and \ref{prusef02}, along with straightforward (though tedious) calculations.
\end{proof}

By a similar argument to Theorem \ref{thm:singleo}, one can show
\begin{prop}\label{pro:singlei}
For $\bx\in B_R$, the single-layer potentials $ \bS_{\bbS_R}^{\omega}[\Tcal_n^m]$, $ \bS_{\bbS_R}^{\omega}[\Ical_{n-1}^m]$ and $  \bS_{\bbS_R}^{\omega}[\Ncal_{n+1}^m]$  are given as follows
\[
\begin{split}
  \bS_{\bbS_R}^{\omega}[ \Ical_{n-1}^m](\bx) =&  -R^2 \rmi \left( \frac{ (n+1)k_s h_{n-1,s} j_{n-1}(k_s |\bx|) }{\mu(2n+1)}  +  \frac{ nk_p h_{n-1,p} j_{n-1}(k_p |\bx|)}{(\lambda+2\mu)(2n+1)}   \right) \Ical_{n-1}^m \\
       &- n R^2\rmi  \left( \frac{k_s h_{n-1,s} j_{n+1}(k_s |\bx|) }{\mu(2n+1)}  -   \frac{k_p h_{n-1,p} j_{n+1}(k_p |\bx|) }{(\lambda+2\mu)(2n+1)}  \right) \Ncal_{n+1}^m,
 \end{split}
\]

\[
\begin{split}
  \bS_{\bbS_R}^{\omega}[ \Ncal_{n+1}^m](\bx) =&  - (n+1) R^2\rmi  \left( \frac{ k_s h_{n+1,s} j_{n-1}(k_s |\bx|)}{\mu(2n+1)}  -  \frac{ k_p h_{n+1,p} j_{n-1}(k_p |\bx|) }{(\lambda+2\mu)(2n+1)}   \right) \Ical_{n-1}^m \\
       &-R^2 \rmi \left( \frac{ n k_s h_{n+1,s} j_{n+1}(k_s |\bx|) }{\mu(2n+1)}  +  \frac{ (n+1) k_p h_{n+1,p} j_{n+1}(k_p |\bx|)}{(\lambda+2\mu)(2n+1)}  \right) \Ncal_{n+1}^m,
 \end{split}
\]
and
\[
 \bS_{\bbS_R}^{\omega}[\Tcal_n^m](\bx) = -\frac{\rmi k_s R^2 h_{n,s} j_n(k_s|\bx|)}{\mu}\Tcal_n^m.
\]

\end{prop}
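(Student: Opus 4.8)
The plan is to mirror, almost verbatim, the argument already used for Theorem~\ref{thm:singleo}, changing only the role of the interior versus exterior behaviour of the radial functions. Recall that the single-layer potential $\bS_{\bbS_R}^\omega[\bvarphi](\bx)=\int_{\bbS_R}\bGa^\omega(\bx-\by)\bvarphi(\by)\,ds(\by)$ splits, via \eqref{eq:detwo}, into a scalar Helmholtz part $\bGa_1^\omega=\Phi\,\bm{\delta}_{ij}/\mu$ built on the wavenumber $k_s$, and a second part $\bGa_2^\omega=(4\pi\omega^2)^{-1}\partial_i\partial_j\big(e^{\rmi k_p|\bx|}-e^{\rmi k_s|\bx|}\big)/|\bx|$, which is exactly $(4\pi\omega^2)^{-1}\nabla^2(\Phi_{k_p}-\Phi_{k_s})$ up to the normalisation of $\Phi$. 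Hence, when acting on the densities $\Ical_{n-1}^m$, $\Ncal_{n+1}^m$, $\Tcal_n^m$, the contribution of $\bGa_1^\omega$ is governed by Lemma~\ref{lem:sk} and that of $\bGa_2^\omega$ by Propositions~\ref{prusef01} and~\ref{prusef02}; the only difference from Theorem~\ref{thm:singleo} is that for $\bx\in B_R$ one uses the interior expansions
\[
 S_{\bbS_R}^{k}[Y_n^m](\bx)=-\rmi k R^2 j_n(k|\bx|)h_n(kR)Y_n^m,
\]
and the expansion $\Phi(\bx-\by)=-\rmi k\sum_{n,m}j_n(k|\bx|)Y_n^m(\hat\bx)h_n(k|\by|)\overline{Y}_n^m(\hat\by)$ valid now for $|\bx|<|\by|=R$, i.e.\ the roles of $j_n$ and $h_n$ in the radial arguments are swapped relative to the exterior case.

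Concretely, first I would re-derive the interior analogues of \eqref{eq:imptidt01}–\eqref{eq:imptidt03}: repeating the integration-by-parts computations in the proofs of Propositions~\ref{prusef01} and~\ref{prusef02} verbatim — these only use Lemmas~\ref{lem:1},~\ref{lem:2} and Propositions~\ref{pr1},~\ref{pr2} on the $\by$-sphere, so they are insensitive to whether $\bx$ is inside or outside — and then substituting the interior series for $\nabla_\by\Phi$ and $\p_{\bnu_\by}\nabla_\by\Phi$, which produces the same coefficient structure with $h_{n\mp1}(k|\bx|)$ replaced by $j_{n\mp1}(k|\bx|)$ and $j_{n\mp1}(kR)$, $j_{n\mp1}'(kR)$, $j_{n\mp1}''(kR)$ replaced by $h_{n\mp1}(kR)$, $h_{n\mp1}'(kR)$, $h_{n\mp1}''(kR)$. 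Second, I would assemble the three single-layer potentials: the $\bGa_1^\omega$-part contributes $-\rmi k_s R^2 h_{n,s}j_n(k_s|\bx|)/\mu$ times the same density (using Lemma~\ref{lem:sk} interiorly and the facts $\Tcal_n^m=\nabla_\bbS Y_n^m\wedge\bnu$, $\Ical_{n-1}^m=\nabla_\bbS Y_n^m+nY_n^m\bnu$, $\Ncal_{n+1}^m=-\nabla_\bbS Y_n^m+(n+1)Y_n^m\bnu$), while the $\bGa_2^\omega$-part feeds the interior versions of \eqref{eq:imptidt01},~\eqref{eq:imptidt02} through the combination $\nabla^2(\Phi_{k_p}-\Phi_{k_s})$, and the derivative identities for spherical Bessel/Hankel functions (e.g.\ $t^2 f_n''+2tf_n'+(t^2-n(n+1))f_n=0$) collapse the bracketed coefficients into the clean $k_s j$/$k_p j$ form displayed in the statement — now with $j$'s evaluated at $|\bx|$ and $h$'s at $R$.

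The bookkeeping step — collecting the $\Ical_{n-1}^m$ and $\Ncal_{n+1}^m$ components and checking that the coefficients simplify exactly as written — is the only place that demands care, but it is entirely parallel to the (already omitted) computation behind Theorem~\ref{thm:singleo}; the structural symmetry $j_n\leftrightarrow h_n$ between interior and exterior traces of the Helmholtz single-layer potential is precisely what makes the two results dual. Thus I expect the main (and essentially the only) obstacle to be the sign- and index-careful manipulation of the spherical Bessel function recurrences when reducing the $j_{n\mp1}''$, $j_{n\mp1}'$ terms; once that reduction is in hand the proof is immediate. For this reason the statement is framed as a proposition proved "by a similar argument to Theorem~\ref{thm:singleo}", and I would simply note that one repeats that proof with the interior expansions of Lemma~\ref{lem:sk} and of $\Phi(\bx-\by)$ in place of the exterior ones.
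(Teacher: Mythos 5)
Your proposal is correct and takes essentially the same approach as the paper, which simply asserts the result ``by a similar argument to Theorem~\ref{thm:singleo}.'' You correctly identify the only change needed: for $\bx\in B_R$ one replaces the exterior addition theorem for $\Phi$ (valid for $|\by|<|\bx|$) by the interior one (valid for $|\bx|<|\by|=R$), which exchanges the roles of $j_n$ and $h_n$ in the radial variables, and the integration-by-parts arguments underlying Propositions~\ref{prusef01}--\ref{prusef02} act only on the $\by$-sphere and hence carry over unchanged.
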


From Theorem \ref{thm:singleo} and the continuity of the single layer potential operator $\bS_{\bbS_R}^{\omega}$ from $\bx\in\mathbb{R}^3\backslash B_R$ to $\bx\in\mathbb{S}_R$, one can conclude that for $\bx\in\bbS_R$
\begin{equation}\label{eq:sint}
\bS_{\bbS_R}^{\omega}[\Tcal_n^m](\bx) = b_n\Tcal_n^m,\quad
 \bS_{\bbS_R}^{\omega}[ \Ical_{n-1}^m](\bx) = c_{1n} \Ical_{n-1}^m + d_{1n} \Ncal_{n+1}^m,
\end{equation}
and
\begin{equation}
 \bS_{\bbS_R}^{\omega}[ \Ncal_{n+1}^m](\bx) = c_{2n} \Ical_{n-1}^m + d_{2n}\Ncal_{n+1}^m.
\end{equation}
where
\[
\begin{split}
b_n=&  -\frac{\rmi k_s R^2 j_{n,s} h_{n,s}}{\mu},\\
c_{1n}=& -R^2 \rmi \left( \frac{j_{n-1,s} h_{n-1,s} k_s (n+1)}{\mu(2n+1)}  +  \frac{j_{n-1,p} h_{n-1,p} k_p n}{(\lambda+2\mu)(2n+1)}   \right),\\
d_{1n}=& - n R^2\rmi  \left( \frac{j_{n-1,s} h_{n+1,s} k_s}{\mu(2n+1)}  -   \frac{j_{n-1,p} h_{n+1,p} k_p }{(\lambda+2\mu)(2n+1)}  \right),\\
c_{2n}=& - (n+1) R^2\rmi  \left( \frac{j_{n+1,s} h_{n-1,s} k_s}{\mu(2n+1)}  -  \frac{j_{n+1,p} h_{n-1,p} k_p }{(\lambda+2\mu)(2n+1)}   \right),\\
d_{2n}=&  -R^2 \rmi \left( \frac{j_{n+1,s} h_{n+1,s} k_s n}{\mu(2n+1)}  +  \frac{j_{n+1,p} h_{n+1,p} k_p (n+1)}{(\lambda+2\mu)(2n+1)}  \right).
\end{split}
\]

The rest of the section is devoted to the derivation of the traction of the single layer potential on the $\mathbb{S}_R$, based on which, we can derive the spectral system of the N-P operator. First of all, we deduce the following two propositions.
\begin{prop}\label{pro:tra1}
 The following identities hold for $n,p\in\mathbb{N}_0$:
 \[
  \nabla\cdot \left( h_n(k|\bx|)\nabla_{\bbS}Y_p^m \right) =-p(p+1)h_n(k|\bx|)Y_p^m/|\bx|,
 \]
 \[
  \nabla\cdot \left( h_n(k|\bx|)Y_p^m\bnu \right) =(k h_n^{\prime}(k|\bx|)  + 2 h_n(k|\bx|)/|\bx| )Y_p^m,
 \]
 and
 \[
   \nabla\cdot \left( h_n(k|\bx|)\nabla_{\bbS}Y_{p}^m\wedge\bnu \right)=0.
 \]
\end{prop}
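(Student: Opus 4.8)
The plan is to compute each divergence directly in spherical coordinates, decomposing the gradient and divergence operators into their radial and spherical (tangential) parts. Recall that for a scalar $g$ depending only on $r=|\bx|$ and an angular function, one has $\nabla(g(r)Y_p^m(\hat\bx)) = g'(r)Y_p^m\bnu + g(r)\nabla_{\bbS}Y_p^m/r$, and for a vector field written as $v_r(r)\bnu + \bw_t/r$ with $\bw_t$ tangential, the divergence splits as $\nabla\cdot\bV = \partial_r v_r + \frac{2}{r}v_r + \frac{1}{r}\nabla_{\bbS}\cdot\bw_t$ (the factor $2/r$ coming from the mean curvature of the sphere, equivalently from the $r^2$ Jacobian). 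These are standard identities and I would either quote them or record them in one line.

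For the first identity, I would write $h_n(k|\bx|)\nabla_{\bbS}Y_p^m = h_n(k r)\,\bw_t/r$ with $\bw_t = r\nabla_{\bbS}Y_p^m$; since $\nabla_{\bbS}Y_p^m$ is purely tangential with no radial component, only the tangential term in the divergence survives, giving $\frac{1}{r}\nabla_{\bbS}\cdot(h_n(kr)\nabla_{\bbS}Y_p^m) = \frac{h_n(kr)}{r}\triangle_{\bbS}Y_p^m = -\frac{p(p+1)}{r}h_n(kr)Y_p^m$, where I invoke Lemma~\ref{lem:2} (and Lemma~\ref{lem:1} for $\triangle_{\bbS}=\nabla_{\bbS}\cdot\nabla_{\bbS}$). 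For the second identity, $h_n(k|\bx|)Y_p^m\bnu$ is purely radial with $v_r = h_n(kr)Y_p^m$, so $\nabla\cdot = \big(k h_n'(kr) + \frac{2}{r}h_n(kr)\big)Y_p^m$, which is exactly the stated formula. For the third identity, $\nabla_{\bbS}Y_p^m\wedge\bnu$ is again purely tangential, so the divergence reduces to $\frac{1}{r}\nabla_{\bbS}\cdot\big(h_n(kr)\nabla_{\bbS}Y_p^m\wedge\bnu\big)$; pulling the radial scalar $h_n(kr)$ out of the surface divergence and applying the first relation in Lemma~\ref{lem:1}, namely $\nabla_{\bbS}\cdot(\nabla_{\bbS}Y_p^m\wedge\bnu)=0$, kills this term entirely.

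The main (and essentially only) obstacle is purely bookkeeping: being careful about where the factors of $r$ and $1/r$ live when splitting a 3D field into radial plus tangential parts, and making sure the $\frac{2}{r}$ surface term is correctly accounted for. There is no conceptual difficulty—the heavy lifting is done by Lemmas~\ref{lem:1} and \ref{lem:2}, which reduce the surface-divergence pieces to multiplication by $-p(p+1)$ or to zero. I would present the computation compactly, organizing it as the three cases above and citing the spherical-coordinate splitting of $\nabla$ and $\nabla\cdot$ once at the outset.
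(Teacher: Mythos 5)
Your proof is correct and takes essentially the same route as the paper: both rest on the two facts that the gradient of a purely radial function is orthogonal to any tangential field, and that the ambient divergence of a purely tangential field on $\bbS_r$ equals $\tfrac{1}{r}\nabla_{\bbS}\cdot$ of it (the paper packages this via the product rule $\nabla\cdot(f\bw)=\nabla f\cdot\bw+f\,\nabla\cdot\bw$ together with Lemmas~\ref{lem:1} and~\ref{lem:2}, while you invoke the spherical splitting of $\nabla\cdot$ directly). One small bookkeeping slip in your preamble: for $\bV=v_r\bnu+\bw_t/r$ with $\bw_t$ tangential, the last term of the divergence should be $\tfrac{1}{r^2}\nabla_{\bbS}\cdot\bw_t$, not $\tfrac{1}{r}\nabla_{\bbS}\cdot\bw_t$; equivalently, state the formula for $\bV=v_r\bnu+\bV_\perp$ as $\nabla\cdot\bV=\partial_r v_r+\tfrac{2}{r}v_r+\tfrac{1}{r}\nabla_{\bbS}\cdot\bV_\perp$. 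Your case-by-case computations already use this corrected form (you write $\tfrac{1}{r}\nabla_{\bbS}\cdot(h_n\nabla_{\bbS}Y_p^m)$ with the $h_n\nabla_{\bbS}Y_p^m$ unscaled), so the argument is sound as executed; just fix the stated general identity so that it matches what you apply.
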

\begin{proof}
By the vector calculus identity, one has that
\[
 \begin{split}
 & \nabla\cdot \left( h_n(k|\bx|)\nabla_{\bbS}Y_p^m \right)=\nabla h_n(k|\bx|)\cdot \nabla_{\bbS}Y_p^m + h_n(k|\bx|)  \nabla\cdot  \nabla_{\bbS}Y_p^m \\
  =& h_n(k|\bx|) \triangle_{\bbS}Y_p^m/|\bx| = -p(p+1)h_n(k|\bx|)Y_p^m /|\bx|,
 \end{split}
\]
where the last two identities follow from Lemmas \ref{lem:1} and \ref{lem:2}. Therefore, one can show the first identity of the proposition. The other two identities of the proposition can be shown in a similar manner.
\end{proof}

\begin{prop}\label{pro:tra2}
The following identities hold for $n,p\in\mathbb{N}_0$:
\[
\begin{split}
 \nabla\left( h_n(k|\bx|)\nabla_{\bbS}Y_p^m \right)\bnu = &k h_n^{\prime}(k|\bx|) \nabla_{\bbS}Y_p^m,\\
 \nabla\left( h_n(k|\bx|)\nabla_{\bbS}Y_p^m \right)^T \bnu =& -h_n(k|\bx|) \nabla_{\bbS}Y_p^m/|\bx|,\\
 \nabla\left( h_n(k|\bx|)Y_p^m\bnu \right)\bnu =& k h_n^{\prime}(k|\bx|) Y_p^m\bnu,\\
 \nabla\left( h_n(k|\bx|)Y_p^m\bnu \right)^T \bnu =&k h_n^{\prime}(k|\bx|)Y_p^m\bnu  + h_n(k|\bx|)/|\bx| \nabla_{\bbS}Y_p^m,\\
 \nabla\left( h_n(k|\bx|)\nabla_{\bbS}Y_{p}^m\wedge\bnu \right)\bnu =& k h_n^{\prime}(k|\bx|) \nabla_{\bbS}Y_{p}^m\wedge\bnu,\\
 \nabla\left( h_n(k|\bx|)\nabla_{\bbS}Y_{p}^m\wedge\bnu \right)^T \bnu =& -h_n(k|\bx|)/|\bx| \nabla_{\bbS}Y_{p}^m\wedge\bnu.
\end{split}
\]
\end{prop}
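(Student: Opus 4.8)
The plan is to establish each of the six identities in Proposition~\ref{pro:tra2} by a direct computation in spherical coordinates, exploiting the product structure of the vector fields involved. For any smooth scalar $g=g(|\bx|)$ (here $g=h_n(k|\bx|)$) and any fixed tangential field $\bt=\bt(\hat\bx)$ that is homogeneous of degree zero (such as $\nabla_{\bbS}Y_p^m$ or $\nabla_{\bbS}Y_p^m\wedge\bnu$), I would first record the elementary facts $\nabla g(|\bx|)=g'(|\bx|)\bnu$ (with $\bnu=\bx/|\bx|$), $\nabla\bnu=(\bI-\bnu\otimes\bnu)/|\bx|$, and that the angular part $\bt$ has no radial derivative so that $\bnu\cdot\nabla\bt=0$, while $\bt\cdot\bnu=0$ since $\bt$ is tangential. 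For the radial field $Y_p^m\bnu$ I additionally need that the surface gradient of $Y_p^m$ as a function on $\bbS$ produces the extra term $\nabla(Y_p^m(\hat\bx))=\nabla_{\bbS}Y_p^m/|\bx|$ when $Y_p^m$ is regarded as degree-zero homogeneous.

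Next I would treat the three fields one at a time. For $\bw=h_n(k|\bx|)\nabla_{\bbS}Y_p^m$, write $\nabla\bw=k h_n'(k|\bx|)\,\bnu\otimes\nabla_{\bbS}Y_p^m+h_n(k|\bx|)\,\nabla(\nabla_{\bbS}Y_p^m)$; applying this to $\bnu$ and using $\bnu\cdot\nabla(\nabla_{\bbS}Y_p^m)=0$ gives the first identity $\nabla\bw\,\bnu=k h_n'(k|\bx|)\nabla_{\bbS}Y_p^m$, while applying the transpose to $\bnu$ picks out $\nabla_{\bbS}Y_p^m(\bnu\cdot\bnu)$ from the first summand — but that vanishes because $\nabla_{\bbS}Y_p^m\perp\bnu$ — and from the second summand one gets $h_n(k|\bx|)(\nabla(\nabla_{\bbS}Y_p^m))^T\bnu$, which by the homogeneity computation equals $-h_n(k|\bx|)\nabla_{\bbS}Y_p^m/|\bx|$, yielding the second identity. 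The same bookkeeping applied to $Y_p^m\bnu$ gives the third and fourth identities, the asymmetry between them coming precisely from the $\nabla(Y_p^m(\hat\bx))=\nabla_{\bbS}Y_p^m/|\bx|$ term that survives only in the transpose. Finally, since $\nabla_{\bbS}Y_p^m\wedge\bnu$ is again a degree-zero homogeneous tangential field, the computation for $\bw=h_n(k|\bx|)\nabla_{\bbS}Y_p^m\wedge\bnu$ is structurally identical to the first case and produces the last two identities; here I would only need to know that $\bnu\cdot\nabla(\nabla_{\bbS}Y_p^m\wedge\bnu)=0$ and that $(\nabla(\nabla_{\bbS}Y_p^m\wedge\bnu))^T\bnu=-(\nabla_{\bbS}Y_p^m\wedge\bnu)/|\bx|$, both of which follow from the general degree-zero tangential-field identities.

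The only genuinely delicate point — and the one I would be most careful about — is correctly accounting for the radial-differentiation versus angular-differentiation split, i.e. keeping track of which occurrences of $Y_p^m$ are being treated as functions on $\bbS$ (contributing $\nabla_{\bbS}Y_p^m/|\bx|$) and which are swept into the radial profile. The cleanest way to make this rigorous is to fix an orthonormal moving frame $\{\bnu,\be_\theta,\be_\phi\}$, expand each field in that frame with coefficients that are products of a radial factor and an angular factor, and use the known frame derivatives $\partial_r\be_i=0$, together with $\nabla_{\bbS}$ acting only on the angular factors; then each of the six identities reduces to matching the $\be_i$-components on both sides. I would present the $\bw=h_n(k|\bx|)\nabla_{\bbS}Y_p^m$ case in this detail and remark that the remaining five are obtained by the same mechanism, since the excerpt already flags the computations as ``straightforward (though tedious).''
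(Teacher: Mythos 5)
Your proposal is correct and follows the same route as the paper: product rule, the degree-zero homogeneity of the angular factor (so $(\bnu\cdot\nabla)$ annihilates it), tangentiality, and the Hessian identity for $\nabla\nabla Y_p^m$ to handle the transposed cases. One small slip to fix when writing it out: with the paper's convention $(\nabla\bu)_{ij}=\partial_j u_i$, the rank-one term in $\nabla\bw$ is $kh_n'\,\nabla_{\bbS}Y_p^m\otimes\bnu$ (i.e.\ $\nabla_{\bbS}Y_p^m(\nabla h_n)^T$), not $kh_n'\,\bnu\otimes\nabla_{\bbS}Y_p^m$ as you wrote -- with your ordering, right-multiplying by $\bnu$ annihilates that term rather than yielding $kh_n'\nabla_{\bbS}Y_p^m$, and the quantity that actually vanishes in your transpose computation is $\bnu(\nabla_{\bbS}Y_p^m\cdot\bnu)$, not $\nabla_{\bbS}Y_p^m(\bnu\cdot\bnu)$.
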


\begin{proof}
In the following, we only give the proof of the first two identities and the other ones can be proved in a similar manner. First, one has
\[
\begin{split}
 \nabla\left( h_n(k|\bx|)\nabla_{\bbS}Y_p^m \right)\bnu&=(\nabla_{\bbS}Y_p^m \nabla h_n(k|\bx|)^T + h_n(k|\bx|)\nabla \nabla_{\bbS}Y_p^m)\bnu  = k h_n^{\prime}(k|\bx|) \nabla_{\bbS}Y_p^m,
\end{split}
\]
where the last identity follows from the following fact
\begin{equation}\label{eq:or1}
 (\nabla \nabla_{\bbS}Y_p^m)\bnu=\left(\frac{1}{|\bx|}\nabla_{\bbS} \nabla_{\bbS}Y_p^m\right)\bnu =0.
\end{equation}
Noting the symmetry of $ \nabla \nabla Y_p^m$ and rewriting \eqref{eq:or1} as
\[
 (\nabla \nabla_{\bbS}Y_p^m)\bnu= \left(\nabla \left(|\bx| \nabla Y_p^m\right) \right) \bnu =  \left(|\bx|\nabla \nabla Y_p^m +  \nabla_{\bbS}Y_p^m\bnu^T \right) \bnu =0,
\]
one can obtain that
\begin{equation}\label{eq:or2}
 \nabla \nabla Y_p^m \bnu =  \left(\nabla \nabla Y_p^m\right)^T \bnu =- \nabla_{\bbS}Y_p^m/|\bx|.
\end{equation}
Similarly one has that
\[
\begin{split}
 & \nabla\left( h_n(k|\bx|)\nabla_{\bbS}Y_p^m \right)^T\bnu=\left(\nabla h_n(k|\bx|) (\nabla_{\bbS}Y_p^m)^T + h_n(k|\bx|) (\nabla \nabla_{\bbS}Y_p^m)^T \right)\bnu \\
 =& h_n(k|\bx|)  \left( \left(\nabla \nabla Y_p^m\right)^T + \bnu\left( \nabla_{\bbS}Y_p^m\right)^T \right) \bnu = -h_n(k|\bx|) \nabla_{\bbS}Y_p^m/|\bx|,
\end{split}
\]
where the last identity follows from \eqref{eq:or2}. Hence, we have shown the first two identities.

The proof is complete.
\end{proof}

Next, we derive the tractions of the single-layer potentials $ \bS_{\bbS_R}^{\omega}[\Tcal_n^m]$, $ \bS_{\bbS_R}^{\omega}[\Ical_{n-1}^m]$ and $  \bS_{\bbS_R}^{\omega}[\Ncal_{n+1}^m]$ on $\bbS_R$.

\begin{prop}\label{pro:trac}
The traction of the single layer potentials $ \bS_{\bbS_R}^{\omega}[\Tcal_n^m]$, $ \bS_{\bbS_R}^{\omega}[\Ical_{n-1}^m]$ and $  \bS_{\bbS_R}^{\omega}[\Ncal_{n+1}^m]$ on $\bbS_R$ satisfy
\begin{eqnarray}
\partial_{\bnu}  \bS_{\bbS_R}^{\omega}[\Tcal_n^m](\bx) & = & \mathfrak{b}_n\Tcal_n^m,\label{eq:tract}\\
\partial_{\bnu}  \bS_{\bbS_R}^{\omega}[ \Ical_{n-1}^m] |_{+}(\bx) & = & \mathfrak{c}_{1n} \Ical_{n-1}^m +  \mathfrak{d}_{1n} \Ncal_{n+1}^m,\label{eq:traci}\\
\partial_{\bnu}  \bS_{\bbS_R}^{\omega}[ \Ncal_{n+1}^m] |_{+}(\bx) & = & \mathfrak{c}_{2n} \Ical_{n-1}^m +  \mathfrak{d}_{2n} \Ncal_{n+1}^m,\label{eq:tracn}
\end{eqnarray}
where
\begin{equation}\label{eq:cotrac}
\begin{split}
 \mathfrak{b}_n=&  -\rmi k_s R j_{n,s} (k_sR h_{n,s}^{\prime} - h_{n,s}), \\
 \mathfrak{c}_{1n}=& -2(n-1)R\rmi  \left( \frac{j_{n-1}(k_s R) h_{n-1}(k_s R) k_s (n+1)}{2n+1}  +  \frac{j_{n-1}(k_p R) h_{n-1}(k_p R) k_p\mu n}{(\lambda+2\mu)(2n+1)}   \right)\\
         & + R^2\rmi  \left( \frac{ j_{n-1}(k_s R) h_{n}(k_s R) k_s^2  (n+1) + j_{n-1}(k_p R) h_{n}(k_p R) k_p^2 n}{2n+1}   \right), \\
\mathfrak{d}_{1n}=&  2n(n+2) R\rmi  \left( \frac{j_{n-1}(k_s R) h_{n+1}(k_s R) k_s}{2n+1}  -   \frac{j_{n-1}(k_p R) h_{n+1}(k_p R) k_p\mu }{(\lambda+2\mu)(2n+1)}  \right)\\
        & +  n R^2\rmi  \left( \frac{ -j_{n-1}(k_s R) h_{n}(k_s R) k_s^2  + j_{n-1}(k_p R) h_{n}(k_p R) k_p^2}{2n+1}   \right), \\
\mathfrak{c}_{2n}=& - 2(n^2-1) R \rmi  \left( \frac{j_{n+1}(k_s R) h_{n-1}(k_s R) k_s}{2n+1}  -  \frac{j_{n+1}(k_p R) h_{n-1}(k_p R) k_p\mu }{(\lambda+2\mu)(2n+1)}   \right)\\
         & -(n+1) R^2\rmi  \left( \frac{ -j_{n-1}(k_s R) h_{n}(k_s R) k_s^2   + j_{n-1}(k_p R) h_{n}(k_p R) k_p^2 }{2n+1}   \right), \\
\mathfrak{d}_{2n}=&  2(n+2)R \rmi  \left( \frac{j_{n+1}(k_s R) h_{n+1}(k_s R) k_s n}{(2n+1)}  +  \frac{j_{n+1}(k_p R) h_{n+1}(k_p R) k_p \mu (n+1)}{(\lambda+2\mu)(2n+1)}  \right)\\
         & - R^2\rmi  \left( \frac{ j_{n+1}(k_s R) h_{n}(k_s R) k_s^2 n  + j_{n+1}(k_p R) h_{n}(k_p R) k_p^2(n+1) }{2n+1}   \right).
\end{split}
\end{equation}
\end{prop}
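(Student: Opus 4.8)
\textbf{Proof plan for Proposition~\ref{pro:trac}.}
The plan is to compute the traction $\partial_{\bnu}\bS_{\bbS_R}^{\omega}[\cdot]$ by applying the conormal derivative operator \eqref{eq:trac} directly to the explicit exterior representations of the single-layer potentials given in Theorem~\ref{thm:singleo}, and then evaluating on $\bbS_R$ from the $+$ side. Since Theorem~\ref{thm:singleo} expresses each of $\bS_{\bbS_R}^{\omega}[\Tcal_n^m]$, $\bS_{\bbS_R}^{\omega}[\Ical_{n-1}^m]$ and $\bS_{\bbS_R}^{\omega}[\Ncal_{n+1}^m]$ as a finite linear combination of terms of the form $h_\ell(k|\bx|)\,\nabla_{\bbS}Y_p^m$, $h_\ell(k|\bx|)\,Y_p^m\bnu$ and $h_\ell(k|\bx|)\,\nabla_{\bbS}Y_p^m\wedge\bnu$ (with $k\in\{k_s,k_p\}$ and $\ell,p\in\{n-1,n+1\}$), the whole computation reduces to knowing how $\di(\cdot)$ and $\nabla(\cdot)\bnu$, $\nabla(\cdot)^T\bnu$ act on these three building blocks. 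That information is exactly the content of Propositions~\ref{pro:tra1} and~\ref{pro:tra2}.

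First I would write $\partial_{\bnu}\bw = \hat\lambda(\di\bw)\bnu + 2\hat\mu(\nabla^s\bw)\bnu$ — here with the background parameters $(\lambda,\mu)$ since $\bx\in\RR^3\backslash B_R$ — and expand $(\nabla^s\bw)\bnu = \tfrac12\big(\nabla\bw + (\nabla\bw)^T\big)\bnu$, so that each scalar coefficient appearing in the traction is assembled from: (i) the surface-divergence formulas of Proposition~\ref{pro:tra1}, which produce the terms proportional to $p(p+1)h_\ell$, to $k h_\ell' + 2h_\ell/|\bx|$, and the vanishing of the $\Tcal$-type contribution to the divergence; and (ii) the six $\nabla(\cdot)\bnu$ / $\nabla(\cdot)^T\bnu$ formulas of Proposition~\ref{pro:tra2}, which produce the terms proportional to $k h_\ell'$ and to $h_\ell/|\bx|$. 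Then I would set $|\bx|=R$, substitute $k_s^2=\omega^2/\mu$ and $k_p^2=\omega^2/(\lambda+2\mu)$, use $\lambda+2\mu = \omega^2/k_p^2$ and $\mu=\omega^2/k_s^2$ to clear the Lam\'e parameters in favour of $k_s,k_p$, and collect the coefficient of each of $\Ical_{n-1}^m$, $\Ncal_{n+1}^m$ (for the two $\bS^{\omega}[\Ical]$, $\bS^{\omega}[\Ncal]$ identities) and of $\Tcal_n^m$ (for the $\bS^{\omega}[\Tcal]$ identity). A derivative-reduction identity of the type $j_{\ell}''(t) = -j_\ell(t) - \tfrac2t j_\ell'(t) + \tfrac{\ell(\ell+1)}{t^2}j_\ell(t)$ (Bessel's equation), and likewise for $h_\ell$, will be needed to express the second-derivative terms from \eqref{eq:normdirG01}-type contributions back in terms of $j_\ell,j_\ell',h_\ell$ so that the stated forms — which contain only $j_\ell h_{\ell'}$ products and first derivatives folded into $h_n$ via $k_sRh_{n,s}' - h_{n,s}$ — emerge.

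The main obstacle is purely organizational rather than conceptual: the bookkeeping of the many terms. Each of $c_{1n},d_{1n},c_{2n},d_{2n}$ from Theorem~\ref{thm:singleo} feeds, through two independent differentiations (one via $\di$, one via $\nabla^s$ applied componentwise to a vector field that is itself a product $h_\ell\times$(vector spherical harmonic)), into both the $\Ical_{n-1}^m$ and $\Ncal_{n+1}^m$ channels, so one must carefully track which of the $h_{n-1},h_n,h_{n+1}$ arguments and which of the $(n-1),n,(n+1),(n+2)$ prefactors land where, and ensure the $\nabla^s$-induced cross terms (the $h_\ell/|\bx|$ pieces, e.g. from $\nabla(h_nY_p^m\bnu)^T\bnu$ which contributes to $\nabla_{\bbS}Y_p^m$) are added with the correct sign. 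I would organize this as a small table, one row per building block and one column per output channel, then assemble \eqref{eq:tract}--\eqref{eq:tracn} by linearity; the factors $2(n-1)$, $2n(n+2)$, $2(n^2-1)$, $2(n+2)$ in \eqref{eq:cotrac} are precisely what results after combining the $p(p+1)$ from the divergence with the $1/(2n+1)$ normalization and the $-n$ or $n+1$ scalar weights carried by $\Ical_{n-1}^m$ and $\Ncal_{n+1}^m$ from Proposition~\ref{pr1}. The $\Tcal$ identity \eqref{eq:tract} is the easy case: only the single term $-\rmi k_s R^2 j_{n,s}h_n(k_s|\bx|)\Tcal_n^m$ is present, $\di$ of it vanishes by Proposition~\ref{pro:tra1}, and $\nabla(\cdot)\bnu$, $\nabla(\cdot)^T\bnu$ are read off from the last two lines of Proposition~\ref{pro:tra2}, giving $\mathfrak{b}_n = -\rmi k_s R\, j_{n,s}(k_sR h_{n,s}' - h_{n,s})$ after multiplying by $2\mu = 2\omega^2/k_s^2$ and simplifying — which I would present first as a warm-up before tackling the $\Ical/\Ncal$ block.
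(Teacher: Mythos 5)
Your plan is exactly what the paper does: its one-sentence proof says the result follows from \eqref{eq:trac} together with Propositions~\ref{pro:tra1} and~\ref{pro:tra2}, applied to the exterior expansions of Theorem~\ref{thm:singleo}, with precisely the bookkeeping you describe. One small inaccuracy: since you differentiate the already-evaluated closed form from Theorem~\ref{thm:singleo}, only first derivatives of $h_\ell$ (and $j_\ell$) arise, so the reduction you need is the spherical Bessel recursion $t h_\ell'(t)=t h_{\ell-1}(t)-(\ell+1)h_\ell(t)=-t h_{\ell+1}(t)+\ell h_\ell(t)$ rather than Bessel's equation for $h_\ell''$, which would only enter if you went back to the integral representation \eqref{eq:normdirG01}.
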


\begin{proof}
The proof follows from straightforward though tedious calculations along with the help of \eqref{eq:trac} and Propositions \ref{pro:tra1} and \ref{pro:tra2}.
\end{proof}

We are in a position to present the spectral system of the N-P operator $\left(\bK_{\bbS_R}^{\omega} \right)^* $.
\begin{thm}\label{thm:ks}
The spectral system of the N-P operator  $\left(\bK_{\bbS_R}^{\omega} \right)^*$ is given as follows
\begin{eqnarray}
\left(\bK_{\bbS_R}^{\omega} \right)^*[\Tcal_n^m]&=&\lambda_{1,n} \Tcal_n^m,\label{eq:kei1}\\
  \left(\bK_{\bbS_R}^{\omega} \right)^*[\Ucal_n^m]&=&\lambda_{2,n} \Ucal_n^m,\label{eq:kei2}\\
  \left(\bK_{\bbS_R}^{\omega} \right)^*[\Vcal_n^m]&=&\lambda_{3,n} \Vcal_n^m,\label{eq:kei3}
\end{eqnarray}
where
\[
 \lambda_{1,n} =\mathfrak{b}_n-1/2,
\]
and if $\mathfrak{d}_{1n}\neq 0$,
\[
\begin{split}
 \lambda_{2,n} =& \frac{\mathfrak{c}_{1n} + \mathfrak{d}_{2n} -1  + \sqrt{(\mathfrak{d}_{2n}-\mathfrak{c}_{1n})^2 + 4 \mathfrak{d}_{1n} \mathfrak{c}_{2n}}}{2},\\
 \lambda_{3,n} =& \frac{\mathfrak{c}_{1n} + \mathfrak{d}_{2n}  -1 - \sqrt{(\mathfrak{d}_{2n}-\mathfrak{c}_{1n})^2 + 4 \mathfrak{d}_{1n} \mathfrak{c}_{2n}}}{2},\\
 \Ucal_n^m=& \left(\mathfrak{c}_{1n} - \mathfrak{d}_{2n}  + \sqrt{(\mathfrak{d}_{2n}-\mathfrak{c}_{1n})^2 + 4 \mathfrak{d}_{1n} \mathfrak{c}_{2n}}\right) \Ical_{n-1}^m  + 2\mathfrak{d}_{1n} \Ncal_{n+1}^m,\\
 \Vcal_n^m=& \left(\mathfrak{c}_{1n} - \mathfrak{d}_{2n}  - \sqrt{(\mathfrak{d}_{2n}-\mathfrak{c}_{1n})^2 + 4 \mathfrak{d}_{1n} \mathfrak{c}_{2n}}\right) \Ical_{n-1}^m  + 2\mathfrak{d}_{1n} \Ncal_{n+1}^m;
 \end{split}
\]
if $\mathfrak{d}_{1n}= 0$,
\[
\begin{split}
 \lambda_{2,n} =& \mathfrak{c}_{1n}-1/2, \quad  \lambda_{3,n} =\mathfrak{d}_{2n}-1/2,\\
 \Ucal_n^m= &\Ical_{n-1}^m , \quad  \Vcal_n^m= \mathfrak{c}_{2n}  \Ical_{n-1}^m  + (\mathfrak{d}_{2n}-\mathfrak{c}_{1n} )\Ncal_{n+1}^m,
\end{split}
\]
with $ \Tcal_n^m$, $\Ical_{n}^m$ and $\Ncal_{n}^m$ given in Lemma \ref{lem:vec}, and the parameters $\mathfrak{b}_n$, $\mathfrak{c}_{1n}$, $\mathfrak{d}_{1n}$, $\mathfrak{c}_{2n}$ and $\mathfrak{d}_{2n}$ defined in \eqref{eq:cotrac}.
\end{thm}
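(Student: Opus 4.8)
The plan is to reduce the whole statement to the jump relation \eqref{eq:jump} together with the traction formulas of Proposition~\ref{pro:trac}, after which the determination of the spectral system of $\left(\bK_{\bbS_R}^{\omega}\right)^{*}$ becomes an elementary diagonalization on $2\times 2$ blocks. By Lemma~\ref{lem:vec} (with the index shift used in \eqref{eq:coa}--\eqref{eq:coc}), the family $\{\Tcal_n^m\}\cup\{\Ical_{n-1}^m\}\cup\{\Ncal_{n+1}^m\}$ is an orthogonal basis of $\left(L^2(\bbS_R)\right)^3$, so it suffices to describe the action of $\left(\bK_{\bbS_R}^{\omega}\right)^{*}$ on each invariant piece. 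Applying \eqref{eq:jump} from the exterior gives $\left(\bK_{\bbS_R}^{\omega}\right)^{*}=\partial_{\bnu}\bS_{\bbS_R}^{\omega}[\,\cdot\,]|_{+}-\tfrac12\bI$ on $\bbS_R$; combining this with \eqref{eq:tract} immediately yields \eqref{eq:kei1} with $\lambda_{1,n}=\mathfrak{b}_n-1/2$, since $\Tcal_n^m$ alone spans a one-dimensional invariant subspace.

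For the remaining directions fix $n\geq 1$ and $m$. Equations \eqref{eq:traci}--\eqref{eq:tracn} show that the two-dimensional space $V_n^m:=\mathrm{span}\{\Ical_{n-1}^m,\Ncal_{n+1}^m\}$ is invariant under $\partial_{\bnu}\bS_{\bbS_R}^{\omega}[\,\cdot\,]|_{+}$, hence under $\left(\bK_{\bbS_R}^{\omega}\right)^{*}$, and in the ordered basis $(\Ical_{n-1}^m,\Ncal_{n+1}^m)$ the latter is represented by the matrix
\[
M_n-\tfrac12 \mathrm{Id},\qquad M_n=\begin{pmatrix}\mathfrak{c}_{1n}&\mathfrak{c}_{2n}\\ \mathfrak{d}_{1n}&\mathfrak{d}_{2n}\end{pmatrix}.
\]
The eigenvalues of $M_n$ are $\big(\mathfrak{c}_{1n}+\mathfrak{d}_{2n}\pm\sqrt{(\mathfrak{c}_{1n}-\mathfrak{d}_{2n})^2+4\mathfrak{d}_{1n}\mathfrak{c}_{2n}}\big)/2$; subtracting $1/2$ and using $(\mathfrak{c}_{1n}-\mathfrak{d}_{2n})^2=(\mathfrak{d}_{2n}-\mathfrak{c}_{1n})^2$ produces exactly $\lambda_{2,n}$ and $\lambda_{3,n}$. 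When $\mathfrak{d}_{1n}\neq 0$, solving $(M_n-\mu I)v=0$ from the second row gives the eigenvector with first component $\mu-\mathfrak{d}_{2n}$ and second component $\mathfrak{d}_{1n}$; substituting $\mu=\lambda_{2,n}+1/2$ and $\mu=\lambda_{3,n}+1/2$ and rescaling by $2$ yields precisely the stated $\Ucal_n^m$ and $\Vcal_n^m$, which indeed correspond to $\lambda_{2,n}$ and $\lambda_{3,n}$ respectively.

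In the degenerate branch $\mathfrak{d}_{1n}=0$ the matrix $M_n$ is upper triangular with eigenvalues $\mathfrak{c}_{1n}$ and $\mathfrak{d}_{2n}$, giving $\lambda_{2,n}=\mathfrak{c}_{1n}-1/2$, $\lambda_{3,n}=\mathfrak{d}_{2n}-1/2$; the eigenvector for $\mathfrak{c}_{1n}$ is $\Ical_{n-1}^m$, while solving $(M_n-\mathfrak{d}_{2n}\mathrm{Id})v=0$ gives $v=\mathfrak{c}_{2n}\Ical_{n-1}^m+(\mathfrak{d}_{2n}-\mathfrak{c}_{1n})\Ncal_{n+1}^m=\Vcal_n^m$ (the borderline subcase $\mathfrak{c}_{1n}=\mathfrak{d}_{2n}$, $\mathfrak{c}_{2n}\neq0$ being a non-generic Jordan situation excluded from the statement). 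Assembling the three cases over all admissible $(n,m)$ and invoking the completeness of the basis yields the full spectral system \eqref{eq:kei1}--\eqref{eq:kei3}. The genuinely nontrivial content lies entirely upstream in Proposition~\ref{pro:trac} (and behind it Theorem~\ref{thm:singleo} and Propositions~\ref{prusef01}--\ref{prusef02}), so the present theorem is pure $2\times 2$ linear algebra; the only points requiring care are the consistent use of the exterior trace in \eqref{eq:jump}, the index bookkeeping of $\Ical_{n-1}^m,\Ncal_{n+1}^m$ relative to Lemma~\ref{lem:vec}, and checking that the computed eigenpairs coincide with the normalization chosen in the statement.
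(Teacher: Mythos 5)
Your proposal is correct and follows essentially the same route as the paper: use the jump formula \eqref{eq:jump} with Proposition~\ref{pro:trac} to read off that $\Tcal_n^m$ is an eigenvector and that $\mathrm{span}\{\Ical_{n-1}^m,\Ncal_{n+1}^m\}$ is invariant, then diagonalize the induced $2\times2$ block (the paper parameterizes the eigenvector as $a\,\Ical_{n-1}^m+\Ncal_{n+1}^m$ and solves the quadratic \eqref{eq:2ti} in $a$, which is the same computation as your characteristic-polynomial/eigenvector calculation). Your aside about the non-diagonalizable Jordan subcase $\mathfrak{d}_{1n}=0$, $\mathfrak{c}_{1n}=\mathfrak{d}_{2n}$, $\mathfrak{c}_{2n}\neq0$ is a legitimate observation that the paper itself silently passes over.
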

\begin{proof}
From the the jump formula \eqref{eq:jump} and the identity \eqref{eq:tract}, one can directly have that
\[
  \left(\bK_{\bbS_R}^{\omega} \right)^*[\Tcal_n^m]=  \frac{\partial }{\partial \bnu}  \bS_{\bbS_R}^{\omega}[\Tcal_n^m] -\frac{1}{2} \Tcal_n^m =(\mathfrak{b}_n-1/2) \Tcal_n^m.
\]
Hence, the first identity \eqref{eq:kei1} is proved. For the other two ones, namely \eqref{eq:kei2} and \eqref{eq:kei3}, by noting \eqref{eq:traci} and \eqref{eq:tracn}, one sees that the eigenfunctions should be the linear combinations of $\Ical_{n-1}^m$ and $\Ncal_{n+1}^m$. Hence, we can assume that the eigenfunctions have the following form $a \Ical_{n-1}^m + \Ncal_{n+1}^m$, namely,
\begin{equation}\label{eq:ei0}
   \left(\bK_{\bbS_R}^{\omega} \right)^* [a \Ical_{n-1}^m + \Ncal_{n+1}^m] =\lambda (a \Ical_{n-1}^m + \Ncal_{n+1}^m).
\end{equation}
Again from the jump formula \eqref{eq:jump} and the identities \eqref{eq:traci} and \eqref{eq:tracn}, one has that
\begin{equation}\label{eq:ki}
\begin{split}
   \left(\bK_{\bbS_R}^{\omega} \right)^* [ \Ical_{n-1}^m ]=& (\mathfrak{c}_{1n}-1/2) \Ical_{n-1}^m + \mathfrak{d}_{1n} \Ncal_{n+1}^m,\\
  \left(\bK_{\bbS_R}^{\omega} \right)^* [ \Ncal_{n+1}^m ] =& \mathfrak{c}_{2n} \Ical_{n-1}^m + (\mathfrak{d}_{2n}-1/2) \Ncal_{n+1}^m.
\end{split}
\end{equation}
Substituting the last two equations into \eqref{eq:ei0} and comparing the coefficient on both sides yield that
\begin{equation}\label{eq:2ti}
  a^2 \mathfrak{d}_{1n} + a(\mathfrak{d}_{2n}-\mathfrak{c}_{1n})-\mathfrak{c}_{2n}=0.
\end{equation}
If $\mathfrak{d}_{1n}\neq0$,  solving the equation \eqref{eq:2ti} gives that
\begin{equation}\label{eq:so1}
 a=\frac{\mathfrak{c}_{1n} -\mathfrak{d}_{2n} \pm \sqrt{(\mathfrak{d}_{2n}-\mathfrak{c}_{1n})^2+4\mathfrak{d}_{1n}\mathfrak{c}_{2n}}}{2\mathfrak{d}_{1n}}.
\end{equation}
Therefore, the two identities \eqref{eq:kei2} and \eqref{eq:kei3}, follow from substituting \eqref{eq:so1} into \eqref{eq:ei0}.
If $\mathfrak{d}_{1n}=0$, from the equation \eqref{eq:ki}, one can directly have that
\[
 \left(\bK_{\bbS_R}^{\omega} \right)^* [ \Ical_{n-1}^m ]= (\mathfrak{c}_{1n}-1/2) \Ical_{n-1}^m ,
\]
which signifies that $\Ical_{n-1}^m$ is one of the eigenfunctions of the N-P operator $\bK_{\bbS_R}^{\omega}$ corresponding to the eigenvalue $\mathfrak{c}_{1n}-1/2$. For the other eigenfunction containing $\Ncal_{n+1}^m$, solving the equation \eqref{eq:2ti} yields that
\[
 a= \frac{\mathfrak{c}_{2n}}{\mathfrak{d}_{2n}-\mathfrak{c}_{1n}}.
\]
Substituting the last equation into \eqref{eq:ei0} yields that
\[
 \left(\bK_{\bbS_R}^{\omega} \right)^* [  \Vcal_n^m ] =  (\mathfrak{d}_{2n}-1/2)  \Vcal_n^m,
\]
where
\[
  \Vcal_n^m= \mathfrak{c}_{2n}  \Ical_{n-1}^m  + (\mathfrak{d}_{2n}-\mathfrak{c}_{1n} )\Ncal_{n+1}^m.
\]

The proof is complete.
\end{proof}
\begin{rem}\label{rem:sta}
By taking $\omega\rightarrow +0$ in the spectral results in Theorem \ref{thm:ks} and applying the asymptotic properties of the spherical Bessel and Hankel functions, $j_n(t)$ and $h_n(t)$, for $t\ll1$ in \eqref{eq:asjt}, one can obtain after straightforward though tedious calcuations the spectral system of the N-P operator $\bK_{\bbS_R}^{0}$ in the static case, which coincides with that established in \cite{DLL}.
\end{rem}

\section{Polariton resonance beyond the quasi-static approximation}
In this section, using the spectral results established in the previous section, we construct a broad class of elastic structures of the form $\mathbf{C}_0$ in \eqref{eq:pa1} with no core, namely $D=\emptyset$ that can induce polariton resonances. Suppose that a source term $\bff\in H^{-1}(\mathbb{R}^3)^3$ is compactly supported outside $\Omega$, then the elastic system \eqref{eq:lame1} can be simplified as the following transmission problem

\begin{equation}\label{eq:nocore}
  \left\{
    \begin{array}{ll}
      \mathcal{L}_{\hat{\lambda}, \hat{\mu}}\bu(\bx) + \omega^2\bu(\bx) =0,    & \bx\in \Omega \\
      \mathcal{L}_{\lambda, \mu}\bu(\bx) + \omega^2\bu(\bx) =\bff, & \bx\in \mathbb{R}^3\backslash \overline{\Omega}\\
      \bu(\bx)|_- = \bu(\bx)|_+,      & \bx\in\partial \Omega \\
      \partial_{\hat{\bnu}}\bu(\bx)|_- = \partial_{\bnu}\bu(\bx)|_+, & \bx\in\partial \Omega,
    \end{array}
  \right.
\end{equation}
where $\partial_{\bnu}$ is given in \eqref{eq:trac}, $\mathcal{L}_{\lambda, \mu}$ is defined in \eqref{op:lame} and $\bu$ satisfies the radiation condition \eqref{eq:radi}. In \eqref{eq:nocore} and also in what follows, $ \mathcal{L}_{\hat{\lambda}, \hat{\mu}}$ and $ \partial_{\hat{\bnu}}$ denote the Lam\'e operator and the traction operator associated with the Lam\'e parameters $\hat{\lambda}$ and $\hat{\mu}$, and the same notations hold for the single-layer potential operator $\hat{\bS}_{\Omega}^{\omega}$ and the N-P operator $(\hat{\bK}_{\partial\Omega}^{\omega})^*$.

Using the single-layer potential defined in \eqref{eq:single}, the solution to the system \eqref{eq:nocore} can be written as
\begin{equation}\label{eq:sol}
  \bu=
 \left\{
   \begin{array}{ll}
     \hat{\bS}_{\partial\Omega}^{\omega}[\bpsi_1](\bx), & \bx\in \Omega, \\
     \bS_{\partial\Omega}^{\omega}[\bpsi_2](\bx) + \mathbf{F}, &  \bx\in \mathbb{R}^3\backslash \overline{\Omega},
   \end{array}
 \right.
\end{equation}
where
\begin{equation}\label{eq:newpf}
 \mathbf{F}(\bx):= \int_{\mathbb{R}^3} \mathbf{\Gamma}^{\omega}(\bx-\by)\bff(\by)d\by, \quad \bx\in \mathbb{R}^3,
\end{equation}
is called the Newtonian potential of the source $\bff$ and $\bpsi_1, \bpsi_2\in L^2(\partial\Omega)^3$. One can readily verify that the solution defined in \eqref{eq:sol} satisfy the first two conditions in \eqref{eq:nocore}. For the third and forth condition in \eqref{eq:nocore} across $\partial\Omega$, namely the transmission condition, one can obtain that
\begin{equation}\label{eq:sol1}
  \left\{
    \begin{array}{ll}
      \hat{\bS}_{\partial\Omega}^{\omega}[\bpsi_1] - \bS_{\partial\Omega}^{\omega}[\bpsi_2] = \mathbf{F}, \\
      \partial_{\hat{\bnu}}\hat{\bS}_{\partial\Omega}^{\omega}[\bpsi_1]|_- - \partial_{\bnu}\bS_{\partial\Omega}^{\omega}[\bpsi_2]|_+ = \partial_{\bnu}\mathbf{F} ,
    \end{array}
  \right.
  \quad \bx\in\partial \Omega.
\end{equation}
With the help of the jump formula \eqref{eq:jump}, the equation \eqref{eq:sol1} can be rewritten as
\begin{equation}\label{eq:ma1}
  \bA^{\omega}
 \left[
   \begin{array}{c}
     \bpsi_1 \\
     \bpsi_2 \\
   \end{array}
 \right]=
\left[
  \begin{array}{c}
    \mathbf{F} \\
    \partial_{\bnu}\mathbf{F} \\
  \end{array}
\right],
\end{equation}
where
\begin{equation}
  \bA^{\omega} =
 \left[
   \begin{array}{cc}
      \hat{\bS}_{\partial\Omega}^{\omega} & - \bS_{\partial\Omega}^{\omega} \\
     -1/2I+(\hat{\bK}_{\partial\Omega}^{\omega})^* & -1/2I-(\bK_{\partial\Omega}^{\omega})^* \\ \\
   \end{array}
 \right].
\end{equation}
In the following, we assume that the domain $\Omega$ is a ball $B_R$. Since the source term $\bff$ is supported outside $B_R$, there exists $\epsilon>0$ such that when $\bx\in B_{R+\epsilon}$, the Newtonian potential $\bF$ defined in \eqref{eq:newpf} satisfies
\[
 \Lcal_{\lambda,\mu}\bF + \omega^2\bF=0.
\]
Thus $\bF$ can be written as
\begin{equation}\label{eq:FF}
  \bF= \sum_{n=0}^{\infty} \sum_{m=-n}^{n} \left(f_{1,n,m} j_n(k_s|\bx|) \Tcal_n^m + f_{2,n,m}  \bS_{\bbS_R}^{\omega}[ \Ical_{n-1}^m]  + f_{3,n,m} \bS_{\bbS_R}^{\omega}[ \Ncal_{n+1}^m] \right) ,
\end{equation}
for $\bx\in B_{R+\epsilon}$, which follows from Lemma \ref{lem:vec} and Proposition \ref{pro:singlei}.

Our main result in this section is stated in the following theorem. It characterizes the polariton resonance for the configuration without a core.

\begin{thm}\label{thm:reson}
Consider the configuration $\mathbf{C}_0$ with $D=\emptyset$ defined in \eqref{eq:pa1}. Suppose that the source term $\bff\in H^{-1}(\mathbb{R}^3)^3$ is compactly supported outside the domain $\Omega$, whose  Newtonian potential $\bF$ is defined in \eqref{eq:FF} with $f_{1,n_0,m}\neq 0$ for  some $n_0\in\mathbb{N}$.  For any $M\in\mathbb{R}_+$, if the Lam\'e parameter $\hat{\mu}$ inside the domain $\Omega$ is chosen such that
\begin{equation}\label{con:pla1}
  \frac{ \Im(\hat{\mu})}{|\widetilde{\psi}_{1,n_0,m}|^2}>M,
\end{equation}
where $\widetilde{\psi}_{1,n_0,m}$ is defined in \eqref{eq:psi10}, then the polariton resonance occurs.

Furthermore, if $n_0\gg 1$ is large enough such that the spherical Bessel and Hankel functions, $j_n(t)$ and $h_n(t)$, enjoy the asymptotic expression shown in \eqref{eq:asj}, then one can choose the Lam\'e parameter $\hat{\mu}$ inside the domain $\Omega$ as follows
\begin{equation}\label{con:001}
\hat{\mu}=-\mu+ \rmi \frac{1}{M}+ p_{1,n_0},
\end{equation}
where $p_{1,n_0}$ should satisfy
\begin{equation}\label{con:n01}
 p_{1,n_0} + q_{1,n_0}=\Ocal\left(\frac{1}{M}\right),
\end{equation}
with $q_{1,n_0}$ defined in \eqref{eq:coe1}, to ensure the occurrence of the polariton resonance.
\end{thm}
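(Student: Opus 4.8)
The plan is to solve the linear system \eqref{eq:ma1} explicitly by projecting onto the vectorial spherical harmonic basis and then track the divergent behaviour of the density $\bpsi_1$ as $\hat\mu$ approaches the resonant value. First I would expand the unknown densities $\bpsi_1,\bpsi_2$ in the basis $\{\Tcal_n^m,\Ical_{n-1}^m,\Ncal_{n+1}^m\}$, and likewise expand the right-hand side data $\bF$ and $\partial_{\bnu}\bF$; since $\bF$ is already given in the form \eqref{eq:FF}, its traction is obtained from Proposition~\ref{pro:trac}. Because the single-layer operators and N-P operators act diagonally on the $\Tcal$-sector and block-diagonally (in the $2\times2$ $\Ical$--$\Ncal$ sector) by Theorem~\ref{thm:singleo}, Proposition~\ref{pro:singlei} and Proposition~\ref{pro:trac}, the infinite system \eqref{eq:ma1} decouples into one scalar equation for each $(n,m)$ in the $\Tcal$-channel and a $4\times4$ linear system in the $(\Ical,\Ncal)$-channel. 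I would focus on the $\Tcal_{n_0}^m$ channel, where $f_{1,n_0,m}\neq0$: there the $2\times2$ system in $(\psi_{1,n_0,m},\psi_{2,n_0,m})$ has coefficient matrix built from $\hat b_{n_0}, b_{n_0}$ (single-layer eigenvalues) and $\hat{\mathfrak b}_{n_0}-1/2$, $\mathfrak b_{n_0}-1/2$ (N-P eigenvalues, with and without hats), and solving it by Cramer's rule gives $\psi_{1,n_0,m}=\widetilde\psi_{1,n_0,m}$ as an explicit ratio whose denominator vanishes precisely when the hatted N-P eigenvalue equals the unhatted one, i.e.\ in the resonant configuration.

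Next I would insert this explicit $\widetilde\psi_{1,n_0,m}$ into the energy functional. The key point is that $E(\bu)=\Im P_{\hat\lambda,\hat\mu}(\bu,\bu)$ with $\bu=\hat\bS_{\partial\Omega}^\omega[\bpsi_1]$ in $\Omega$; using the divergence theorem / Green's identity for the Lam\'e system together with the jump relation \eqref{eq:jump}, one rewrites $P_{\hat\lambda,\hat\mu}(\bu,\bu)$ as a boundary pairing $\int_{\partial\Omega}\partial_{\hat\bnu}\bu|_-\cdot\overline{\bu}\,ds$, which in turn is a sum over $(n,m)$ of terms of the shape $(\text{N-P eigenvalue}+1/2)\times|\text{single-layer eigenvalue}|\times|\psi|^2$ times suitable normalisation constants. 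Taking the imaginary part, the dominant contribution as resonance is approached comes from the $\Tcal_{n_0}^m$ mode and is proportional to $\Im(\hat\mu)\,|\widetilde\psi_{1,n_0,m}|^2$ up to bounded factors; one then estimates this below by a constant multiple of $\Im(\hat\mu)/|\widetilde\psi_{1,n_0,m}|^{-2}$... more precisely, the structure forces $E(\bu)\gtrsim \Im(\hat\mu)^{-1}$ when $|\widetilde\psi_{1,n_0,m}|^2\sim \Im(\hat\mu)^{-1}$, which is why condition \eqref{con:pla1} in the form $\Im(\hat\mu)/|\widetilde\psi_{1,n_0,m}|^2>M$ is exactly what guarantees $E(\bu)\geq M$. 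I would verify the inequality $E(\bu)\geq \Im(\hat\mu)/|\widetilde\psi_{1,n_0,m}|^2$ (or a constant times it) directly from the closed-form expression.

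For the second half of the theorem — the explicit choice \eqref{con:001}–\eqref{con:n01} of $\hat\mu$ when $n_0\gg1$ — I would substitute the large-order asymptotics \eqref{eq:asj} for $j_{n_0}(t),h_{n_0}(t)$ into the formulas for $\hat{\mathfrak b}_{n_0}$ and $\mathfrak b_{n_0}$ from \eqref{eq:cotrac}, and into the denominator of $\widetilde\psi_{1,n_0,m}$. The leading-order behaviour of $\mathfrak b_n - 1/2$ turns out (as in the quasi-static limit) to be $-\tfrac12\cdot\tfrac{2n+1}{\,?\,}$-type expression in $\mu$, while the hatted one involves $\hat\mu$; setting the real parts of the two N-P eigenvalues equal to leading order gives $\Re\hat\mu\approx-\mu$, and the subleading correction is encapsulated in the quantity $q_{1,n_0}$ defined later in \eqref{eq:coe1}, whence the ansatz $\hat\mu=-\mu+\rmi/M+p_{1,n_0}$ with the compatibility condition \eqref{con:n01}. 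With this choice, a short computation shows that the denominator of $\widetilde\psi_{1,n_0,m}$ is $\Ocal(1/M)$ while its numerator stays $\Ocal(1)$, so $|\widetilde\psi_{1,n_0,m}|^2\gtrsim M^2$... wait, one needs $|\widetilde\psi_{1,n_0,m}|^2 \asymp M$, which the $1/M$-scaling of both $\Im\hat\mu$ and the denominator delivers, making \eqref{con:pla1} hold.

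The main obstacle I anticipate is bookkeeping: ensuring that the \emph{other} modes (all $n\neq n_0$ and the entire $(\Ical,\Ncal)$-sector at $n=n_0$) contribute non-negatively, or at worst a bounded amount, to $E(\bu)$, so that the single divergent mode genuinely dominates; this requires knowing the signs of $\Im$ of the relevant spectral quantities and uniform (in $n$) bounds on the non-resonant Fourier coefficients of $\bF$, which is where the assumption that only $\hat\mu$ (not $\hat\lambda$) is constrained and that $\Im\hat\mu,\Im\hat\lambda>0$ enters crucially. A secondary technical point is justifying that the series for $\bu$ and for $E(\bu)$ converge and can be differentiated/estimated term-by-term, which rests on the decay of $f_{j,n,m}$ inherited from $\bff\in H^{-1}$ together with the large-$n$ asymptotics \eqref{eq:asj}.
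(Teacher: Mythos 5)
Your outline tracks the paper's proof in its essentials: reduce to the $\Tcal$-channel via orthogonality, solve the resulting $2\times2$ linear system for $(\psi_{1,n,m},\psi_{2,n,m})$ (the paper does this with the Wronskian $j_n h_n'-j_n'h_n=\rmi/t^2$ rather than explicitly invoking Cramer), identify the explicit formula $\psi_{1,n,m}=f_{1,n,m}j_n(k_sR)/\widetilde\psi_{1,n,m}$, express $E(\bu)$ in terms of $\Im(\hat\mu)|\psi_{1,n_0,m}|^2$, and then use the large-$n_0$ asymptotics \eqref{eq:asj} to reduce $\widetilde\psi_{1,n_0,m}$ to $C(\hat\mu+\mu+q_{1,n_0})$ so that the prescribed $\hat\mu$ makes this quantity $\Ocal(1/M)$. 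However, two points need repair.

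First, the key mechanism that makes the energy proportional to $\Im(\hat\mu)$ alone (and not $\Im(\hat\lambda)$) is the identity $\nabla\cdot\hat\bS_{\partial\Omega}^{\omega}[\Tcal_n^m]=0$; then $P_{\hat\lambda,\hat\mu}(\bu,\bu)=2\hat\mu\int_\Omega|\nabla^s\bu|^2$ with a real nonnegative integral, so $E(\bu)=\Im(\hat\mu)\cdot(\text{real}\ge 0)$. You propose instead to pass through Green's identity and a boundary pairing; that route is feasible, but you would still need the divergence-free observation (or something equivalent) to see why the $\hat\lambda$-dependence vanishes, and you must also account for the $\omega^2\int_\Omega|\bu|^2$ term produced by Green's formula for the operator $\Lcal_{\hat\lambda,\hat\mu}+\omega^2$ (it is real, hence harmless under $\Im$, but it should be stated). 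As written, your sketch just asserts that only $\hat\mu$ matters without giving the reason, and this is precisely what distinguishes the present constructions from the earlier ones that required both convexity conditions to be violated.

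Second, your final scaling paragraph has the quantities inverted. The object $\widetilde\psi_{1,n_0,m}$ is not a fraction with its own numerator and denominator; it \emph{is} the denominator of $\psi_{1,n_0,m}$, equal (after the asymptotics) to $C(\hat\mu+\mu+q_{1,n_0})$. With $\hat\mu=-\mu+\rmi/M+p_{1,n_0}$ and $p_{1,n_0}+q_{1,n_0}=\Ocal(1/M)$, one gets $\widetilde\psi_{1,n_0,m}=\Ocal(1/M)$ and hence $|\widetilde\psi_{1,n_0,m}|^2=\Ocal(1/M^2)$ — small, not large. Then $\Im(\hat\mu)/|\widetilde\psi_{1,n_0,m}|^2\gtrsim (1/M)\cdot M^2=M$. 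Both your claim ``$|\widetilde\psi_{1,n_0,m}|^2\gtrsim M^2$'' and your attempted self-correction ``one needs $|\widetilde\psi_{1,n_0,m}|^2\asymp M$'' are wrong (the second by a power of $M$ and in direction). What is actually required is $|\widetilde\psi_{1,n_0,m}|^2\lesssim 1/M^2$, which is exactly what \eqref{con:001}--\eqref{con:n01} force.
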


%

\begin{proof}
Following Propositions \ref{pro:singlei} and \ref{pro:trac}, one can conclude that the displacement and traction of the term $j_n(k_s|\bx|) \Tcal_n^m$ on the boundary $B_R$ are orthogonal to both the corresponding components of the other two terms, namely $\bS_{\bbS_R}^{\omega}[ \Ical_{n-1}^m] $ and $\bS_{\bbS_R}^{\omega}[ \Ncal_{n-1}^m] $. Therefore, in order to show the polariton resonance, it suffices to consider the source only containing the terms $j_n(k_s|\bx|) \Tcal_n^m$, namely
\begin{equation}\label{eq:FF1}
  \bF= \sum_{n=0}^{\infty} \sum_{m=-n}^{n} \left(f_{1,n,m} j_n(k_s|\bx|) \Tcal_n^m \right) .
\end{equation}
Thanks to the orthogonality of the functions $\Tcal_n^m$, $\Ical_n^m$ and $\Ncal_n^m$, the density functions in \eqref{eq:sol} have the following expressions
\begin{equation}
\begin{split}
\bpsi_1&=\sum_{n=0}^{+\infty}\sum_{m=-n}^n \psi_{1,n,m} \Tcal_n^m, \\
\bpsi_2&=\sum_{n=0}^{+\infty}\sum_{m=-n}^n \psi_{2,n,m} \Tcal_n^m.
\end{split}	
\end{equation}
From the jump formula \eqref{eq:jump}, and Propositions \ref{pro:singlei} and  \ref{pro:trac},  the equation \eqref{eq:ma1} can be written as
\begin{equation}\label{eq:matrix}
 \left[
   \begin{array}{cc}
      a_{11} &  a_{12} \\
    a_{21}  &  a_{22} \\
   \end{array}
 \right]
 \left[
   \begin{array}{c}
     \psi_{1,n,m} \\
     \psi_{2,n,m} \\
   \end{array}
 \right]=
\left[
  \begin{array}{c}
    f_{1,n,m} j_n(k_sR) \\
    g_{1,n,m} \\
  \end{array}
\right],
\end{equation}
where
\[
 a_{11}=-\frac{\rmi \hat{k}_s R^2 j_n(\hat{k}_s R) h_n(\hat{k}_s R)}{\mu}, \quad a_{12}=\frac{\rmi k_s R^2 j_n(k_s R) h_n(k_s R)}{\mu},
\]
\[
 a_{21}=-\rmi \hat{k}_s R^2 h_n(\hat{k}_s R) \left(\hat{k}_s R  j_n^{\prime}(\hat{k}_s R) - j_n(\hat{k}_s R) \right),
 \]
\[
 a_{22}= -1 + \rmi k_s R^2 h_n(k_s R) \left(k_s R  j_n^{\prime}(k_s R) - j_n(k_s R) \right),
\]
and
\[
 g_{1,n,m}= f_{1,n,m}\mu  \left(k_s R  j_n^{\prime}(k_s R) - j_n(k_s R) \right)/R,
\]
with $\hat{k}_s=\omega/\sqrt{\hat{\mu}}$. With the help of the Wronskian identity
\[
  j_n(t)h_n^{\prime}(t)-j_n^{\prime}(t)h_n(t)=\frac{\rmi}{t^2}, \quad \mbox{for} \quad t>0,
\]
solving the equation \eqref{eq:matrix} yields that
\begin{equation}\label{eq:psi10}
 \psi_{1,n,m}=\frac{f_{1,n,m} j_n(k_sR)}{ \widetilde{\psi}_{1,n,m} },
\end{equation}
where
\[
\begin{split}
 \widetilde{\psi}_{1,n,m} = & \left( ((\mu-\hat{\mu})j_n(\hat{k}_sR) + \hat{k}_s\hat{mu}R j_n^{\prime}(\hat{k}_sR))h_n(k_s R)  \right. \\
  & \left. -k_s \mu R j_n(\hat{k}_sR)h_n^{\prime}(k_sR)  \right) k_s \hat{k}_s R^3 j_n(k_s R)h_n(\hat{k}_sR).
 \end{split}
\]
Next we calculate the dissipation energy $E(\bu)$. From the definition of the functional $P_{\lambda,\mu}(\bu,\bu)$ given in \eqref{eq:func1} and the following identity
\[
 \nabla\cdot\bu= \nabla\cdot\hat{\bS}_{\partial\Omega}^{\omega}[\Tcal_n^m]=0,
\]
 there holds that
\begin{equation}\label{eq:E1}
\begin{split}
 E(\bu) &=  \Im P_{\hat{\lambda},\hat{\mu}}(\bu, \bu) =  \Im\left( \hat{\mu}  P_{\hat{\lambda}/\hat{\mu},1}(\bu, \bu) \right)  \\
 &=\Im(\hat{\mu}) \sum_{n=0}^{+\infty}\sum_{m=-n}^n\left( |\psi_{1,n,m}|^2  P_{\hat{\lambda}/\hat{\mu},1}\left( \hat{\bS}_{\partial\Omega}^{\omega}[\Tcal_n^m], \hat{\bS}_{\partial\Omega}^{\omega}[\Tcal_n^m]\right) \right).
 \end{split}
\end{equation}
Thus if there exists $n_0$ such that for any $M\in\mathbb{R}_+$
\begin{equation}\label{eq:reson1}
 \Im(\hat{\mu}) |\psi_{1,n_0,m}|^2 >M,
\end{equation}
then resonance occurs. From the expression of $\psi_{1,n,m}$ in \eqref{eq:psi10}, the condition \eqref{eq:reson1} is equivalent to the following one
\begin{equation}\label{eq:reson2}
 \frac{ \Im(\hat{\mu})}{|\widetilde{\psi}_{1,n_0,m}|^2}>M,
\end{equation}
since $f_{1,n_0,m}\neq0$.

Next we perform some asymptotic analysis for the left-hand side of the condition \eqref{eq:reson2} for the large $n_0\gg 1$. From the asymptotic expression of the spherical Bessel and Hankel functions, $j_n(t)$ and $h_n(t)$ in \eqref{eq:asj}, one can obtain that
\begin{equation}\label{eq:coe1}
 \widetilde{\psi}_{1,n_0,m}=C\left( \hat{\mu} + \mu +q_{1,n_0}\right)
\end{equation}
where
\[
 q_{1,n_0}=\Ocal\left( \frac{1}{n_0}\right).
\]
Thus if the parameter $\hat{\mu}$ inside the domain $\Omega$ is chosen as stated in the theorem that
\begin{equation}\label{eq:coe2}
 \hat{\mu}=-\mu+ \rmi/M + p_{1,n_0},
\end{equation}
where
\begin{equation}\label{eq:coe3}
 p_{1,n_0} + q_{1,n_0}=\Ocal(1/M),
\end{equation}
with $q_{1,n_0}$ defined in \eqref{eq:coe1}, then the left-hand side of the condition \eqref{eq:reson2} can be simplified as
\begin{equation}\label{eq:reson3}
 \frac{ \Im(\hat{\mu})}{|\widetilde{\psi}_{1,n_0,m}|^2}\geq M.
\end{equation}
Thus the polariton resonance occurs and the proof is complete.

%
\end{proof}
\begin{rem}\label{rem:con}
In Theorem \ref{thm:reson}, we only require the constrain on the Lam\'e parameter $\hat{\mu}$ and there is no restrict on the Lam\'e parameter $\hat{\lambda}$, which indicates that only the first strong convexity condition in \eqref{eq:con} is broken.
\end{rem}
\begin{rem}
We do the numerical simulation to demonstrate that the condition \eqref{con:pla1} can be achieved. The parameters are chosen as follows
 \[
  n_0=5, \ \ \omega=5, \ R=1, \ \ \mu=1,  \ \ \mbox{and} \ \ \Re(\hat{\mu})=-1.87988,
 \]
which is the case beyond the quasi-static approximation from the values of $\omega$ and $R$. The absolute value of the LHS quantity in \eqref{con:pla1} in terms of the parameter $\Im(\hat{\mu})$ is plotted in Fig.~ \ref{fig:reson1}, which evidently demonstrates that the condition \eqref{con:pla1} is fulfilled.
\begin{figure}[t]
  \centering
 {\includegraphics[width=5cm]{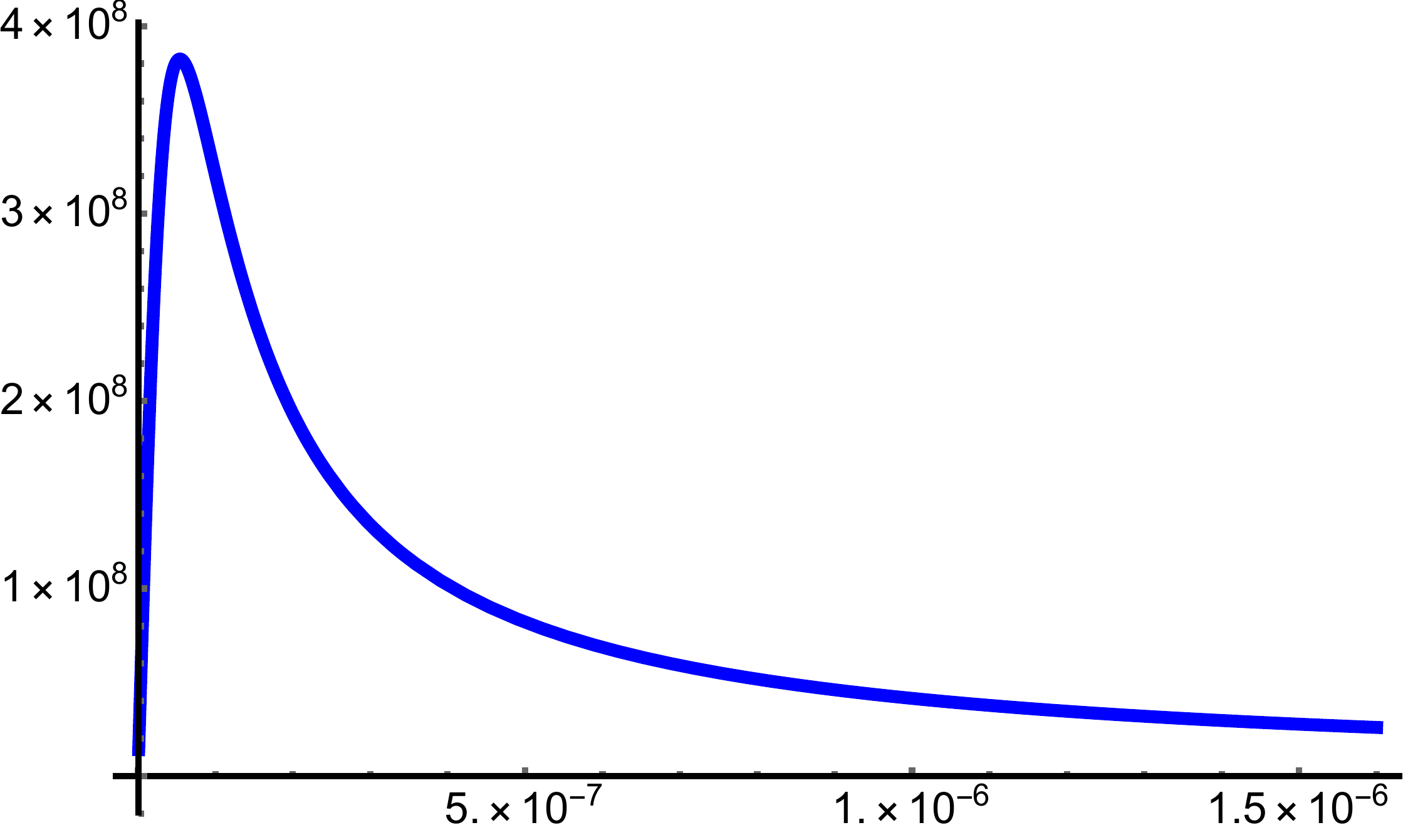}}
  \caption {The absolute value of the LHS quantity in \eqref{con:pla1} in terms of the parameter  $\Im(\hat{\mu})$. }
  \label{fig:reson1}
\end{figure}
\end{rem}

\begin{rem}
Indeed, the condition \eqref{con:n01} is easy to achieve. Since the parameter $q_{1,n_0}$ defined in \eqref{eq:coe1} is of $\Ocal(1/n_0)$, therefore one could choose $p_1=\Ocal(1/n_0)$ to fulfill the condition \eqref{con:n01}.
 Moreover, we do the numerical simulation to demonstrate that the condition  \eqref{con:n01} can be fulfilled. The parameters are chosen as follows
 \[
  n_0=100, \ \ \omega=5, \ R=1, \ \ \mu=1,  \ \  M=10^{10} \ \ \mbox{and} \ \ \hat{\mu}=-\mu+ \rmi/M+ p_{1,n_0}.
 \]
 One can easily check that this is the case beyond quasi-static approximation. The absolute value of the LHS quantity in \eqref{con:n01} in terms of the parameter $p_{1,n_0}$ is plotted in Fig.~ \ref{fig:reson}, which apparently demonstrates that the condition \eqref{con:n01} is satisfied with $p_{1,n_0}\approx 0.02779005=\Ocal(1/n_0)$.
\begin{figure}[t]
  \centering
 {\includegraphics[width=5cm]{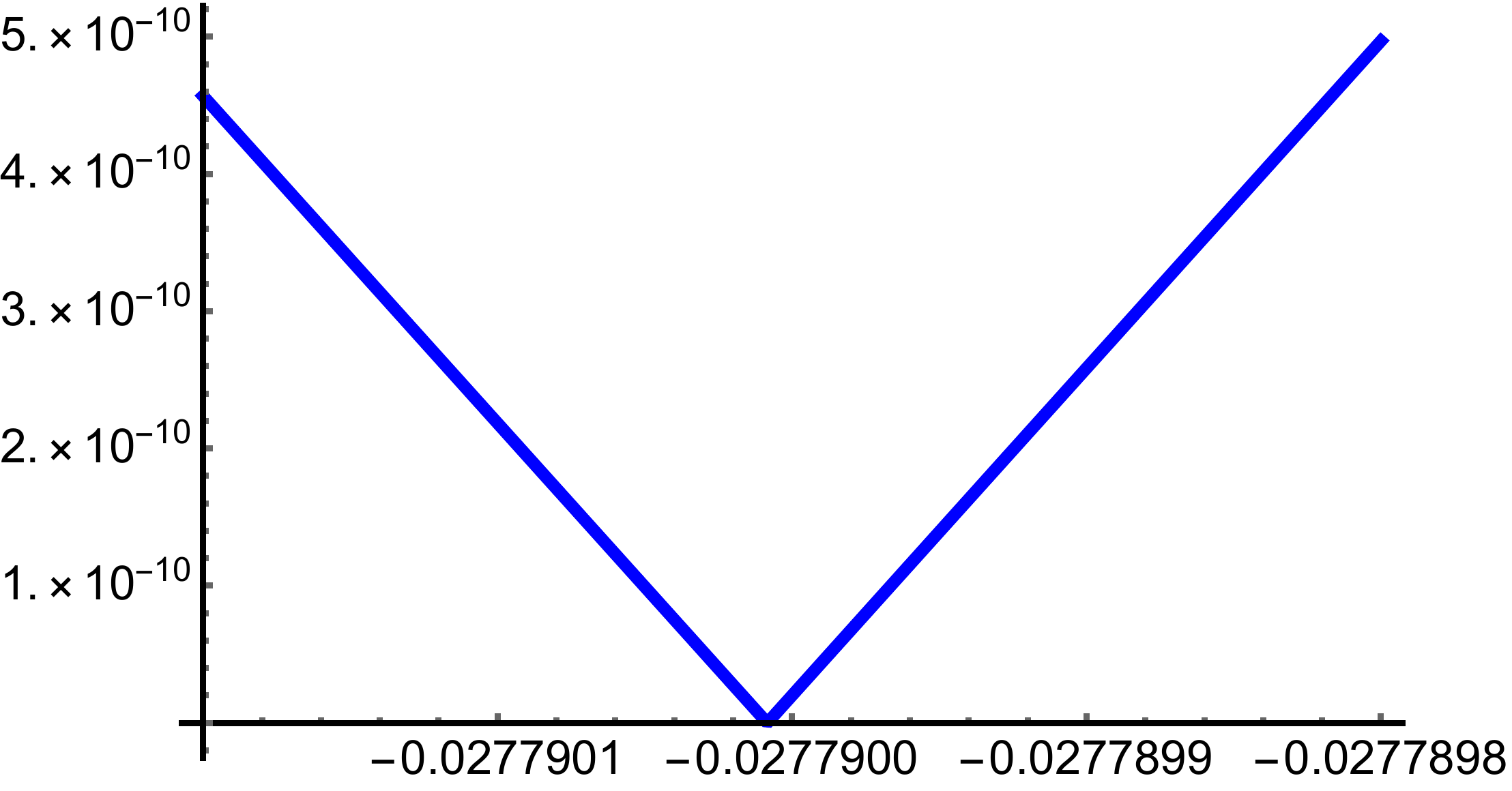}}
  \caption{ The absolute value of the LHS quantity in \eqref{con:n01} in terms of the parameter $p_{1,n_0}$. }
  \label{fig:reson}
\end{figure}
 \end{rem}

 \section{CALR beyond the quasi-static approximation}

In this section, we consider the cloaking effect induced by anomalous localized resonance. In the following, let $D=B_{r_i}$ and $\Omega=B_{r_e}$. To save the notations, we first define the following two functions
\begin{equation}\label{eq:a1}
\begin{split}
\acute{j}_n(t) &= t j^{\prime}_n(t) - j_n(t), \\
\acute{h}_n(t) &= t h^{\prime}_n(t) - h_n(t),
\end{split}
\end{equation}
where $j^{\prime}_n(t)$ and $h^{\prime}_n(t)$ are the derivatives of the functions $j_n(t)$ and $h_n(t)$, respectively. Set 
\[
 \hat{k}_s=\omega/\sqrt{\hat{\mu}}, \quad \mbox{and} \quad \breve{k}_s=\omega/\sqrt{\breve{\mu}},
\]
and we also introduce the following notations,
\begin{equation}\label{eq:a2}
\begin{split}
& j_{n0i}=j_n(k_s r_i) \quad j_{n1i}=j_n(\breve{k}_s r_i), \quad j_{n2i}=j_n(\hat{k}_s r_i),  \\
& j_{n0e}=j_n(k_s r_e) \quad j_{n1e}=j_n(\breve{k}_s r_e), \quad j_{n2e}=j_n(\hat{k}_s r_e),
\end{split}
\end{equation}
and, the same notations hold for the spherical Hankel function $h_n(t)$, the derivative of the  Bessel and Hankel functions, $j^{\prime}_n(t)$ and $h^{\prime}_n(t)$, the functions $\grave{j}_n(t)$ and $\grave{h}_n(t)$ defined in \eqref{eq:asj}, and the functions $\acute{j}_n(t)$ as well as $\acute{h}_n(t)$ defined in \eqref{eq:a1}. Moreover, we let $\mathcal{L}_{\breve{\lambda}, \breve{\mu}}$, $\partial_{\breve{\bnu}}$, $\breve{\bS}_{\partial D}$ and $(\breve{\bK}_{\partial D}^{\omega})^* $, respectively, denote the Lam\'e operator, the associated conormal derivative, the single layer potential operator and the N-P operator associated with the Lam\'e parameters $(\breve{\lambda}, \breve{\mu})$.

 Assume that the source $\bff\in H^{-1}(\mathbb{R}^3)^3$ is compactly supported outside $\Omega$, then the elastic system \eqref{eq:lame1} can be expressed as the following equation system
\begin{equation}\label{eq:calr1}
  \left\{
    \begin{array}{ll}
      \mathcal{L}_{\breve{\lambda},\breve{\mu}}\bu(\bx) + \omega^2\bu(\bx)  =0 , & \mbox{in} \ \ D , \medskip  \\
      \mathcal{L}_{\hat{\lambda},\hat\mu}\bu(\bx)+ \omega^2\bu(\bx)  =0 , & \mbox{in} \ \ \Omega\backslash \overline{D} ,\medskip   \\
      \mathcal{L}_{\lambda, \mu}\bu(\bx) + \omega^2\bu(\bx)   =\bff, &  \mbox{in} \ \ \mathbb{R}^3\backslash \overline{\Omega},\medskip \\
      \bu|_- = \bu|_+, \quad \partial_{\breve{\bnu}}\bu|_- = \partial_{\hat{\bnu}}\bu|_+   & \mbox{on} \ \ \partial D ,\medskip\\
      \bu|_- = \bu|_+, \quad  \partial_{\hat{\bnu}}\bu|_- = \partial_{\bnu}\bu|_+ & \mbox{on} \; \partial \Omega.
    \end{array}
  \right.
\end{equation}
With the help of the potential theory, the solution to the equation system \eqref{eq:calr1} can be represented by
\begin{equation}\label{eq:sc1}
  \bu(\bx)=
  \left\{
    \begin{array}{ll}
      \breve{\bS}^\omega_{\partial D}[\bvarphi_1](\bx), & \bx\in D,\medskip \\
      \hat{\bS}^\omega_{\partial D}[\bvarphi_2](\bx) + \hat{\bS}^\omega_{\partial\Omega}[\bvarphi_3](\bx), & \bx\in \Omega\backslash \overline{D},\medskip \\
      \bS^\omega_{\partial\Omega}[\bvarphi_4](\bx) + \mathbf{F}(\bx), & \bx\in \mathbb{R}^3\backslash \overline{\Omega},
    \end{array}
  \right.
\end{equation}
where $\bvarphi_1, \bvarphi_2\in L^2(\partial D)^3$, $\bvarphi_3, \bvarphi_4\in L^2(\partial\Omega)^3$ and $\mathbf{F}$ is the Newtonian potential of the source $\bff$ defined in \eqref{eq:newpf}. One can easily see that the solution given \eqref{eq:sc1} satisfies the first three condition in \eqref{eq:calr1} and the last two conditions on the boundary yield that
\begin{equation}\label{eq:sc2}
  \left\{
    \begin{array}{ll}
      \breve{\bS}^\omega_{\partial D}[\bvarphi_1]=\hat{\bS}^\omega_{\partial D}[\bvarphi_2] + \hat{\bS}^\omega_{\partial \Omega}[\bvarphi_3], & \mbox{on} \quad \partial D,\medskip \\
       \partial_{\breve{\bnu}}\breve{\bS}^\omega_{\partial D}[\bvarphi_1|_- = \partial_{\hat{\bnu}}(\hat{\bS}^\omega_{\partial D}[\bvarphi_2] + \hat{\bS}^\omega_{\partial \Omega}[\bvarphi_3])|_+ , & \mbox{on} \quad \partial D, \medskip \\
      \hat{\bS}^\omega_{\partial D}[\bvarphi_2] + \hat{\bS}^\omega_{\partial \Omega}[\bvarphi_3]= \bS^\omega_{\partial \Omega}[\bvarphi_4] + \mathbf{F}, & \mbox{on} \quad \partial\Omega, \medskip \\
      \partial_{\hat{\bnu}}(\hat{\bS}^\omega_{\partial D}[\bvarphi_2] + \hat{\bS}^\omega_{\partial \Omega}[\bvarphi_3])|_- = \partial_{\bnu}(\bS^\omega_{\partial \Omega}[\bvarphi_4] + \mathbf{F})|_+ , & \mbox{on} \quad \partial \Omega.
    \end{array}
  \right.
\end{equation}
With the help of the jump formual in \eqref{eq:jump}, the equation system \eqref{eq:sc2} further yields the following integral system,
\begin{equation}\label{eq:cma}
  \left[
    \begin{array}{cccc}
        \breve{\bS}^\omega_{\partial D} & -\hat{\bS}^\omega_{\partial D} & -\hat{\bS}^\omega_{\partial \Omega} & 0 \\
      -\frac{1}{2} +(\breve{\bK}_{\partial D}^{\omega})^*   & -\frac{1}{2} - (\hat{\bK}_{\partial\Omega}^{\omega})^*  &\partial_{\hat{\bnu}_i} \hat{\bS}^\omega_{\partial\Omega} & 0 \\
      0 &  \hat{\bS}^\omega_{\partial D} & \hat{\bS}^\omega_{\partial \Omega} & - \bS^\omega_{\partial \Omega} \\
      0 & \partial_{\hat\bnu_e}\hat{\bS}^\omega_{\partial D} & -\frac{1}{2} + (\hat{\bK}_{\partial\Omega}^{\omega})^* & -\frac{1}{2} -({\bK}_{\partial\Omega}^{\omega})^* \\
    \end{array}
  \right]
\left[
  \begin{array}{c}
    \bvarphi_1 \\
    \bvarphi_2 \\
    \bvarphi_3 \\
    \bvarphi_4 \\
  \end{array}
\right]=
\left[
  \begin{array}{c}
    0 \\
    0 \\
    \mathbf{F} \\
    \partial_{\bnu}\mathbf{F} \\
  \end{array}
\right],
\end{equation}
where $\partial_{\hat{\bnu}_i}$ and $ \partial_{\hat\bnu_e}$ signify the conormal derivatives on the boundaries of $D$ and $\Omega$, respectively.

In the following, we assume that the Newtonian potential $\bF$ of the source $\bff$ has the following expression
\begin{equation}\label{eq:FF2}
  \bF= \sum_{n=N}^{\infty} \sum_{m=-n}^{n} \left(f_{1,n,m} j_n(k_s|\bx|) \Tcal_n^m \right) \quad\mbox{for}\quad \bx\in\Omega,
\end{equation}
where $N$ is large enough such the the spherical Bessel and Hankel functions, $j_n(t)$ and $h_n(t)$, fulfill the asymptotic expansions shown in \eqref{eq:asj}. From the Theorem \ref{thm:singleo} and the orthogonality of the functions $\Tcal_n^m$, $\Ical_{n-1}^m$ and $ \Ncal_{n+1}^m$, one can deduce that the density functions $ \bvarphi_i$, $i=1,2,3,4$ can be written as follows
\begin{equation}\label{eq:co1}
\begin{split}
\bvarphi_1&=\sum_{n=N}^{+\infty}\sum_{m=-n}^n \varphi_{1,n,m} \Tcal_n^m,  \qquad  \bvarphi_2=\sum_{n=N}^{+\infty}\sum_{m=-n}^n \varphi_{2,n,m} \Tcal_n^m,\\
\bvarphi_3&=\sum_{n=N}^{+\infty}\sum_{m=-n}^n \varphi_{3,n,m} \Tcal_n^m,  \qquad  \bvarphi_4=\sum_{n=N}^{+\infty}\sum_{m=-n}^n \varphi_{4,n,m} \Tcal_n^m.
\end{split}	
\end{equation}
With the help of the equation \eqref{eq:sint} as well as the Theorem \ref{thm:ks} and by substituting the expressions in \eqref{eq:FF2} and \eqref{eq:co1}  into the equation system \eqref{eq:cma}, the integral system can be reduced the following  equation system
\begin{equation}\label{eq:cma1}
  \left[
    \begin{array}{cccc}
        a_{11} & a_{12} & a_{13} & 0 \\
      a_{21}  & a_{22}  &a_{23} & 0 \\
      0 &  a_{32} & a_{33} & a_{34} \\
      0 &  a_{42} & a_{43} & a_{44} \\
    \end{array}
  \right]
\left[
  \begin{array}{c}
    \varphi_{1,n,m} \\
    \varphi_{2,n,m} \\
    \varphi_{3,n,m} \\
    \varphi_{4,n,m} \\
  \end{array}
\right]=
\left[
  \begin{array}{c}
    0 \\
    0 \\
    f_{1,n,m} j_{n0e} \\
    g_{1,n,m} \\
  \end{array}
\right],
\end{equation}
where
\[
 a_{11} =\frac{-\rmi \breve{k}_s r_i^2 j_{n1i} h_{n1i} }{ \breve{\mu} }, \quad a_{12} =\frac{-\rmi \hat{k}_s r_i^2 j_{n1i} h_{n2i}}{ \hat{\mu} }, \quad a_{13} =\frac{-\rmi \hat{k}_s r_e^2 j_{n1i} h_{n2e}}{ \hat{\mu} },
\]
\[
 a_{21} =-\rmi \breve{k}_s r_i  \acute{j}_{n1i} h_{n1i} , \quad a_{22} =-\rmi \hat{k}_s r_i  j_{n2i} \acute{h}_{n2i}, \quad a_{23} =-\rmi \hat{k}_s r_e  \acute{j}_{n2i} h_{n2e},
 \]
\[
 a_{32} =\frac{-\rmi \hat{k}_s r_i^2 j_{n2i} h_{n2e} }{ \hat{\mu} }, \quad a_{33} =\frac{-\rmi \hat{k}_s r_e^2 j_{n2e} h_{n2e}}{ \hat{\mu} }, \quad a_{34} =\frac{\rmi k_s r_e^2 j_{n0e} h_{n0e}}{\mu },
\]
\[
 a_{42} =-\rmi \hat{k}_s r_i  j_{n2i} \acute{h}_{n2e} , \quad a_{43} =-\rmi \hat{k}_s r_e  \acute{j}_{n2e}  h_{n2e}, \quad a_{44} =\rmi k_s r_e  j_{n0e} \acute{h}_{n0e},
 \]
 and
 \[
  g_{1,n,m}= f_{1,n,m}\mu  \left(k_s r_e  j_{n0e}^{\prime}- j_{n0e} \right)/r_e.
 \]
Solving the equation system \eqref{eq:cma1} gives that
\begin{equation}\label{eq:co2}
\begin{split}
\varphi_{1,nm}&=\frac{ \widetilde{\varphi}_{1,n,m} }{d_{n,m}},  \qquad  \varphi_{2,nm}=\frac{ \widetilde{\varphi}_{2,n,m} }{d_{n,m}},\\
\varphi_{3,nm}&=\frac{ \widetilde{\varphi}_{3,n,m} }{d_{n,m}},  \qquad  \varphi_{4,nm}=\frac{ \widetilde{\varphi}_{4,n,m} }{d_{n,m}},
\end{split}	
\end{equation}
where
\[
 \widetilde{\varphi}_{1,n,m} = \frac{- \rmi k \hat{k}^2 r_i r_e^2 f_{1,n,m} h_{n23}j_{n0e} j_{n2i} (\acute{h}_{n0e} j_{n0e} - \acute{j}_{n0e}h_{n0e} ) (\acute{h}_{n2i} j_{n2i} r_e - \acute{j}_{n2i}h_{n2i} r_i ) }{\hat{\mu}},
\]
\[
 \widetilde{\varphi}_{2,n,m} = \frac{ \rmi k \breve{k} \hat{k} r_i r_e^2 f_{1,n,m} h_{n1i} h_{n2e} j_{n0e} (\acute{h}_{n0e} j_{n0e} - \acute{j}_{n0e}h_{n0e} ) (\acute{j}_{n2i} j_{n1i} \hat{\mu} r_i - \acute{j}_{n1i}j_{n2i} \breve{\mu} r_e ) }{\breve{\mu}\hat{\mu}},
\]
\[
 \widetilde{\varphi}_{3,n,m} = \frac{ \rmi k \breve{k} \hat{k} r_i^3 r_e f_{1,n,m} h_{n1i} j_{n2i} j_{n0e} (\acute{h}_{n0e} j_{n0e} - \acute{j}_{n0e}h_{n0e} ) ( \acute{j}_{n1i}h_{n2i} \breve{\mu} - \acute{h}_{n2i} j_{n1i} \hat{\mu}  ) }{\breve{\mu}\hat{\mu}},
\]

\[
\begin{split}
 \widetilde{\varphi}_{4,n,m} = & \frac{ \rmi  \breve{k} \hat{k}^2 r_i^2 f_{1,n,m} \left(  \acute{j}_{n0e} \mu r_i \left(  j_{n1i} (\acute{h}_{n2i}  j_{n2e} r_e-\acute{j}_{n2i} h_{n2e}  r_i) \hat{\mu}  +  \acute{j}_{n1i} (h_{n2e} j_{n2i} - h_{n2i} j_{n2e}) \breve{\mu} r_e \right) \right) }{\breve{\mu}\hat{\mu}^2}\times \\
 &  \frac{ h_{n1i} h_{n2e} j_{n2i}  \left(  j_{n0e} \hat{\mu} r_e \left( \acute{j}_{n1i} (\acute{j}_{n2e}  h_{n2i} r_i - \acute{h}_{n2e} j_{n2i}  r_e) \breve{\mu}  +  j_{n1i} (\acute{h}_{n2e} \acute{j}_{n2i} - \acute{h}_{n2i} \acute{j}_{n2e}) \hat{\mu} r_i \right) \right) }{\breve{\mu}\hat{\mu}^2},
 \end{split}
\]
and

\[
\begin{split}
d_{n,m} = & \frac{k \breve{k} \hat{k}^2 r_i^2 r_e^2 \left(  \acute{h}_{n0e} \mu r_i \left(  j_{n1i} (\acute{h}_{n2i}  j_{n2e} r_e+\acute{j}_{n2i} h_{n2e}  r_i) \hat{\mu}  -  \acute{j}_{n1i} (h_{n2e} j_{n2i} - h_{n2i} j_{n2e}) \breve{\mu} r_e \right) \right) }{\mu \breve{\mu}\hat{\mu}^2}\times \\
 &  \frac{ h_{n1i} h_{n2e} j_{n0e} j_{n2i}  \left(  h_{n0e} \hat{\mu} r_e \left( \acute{j}_{n1i} (  \acute{h}_{n2e} j_{n2i}  r_e - \acute{j}_{n2e}  h_{n2i} r_i ) \breve{\mu}  +  j_{n1i} (\acute{h}_{n2i} \acute{j}_{n2e} - \acute{h}_{n2e} \acute{j}_{n2i}) \hat{\mu} r_i \right) \right) }{\mu \breve{\mu}\hat{\mu}^2}.
 \end{split}
\]
To simplify the exposition, we introduce the following two notations
\begin{equation}
\eta_{n2e}=n-1+n \grave{j}^{\prime}_{n2e} -\grave{j}_{n2e},
\end{equation}
and
\begin{equation}
\gamma_{n2e}=n+2+ (n+1) \grave{h}^{\prime}_{n2e} + \grave{h}_{n2e},
\end{equation}
where $\grave{j}^{\prime}_{n2e}$, $\grave{j}_{n2e}$, $\grave{h}^{\prime}_{n2e}$ and $\grave{h}_{n2e}$ are defined in \eqref{eq:a2}. The same notations also hod for $\eta_{n1i}$, $\eta_{n21}$, $\gamma_{n0e}$ and $\gamma_{n2i}$. We also define the following function
\begin{equation}\label{pa:q2}
\begin{split}
q_{2,n}(\breve{\mu}, \hat{\mu}, r_i, r_e) = & (\breve{\mu} +\hat{\mu} ) (\mu + \hat{\mu})n^2 r_e^2 +  ( \hat{\mu}r_i - \breve{\mu}r_e )(\mu r_i -\hat{\mu}r_e) n^2\rho^{2n} -       \\
& \hat{\mu}r_e (1+\grave{h}_{n0e}) \bigg(  \breve{\mu} r_e \eta_{n11} \Big( \gamma_{n2e} \rho^{2n}(1+\grave{j}_{n2i}) + (1+\grave{h}_{n2i})\eta_{n2e} \Big)-  \\
  & \hat{\mu} (1+\grave{j}_{n1i})\Big(  r_i \gamma_{n2e}\eta_{n2i}\rho^{2n} - r_e\gamma_{n2i}\eta_{n2e}  \Big)     \bigg) -\\
  &  \mu \gamma_{n0e} \bigg( \breve{\mu} r_e \eta_{n1i} \Big( r_e(1+\grave{h}_{n2i})(1+\grave{j}_{n2e}) -r_i\rho^{2n}(1+\grave{h}_{n2e})(1+\grave{j}_{n2i})   \Big)  \\
  &  \hat{\mu} (1+\grave{j}_{n1i})\Big( r_i^2 \rho^{2n} (1+\grave{h}_{n2e})\eta_{n2i} + r_e^2  (1+\grave{j}_{n2e})\gamma_{n2i}  \Big)  \bigg),
\end{split}
\end{equation}
here and also in what follows, $\rho=r_i/r_e$.

With the above preparation, we are in a position to show the CALR result, which is concluded in the following theorem.
\begin{thm}\label{thm:CALR}
Consider the configuration $(\mathbf{C}_0, \bff)$ where $\mathbf{C}_0$ is given in \eqref{eq:pa1}. Suppose that the Newtonian potential $\bF$ of the source term $\bff$ has the expression shown in \eqref{eq:FF2} with $f_{1,n_0,m}\neq0$ for some $n_0\in\mathbb{N}$.  For any $M\in\mathbb{R}_+$, if the parameters in $\mathbf{C}_0$ are chosen as follows
\begin{equation}\label{con:002}
\breve{\mu}=\mu, \quad \mbox{and} \quad \hat{\mu}=-\mu + \rmi\rho^{n_0} + p_{2,n_0}, 
\end{equation}	
such that
\begin{equation}\label{eq:con1}
 p_{2,n_0}^2 +q_{2,n_0}=\Ocal \left( \rho^{2n_0} \right) 
\end{equation}
and
\begin{equation}\label{eq:con2}
n_0 \left(1 + \tau_1  \frac{k^2r_e^3}{r_i}\right)^{n_0}>M,
\end{equation}
where $q_{2,n_0}$ is defined in \eqref{pa:q2} and $\tau_1\in\mathbb{R}_+$ is given in \eqref{eq:lll2},  then the phenomenon of the CALR could occur if the source supported inside the critical radius $r_*=\sqrt{r_e^3/r_i}$. Moreover, if the source is supported outside $B_{r_*}$, then there is no resonant result.
\end{thm}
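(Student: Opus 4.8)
The plan is to exploit the explicit modal solution $\varphi_{i,n,m}=\widetilde\varphi_{i,n,m}/d_{n,m}$ from \eqref{eq:co2} together with the spectral data in Theorems~\ref{thm:singleo} and \ref{thm:ks}, reducing both assertions to an asymptotic analysis in the mode index as $n\to\infty$. First I would compute the dissipation energy. Since each $\Tcal_n^m$ generates a divergence-free displacement (so that $\nabla\cdot\bu\equiv0$ in $\Omega\setminus\overline D$, exactly as in the proof of Theorem~\ref{thm:reson}), the $\hat\lambda$-term in \eqref{eq:func1} drops out and
\[
E(\bu)=\Im(\hat\mu)\sum_{n\ge N}\sum_{m=-n}^{n} e_{n,m},\qquad e_{n,m}=2\int_{\Omega\setminus\overline D}\big|\nabla^s\bu_{n,m}\big|^2\,d\bx\ge 0,
\]
where $\bu_{n,m}=\varphi_{2,n,m}\hat\bS^\omega_{\partial D}[\Tcal_n^m]+\varphi_{3,n,m}\hat\bS^\omega_{\partial\Omega}[\Tcal_n^m]$ restricted to the shell. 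Because $\hat\bS^\omega_{\partial D}[\Tcal_n^m]$ and $\hat\bS^\omega_{\partial\Omega}[\Tcal_n^m]$ are linearly independent on $\Omega\setminus\overline D$ (by Theorem~\ref{thm:singleo} and Proposition~\ref{pro:singlei} they carry the factors $h_n(\hat k_s|\bx|)$ and $j_n(\hat k_s|\bx|)$, respectively), $e_{n,m}$ is comparable, with explicit Bessel--Hankel weights, to $|\varphi_{2,n,m}|^2+|\varphi_{3,n,m}|^2$; hence it suffices to produce a single mode $n_0$ for which $\Im(\hat\mu)\,|\varphi_{3,n_0,m}|^2$ is arbitrarily large.

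The asymptotic core is as follows. Put $\breve\mu=\mu$ and substitute the large-$n$ expansions \eqref{eq:asj} for every spherical Bessel/Hankel function occurring in $d_{n,m}$ and in $\widetilde\varphi_{i,n,m}$. After cancelling the $(2n\pm1)!!$ factors and the matching powers of $r_i$ and $r_e$, one should obtain $d_{n_0,m}=\mathcal{C}_{n_0}\big(p_{2,n_0}^2+q_{2,n_0}+\mathcal{O}(\rho^{2n_0}/n_0)\big)$ with $\mathcal{C}_{n_0}\ne0$, the point being that the ``static'' leading part of $d_{n_0,m}$ vanishes at $\hat\mu=-\mu$ when $\breve\mu=\mu$ (the polariton resonance condition) and the first surviving correction is precisely the combination $q_{2,n_0}$ in \eqref{pa:q2}. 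The prescription \eqref{con:002}--\eqref{eq:con1} then forces $|d_{n_0,m}|=\mathcal{O}(|\mathcal{C}_{n_0}|\,\rho^{2n_0})$, while a parallel expansion of $\widetilde\varphi_{3,n_0,m}$ shows $|\widetilde\varphi_{3,n_0,m}|$ is of order $|\mathcal{C}_{n_0}|\,\rho^{\kappa n_0}|f_{1,n_0,m}|$ for an exponent $\kappa$ small enough that, combined with the geometric weight in $e_{n_0,m}$ and with $\Im(\hat\mu)=\rho^{n_0}$, one gets $E(\bu)\gtrsim n_0\big(1+\tau_1 k^2 r_e^3/r_i\big)^{n_0}$. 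Here ``source inside the critical radius $r_*=\sqrt{r_e^3/r_i}$'' is exactly what guarantees that this net exponent is favourable rather than absorbing the blow-up. Choosing $n_0$ large --- legitimate because \eqref{eq:FF2} ranges over arbitrarily high modes and we may take one with $f_{1,n_0,m}\ne0$ --- makes the right-hand side exceed $M$ by \eqref{eq:con2}, which is \eqref{con:res}.

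To upgrade resonance to CALR I would bound $\bu$ outside a fixed radius. In $\mathbb{R}^3\setminus\overline\Omega$ one has $\bu=\bS^\omega_{\partial\Omega}[\bvarphi_4]+\bF$, and $\bS^\omega_{\partial\Omega}[\Tcal_n^m](\bx)$ carries the decaying factor $h_n(k_s|\bx|)$ by Theorem~\ref{thm:singleo}; since after the tuning $|\varphi_{4,n,m}|$ grows at most like $\rho^{-2n}|f_{1,n,m}|$ up to bounded factors, the series $\sum_{n}|\varphi_{4,n,m}|^2\,|h_n(k_s|\bx|)|^2$ (up to the norm of $\Tcal_n^m$) converges and is bounded uniformly in $\hat\mu$ once $|\bx|>\tilde R$ for an explicit $\tilde R=\tilde R(r_i,r_e)$; together with the boundedness of $\bF$ off the source this yields \eqref{con:bou}, so CALR occurs. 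For the converse, if $\bff$ is supported in $\{|\by|>r_*\}$ then the coefficients satisfy $|f_{1,n,m}|^2\rho^{-\kappa n}\to0$ fast enough to be summable, so for any admissible $\hat\mu$ with $\Im\hat\mu>0$ the mode nearest resonance contributes at most $\lesssim|f_{1,n,m}|^2\rho^{-\kappa n}/\Im(\hat\mu)$ and the decay of the numerator absorbs the factor $1/\Im(\hat\mu)$; thus $E(\bu)=\Im(\hat\mu)\sum_{n}e_{n,m}$ stays bounded by a constant independent of $\hat\mu$, so \eqref{con:res} fails and no resonance occurs.

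I expect the main obstacle to be the uniform asymptotic analysis of $d_{n,m}$ and $\widetilde\varphi_{i,n,m}$ for large $n$: isolating the cancellation that produces exactly $q_{2,n_0}$, controlling all the $\grave j,\grave h$ corrections to order $\mathcal{O}(1/n_0)$, and tracking the $r_i,r_e$-powers precisely enough to certify that the threshold separating resonance from non-resonance is exactly $r_*=\sqrt{r_e^3/r_i}$ in both directions.
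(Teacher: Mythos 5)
Your plan follows the same structure as the paper's proof: reduce to the explicit modal coefficients $\varphi_{i,n,m}=\widetilde\varphi_{i,n,m}/d_{n,m}$ from \eqref{eq:co2}, use the asymptotic expansions \eqref{eq:asj} to show that the tuning \eqref{con:002}--\eqref{eq:con1} makes $|d_{n_0,m}|\approx\rho^{2n_0}$ while $|d_{n,m}|\gtrsim\rho^{2n}+\rho^{2n_0}$ for $n\neq n_0$, invoke the $\limsup$ characterisation of the source location to identify the critical radius, and bound $\bvarphi_4$ to get the exterior field estimate. The one genuine variation is the energy computation. The paper invokes Green's formula to rewrite $\Im P_{\hat\lambda,\hat\mu}(\bu,\bu)$ as $\Im\bigl(\int_{\partial\Omega}\partial_{\hat\bnu}\bu\,\overline\bu\,ds-\int_{\partial D}\partial_{\hat\bnu}\bu\,\overline\bu\,ds\bigr)$ (the volume term $-\omega^2\int|\bu|^2$ is real and drops out), and then reads off the lower bound from the boundary traces in Propositions \ref{pro:singlei} and \ref{pro:trac}. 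You instead factor $\Im P_{\hat\lambda,\hat\mu}(\bu,\bu)=\Im(\hat\mu)\,2\!\int|\nabla^s\bu|^2$, exactly as in the proof of Theorem~\ref{thm:reson}, and then need the bulk Dirichlet energy to be comparable, uniformly in $n$, to $|\varphi_{2,n,m}|^2+|\varphi_{3,n,m}|^2$. Both routes are legitimate; the Green's formula version avoids having to certify that the quadratic form in $(\varphi_2,\varphi_3)$ stays uniformly coercive as $n\to\infty$ (it does, since $j_n(\hat k_s r)$ and $h_n(\hat k_s r)$ separate at the two boundary spheres, but you would need to say a word about the $n$-dependence of the constants; this is a small gap in your sketch that the paper's route sidesteps).

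Two smaller remarks. First, your non-resonance paragraph asserts uniformity "for any admissible $\hat\mu$ with $\Im\hat\mu>0$"; the theorem only concerns the specific family \eqref{con:002} indexed by $n_0$, and the paper's estimate $E(\bu)\lesssim\sum_n\tilde f_{1,n,m}^2(k^2r_e^3/r_i)^{n_0}$ uses the tuned form of $d_{n,m}$ together with $\limsup_n(\tilde f_{1,n,m})^{1/n}\le 1/(kr_*+\tau_2)$; proving the stronger all-$\hat\mu$ statement is not needed and would require extra argument. Second, the exterior boundedness radius in the paper is $r_e^3/r_i^2$ (not $r_*$ itself), coming from the balance $|\varphi_{4,n,m}|\,|j_{n}(k_sr_e)|\,|h_n(k_s|\bx|)|\lesssim\tilde f_{1,n,m}\,(r_e^3/r_i^2)^n|\bx|^{-n}$; your sketch should make this explicit rather than leaving $\tilde R$ unspecified. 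Modulo these details, the proposal is a faithful reconstruction of the paper's argument.
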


\begin{proof}
We first show the polariton resonance, namely the condition \eqref{con:res}. For notational convenience of the proof, we set
\[
 \tilde{f}_{1,n,m}:=\frac{f_{1,n,m}}{(2n+1)!!}, \quad n\geq N.
\]
 When $N$ is large enough such that the spherical Bessel and Hankel functions, $j_n(t)$ and $h_n^{(1)}(t)$, enjoy the asymptotic expression shown in \eqref{eq:asj}, direct calculations show that the coefficients satisfy the following estimates
\begin{equation}
 |\widetilde{\varphi}_{2,n,m}|  \approx \frac{f_{1,n,m} (\hat{k}_s r_i )^n}{(2n+1)!!}, \qquad  |\widetilde{\varphi}_{3,n,m}|\approx\frac{f_{1,n,m} \rho^{n_0}  (\hat{k}_sr_e)^n }{(2n+1)!!},
\end{equation}
\begin{equation}\label{eq:e4n}
| \widetilde{\varphi}_{4,n,m}|\leq \frac{f_{1,n,m} (kr_e)^{n}}{(2n+1)!!}.
\end{equation}
Moreover, the condition \eqref{eq:con1} yields that when $n=n_0$,
\begin{equation}\label{eq:coed1}
|d_{n_0,m}| \approx  \rho^{2n_0},
\end{equation}
and when $n\neq n_0$,
\begin{equation}\label{eq:coed2}
|d_{n,m}| \geq \rho^{2n_0} + \rho^{2n}.
\end{equation}
Thus from \eqref{eq:sc1}, the displacement field $\bu$ to the system \eqref{eq:calr1} in the shell $\Omega\backslash\overline{D}$ can be represented as
\begin{equation}
\begin{split}
\bu&=    \hat{\bS}^\omega_{\partial D}[\bvarphi_2](\bx) + \hat{\bS}^\omega_{\partial\Omega}[\bvarphi_3](\bx) \\
                  &=   \sum_{n=N}^{\infty}\sum_{m=-n}^n -\frac{i \hat{k}_s}{\hat{\mu}} \Big( \varphi_{2,n,m} r_i^2 j_{n2i}h_n(\hat{k}_s|\bx|) + \varphi_{3,n,m} r_e^2 h_{n2e}j_n(\hat{k}_s|\bx|)  \Big)  \Tcal_n^m,
\end{split}
\end{equation}
where $\varphi_{2,n,m}$ and $\varphi_{3,n,m}$ are defined in \eqref{eq:co2}.

Next we give the estimate of the dissipation energy $E(\bu)$.  From the definition of the dissipation energy $E(\bu)$ in \eqref{def:E} and with the help of Green's formula, one can have the following estimate
\begin{equation}\label{eq:E2}
\begin{split}
 E(\bu) &=  \Im P_{\hat{\lambda},\hat{\mu}}(\bu, \bu) =  \Im\left(  \int_{\partial\Omega} \partial_{\hat{\bnu}} \bu \overline{\bu}ds  -   \int_{\partial D} \partial_{\hat{\bnu}} \bu \overline{\bu}ds \right) \geq \tilde{f}_{1,n_0,m}^2 \left( \frac{k^2r_e^3}{r_i} \right)^{n_0}
  \end{split}
\end{equation}
If the source $\bff$ is supported inside the critical radius $r_*=\sqrt{r_e^3/r_i}$, by \eqref{eq:FF2} and the asymptotic property of $j_n(t)$ in \eqref{eq:asj}, one can verify that there exists $\tau_1\in\mathbb{R}_+$ such that
\begin{equation}\label{eq:lll2}
 \limsup_{n\rightarrow\infty} (\tilde{f}_{1,n,m})^{1/n}=\sqrt{\frac{r_i}{k^2r_e^3}+\tau_1}.
\end{equation}
Combining  \eqref{eq:E2} as well as \eqref{eq:lll2} and together with the help of condition \eqref{eq:con2}, one can obtain that
\[
 E(\bu) \geq n_0 \left(\frac{r_i}{k^2r_e^3}+\tau_1\right)^{n_0} \left( \frac{k^2r_e^3}{r_i} \right)^{n_0}>M,
\]
which exactly shows that the polariton resonance occurs, namely the condition \eqref{con:res} is fulfilled.

Then we consider the case when the source is supported outside the critical radius $r_*$. Thus there exists $\tau_2>0$ such that
\[
 \limsup_{n\rightarrow\infty} (\tilde{f}_{1,n,m})^{1/n}\leq\frac{1}{kr_*+\tau_2},
\]
and the dissipation energy $E(\bu)$ can be estimated as follows
\[
 \begin{split}
   E(\bu) & \leq \sum_{n\geq N}  \frac{\tilde{f}_{1,n,m}^2 (kr_e)^{2n} \rho^{n_0} } {\rho^{2n_0}+\rho^{2n}} \leq \sum_{n\geq N} \tilde{f}_{1,n,m}^2 \left( \frac{k^2r_e^3}{r_i} \right)^{n_0}\leq C,
 \end{split}
\]
which means that resonance does not occur.

Next we prove the boundedness of the solution $\bu$ when $|x|>r_e^3/r_i^2$. From \eqref{eq:sc1}, \eqref{eq:co1} and \eqref{eq:co2}, the displacement field $\bu$ in $\mathbb{R}^3\backslash \overline{\Omega}$  can be represented as
\begin{equation}
\begin{split}
\bu=   \sum_{n=N}^{\infty}\sum_{m=-n}^n -\frac{\rmi k_s}{ \mu} \Big( \varphi_{4,n,m} r_e^2 j_{n0e}h_n(\hat{k}_s|\bx|)   \Big)  \Tcal_n^m + \bF(\bx),
\end{split}
\end{equation}
Moreover, from \eqref{eq:e4n}, \eqref{eq:coed1} and \eqref{eq:coed2}, one can obtain that
\begin{equation}\label{bound1}
 \begin{split}
   |\bu|  \leq  \sum_{n=N}^{\infty}\sum_{m=-n}^n |\tilde{f}_{1,n,m}|(kr_e)^{n_0} \left(\frac{r_e^3}{r_i^2}\right)^{n}\frac{1}{r^{n}} + |\bF|\leq C,
 \end{split}
\end{equation}
when $|x|>r_e^3/r_i^2$.

This completes the proof.
\end{proof}
\begin{rem}
Similar to Remark \ref{rem:con}, in Theorem \ref{thm:CALR}, we only require the constrain on the Lam\'e parameter $\hat{\mu}$ and there is no restrict on the Lam\'e parameter $\hat{\lambda}$, which indicates that only the first strong convexity condition in \eqref{eq:con} is broken.
\end{rem}
\begin{rem}\label{rem:genf}
In Theorem \ref{thm:CALR},the constrain on the source $\bff$, whose Newtonian potential $\bF$ should have the expression in \eqref{eq:FF2}, is just a technical issue. Indeed, the phenomenon of the CALR could occur for a general source term $\bff$. The reason we require $N$ in \eqref{eq:FF2} should be large is that we need to apply the asymptotic properties of the spherical Bessel and Hankel functions, $j_n(t)$ and $h_n(t)$ to prove the polariton resonance condition \eqref{con:res} and the boundedness condition \eqref{con:bou}. However, for the condition \eqref{con:res}, the ALR is a spectral phenomenon at the limit point of eigenvalues of the N-P operator, which naturally requires that the order $n$ should be large. While for the condition \eqref{con:bou}, if the item possessing the polariton resonance is bounded, then the other items are spontaneously bounded outside a certain region. Therefore the CALR could occur for a general source term $\bff$.
\end{rem}

\begin{rem}
We do the numerical simulation to show that the condition  \eqref{eq:con1} can be fulfilled. The parameters are chosen as follows
 \[
  n_0=50, \ \ \omega=5, \ \  r_i=0.8, \ \  r_e=1, \ \  \breve{\mu}=\mu=1 \ \  \mbox{and} \ \  (r_i/r_e)^{2n_0}\approx 2\times 10^{-10},
 \]
From the values of the parameters $\omega$ and $r_e$, one can readily verify that this is the case beyond quasi-static approximation. The norm of the LHS quantity in \eqref{eq:con1} in terms of the parameter $p_{2,n_0}$ is plotted in Fig.~ \ref{fig:CALR}, which apparently demonstrates that the condition \eqref{eq:con1} is satisfied.
\begin{figure}[t]
  \centering
 {\includegraphics[width=5cm]{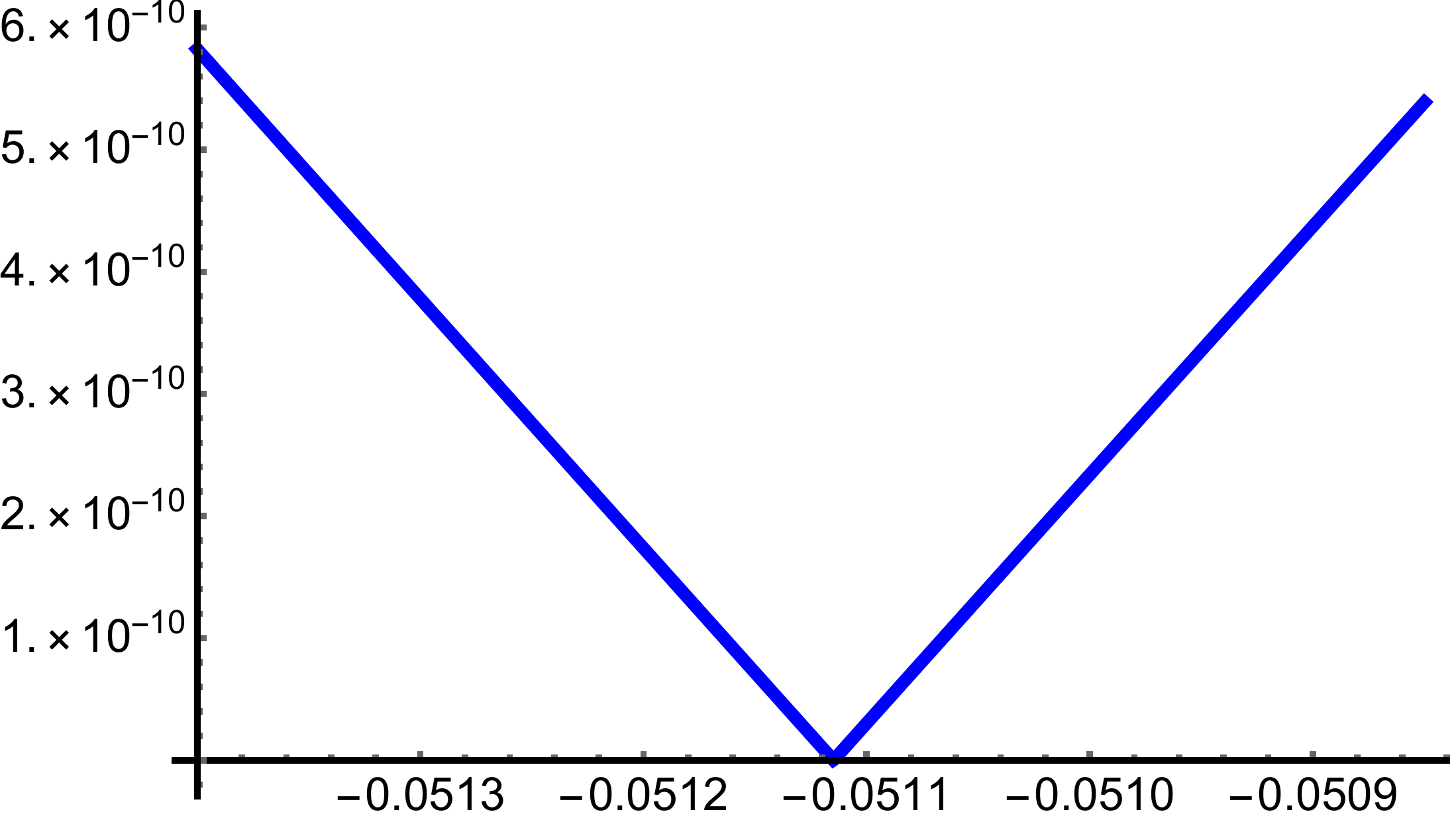}}
  \caption{The absolute value of the LHS quantity in \eqref{eq:con1} with respect the change of the parameter $p_2$. }
  \label{fig:CALR}
\end{figure}

\end{rem}


\begin{thebibliography}{99}

\bibitem{Ack14}
H.~Ammari, G.~Ciraolo, H.~Kang, H.~Lee, and G.W. Milton, \emph{Spectral theory of a Neumann-Poincar\'{e}-type operator and analysis of cloaking due to anomalous localized resonance {I}{I}},
  Contemporary Math., \textbf{615} (2014), 1--14.

\bibitem{Acm13}
H.~Ammari, G.~Ciraolo, H.~Kang, H.~Lee, and G.W. Milton, \emph{Anomalous
  localized resonance using a folded geometry in three dimensions,}, Proc. R.
  Soc. A, \textbf{469} (2013), 20130048.

\bibitem{Ack13}
H.~Ammari, G.~Ciraolo, H.~Kang, H.~Lee, and G.W. Milton, \emph{Spectral theory of a Neumann-Poincar\'{e}-type operator and
  analysis of cloaking due to anomalous localized resonance}, Arch. Ration.
  Mech. Anal., \textbf{208} (2013), 667--692.

\bibitem{ADM} {H.~Ammari, Y. Deng and P. Millien}, \emph{Surface plasmon resonance of nanoparticles and applications in imaging},
Arch. Ration. Mech. Anal., \textbf{220} (2016), 109--153.


\bibitem{AMRZ} {H.~Ammari, P. Millien, M. Ruiz and H. Zhang}, \emph{Mathematical analysis of plasmonic nanoparticles: the scalar case}, Arch. Ration. Mech. Anal., DOI: 10.1007/s00205-017-1084-5

\bibitem{ARYZ} {H.~Ammari, M. Ruiz, S. Yu and H. Zhang}, \emph{Mathematical analysis of plasmonic resonances for nanoparticles: the full Maxwell equations}, J. Differential Equations, {\bf 261} (2016), 3615--3669.

\bibitem{AK} K. Ando and H. Kang, {\it Analysis of plasmon resonance on smooth domains using spectral properties of the Neumann-Poincar\'e operator}, J. Math. Anal. Appl., \textbf{435} (2016), 162--178.

\bibitem{AJKKY15} K. Ando, Y. Ji, H. Kang, K. Kim and S. Yu, \emph{Spectral properties of the Neumann-Poincar\'e operator and cloaking by anomalous localized resonance for the elasto-static system}, arXiv:1510.00989.

\bibitem{AKKY16} K. Ando, H. Kang, K. Kim and S. Yu, \emph{Cloaking by anomalous localized resonance for linear elasticity on a coated structure}, arXiv:1612.08384.

\bibitem{AKL} {K. Ando, H. Kang and H. Liu}, \emph{Plasmon resonance with finite frequencies: a validation of the quasi-static approximation for diametrically small inclusions}, SIAM J. Appl. Math., \textbf{76} (2016), 731--749.

\bibitem{BL}
{O. Bruno and S. Lintner}, Superlens-cloaking of small dielectric bodies in the quasistatic regime, Journal of Applied Physics \textbf{102} (2007), no. 12.

\bibitem{BLL}
 {E. Bl{\aa}sten, H. Li, H. Liu and Y. Wang}, Localization and geometrization in plasmon resonances and geometric structures of Neumann-Poincar\'e eigenfunctions, arXiv:1809.08533.


\bibitem{Bos10}
G.~Bouchitt\'{e} and B.~Schweizer, \emph{Cloaking of small objects by anomalous localized resonance}, Quart. J. Mech. Appl. Math., \textbf{63} (2010), 438--463.

\bibitem{Brl07} O.P. Bruno and S.~Lintner, \emph{Superlens-cloaking of small dielectric bodies in the quasistatic regime}, J. Appl. Phys., \textbf{102} (2007), 124502.

%
 \bibitem{CK} {D.~Colton and R.~Kress}, {\it Inverse Acoustic and Electromagnetic Scattering Theory}, 2nd Edition, Springer-Verlag, Berlin, 1998.

 \bibitem{DLL} Y. Deng, H. Li and H. Liu, {\it On spectral properties of Neumann-Poincare operator and plasmonic cloaking in 3D elastostatics}, \textsf{J. Spectral Theory}, DOI:10.4171/JST/262.

\bibitem{DLL1} {Y. Deng, H. Li and H. Liu}, {Analysis of surface polariton resonance for nanoparticles in elastic system},  arXiv:1804.05480.

\bibitem{KLO} H. Kettunen, M. Lassas and P. Ola, {\it On absence and existence of the anomalous localized resonace without the quasi-static approximation}, preprint, arXiv: 1406.6224.

%
\bibitem{Klsap} {R.V. Kohn, J.Lu, B.~Schweizer and M.I. Weinstein}, \emph{A variational
  perspective on cloaking by anomalous localized resonance}, Comm. Math. Phys., {\bf 328} (2014), 1--27.

  \bibitem{Kup} V. D. Kupradze, {\it Three-dimensional Problems of the Mathematical Theory of Elasticity and Thermoelasticity}, Amsterdam, North-Holland, 1979.

%
\bibitem{LiLiu2d} {H. Li and H. Liu}, \emph{On anomalous localized resonance for the elastostatic system}, SIAM J. Math. Anal., {\bf 48} (2016), 3322--3344.

\bibitem{LiLiu3d} {H. Li and H. Liu}, \emph{On three-dimensional plasmon resonance in elastostatics},
Annali di Matematica Pura ed Applicata, doi:10.1007/s10231-016-0609-0.

\bibitem{s25} {H. Li and H. Liu}, {\it On anomalous localized resonance and plasmonic cloaking beyond the quasistatic limit}, {Proceedings of the Royal Society A}, 474: 20180165.

\bibitem{LLL1} {H. Li, J. Li and H. Liu}, {On novel elastic structures inducing polariton resonances with finite frequencies and cloaking due to anomalous localized resonance}, {Journal de Math\'ematiques Pures et Appliqu\'ees}, {\bf 120}(2018), pp 195--219.

\bibitem{LLL} {H. Li, J. Li and H. Liu}, \emph{On quasi-static cloaking due to anomalous localized resonance in $\mathbb{R}^3$}, SIAM J. Appl. Math., {\bf 75}  (2015), no. 3, 1245--1260.

\bibitem{LLLW} {H. Li, S. Li, H. Liu and X. Wang}, {Analysis of electromagnetic scattering from plasmonic inclusions at optical frequencies and applications},  ESAIM: Math. Model. Numer. Anal. , arXiv:1804.09517.

%
\bibitem{GWM1} {R.C. McPhedran, N.-A.P. Nicorovici, L.C. Botten and G.W. Milton}, \emph{Cloaking by plasmonic
resonance among systems of particles: cooperation or combat?} C.R. Phys., {\bf 10} (2009), 391--399.

%
\bibitem{GWM3} {G.W. Milton and N.-A.P. Nicorovici}, \emph{On the cloaking effects associated with anomalous
localized resonance}, Proc. R. Soc. A, {\bf 462} (2006), 3027--3059.

\bibitem{GWM4} {G.W. Milton, N.-A.P. Nicorovici, R.C. McPhedran, K. Cherednichenko and Z. Jacob},
\emph{Solutions in folded geometries, and associated cloaking due to anomalous resonance}, New. J. Phys., {\bf 10} (2008), 115021.

\bibitem{GWM5} {G.W. Milton, N.-A.P. Nicorovici, R.C. McPhedran, and V.A. Podolskiy}, \emph{ Proof of
superlensing in the quasistatic regime, and limitations of superlenses in this regime due
to anomalous localized resonance}, Proc. R. Soc. A, {\bf 461} (2005), 3999--4034.

\bibitem{Jcn}
{J.~C.~N\'ed\'elec}, {\it Acoustic and Electromagnetic Equations: Integral Representations for Harmonic Problems},
Springer-Verlag, New York, 2001.

\bibitem{GWM6} {N.-A.P. Nicorovici, R.C. McPhedran, S. Enoch and G. Tayeb}, \emph{Finite wavelength cloaking
by plasmonic resonance}, New. J. Phys., {\bf 10} (2008), 115020.

\bibitem{GWM7} {N.-A.P. Nicorovici, R.C. McPhedran and G.W. Milton}, \emph{Optical and dielectric properties
of partially resonant composites}, Phys. Rev. B, {\bf 49} (1994), 8479--8482.

\bibitem{GWM8} {N.-A.P. Nicorovici, G.W. Milton, R.C. McPhedran and L.C. Botten}, \emph{Quasistatic cloaking
of two-dimensional polarizable discrete systems by anomalous resonance}, Optics
Express, {\bf 15} (2007), 6314--6323.
%




\end{thebibliography}
\end{document}